\documentclass[aos, preprint]{imsart}
\setattribute{journal}{name}{} %suppress text Submitted to...

\RequirePackage[OT1]{fontenc}
\RequirePackage{amsthm,amsmath}
\RequirePackage[numbers]{natbib}
\RequirePackage[colorlinks,citecolor=blue,urlcolor=blue]{hyperref}

% settings
%\pubyear{2005}
%\volume{0}
%\issue{0}
%\firstpage{1}
%\lastpage{8}
%\arxiv{arXiv:0000.0000}

%\startlocaldefs
%\numberwithin{equation}{section}
%\theoremstyle{plain}
%\newtheorem{thm}{Theorem}[section]
%\endlocaldefs

\usepackage{amssymb,amsmath,amsthm}
\usepackage{graphicx}

\usepackage{afterpage}% http://ctan.org/pkg/afterpage
\usepackage{float}% http://ctan.org/pkg/float
\usepackage{lipsum}% http://ctan.org/pkg/lipsum

\usepackage{amsthm}
\usepackage{subfig}
\usepackage{booktabs}

\newtheorem{theorem}{Theorem}[section]
\newtheorem*{theorem*}{Theorem}
\newtheorem{lemma}{Lemma}
\newtheorem*{lemma*}{Lemma}
\newtheorem{corollary}{Corollary}
\newtheorem*{corollary*}{Corollary}

\newtheorem*{remark*}{Remark}

\begin{document}   

\begin{frontmatter}
%\title{Title \thanksref{T1}}
\title{Distributions and Statistical Power of Optimal Signal-Detection Methods In Finite Cases}
\runtitle{Distributions of Signal-Detection Methods In Finite Cases}
%\thankstext{T1}{Footnote to the title with the ``thankstext'' command.}

\begin{aug}
\author{\fnms{Hong} \snm{Zhang}\thanksref{t1,m1}\ead[label=e2]{hzhang@wpi.edu}},
\author{\fnms{Jiashun} \snm{Jin}\thanksref{m2}\ead[label=e3]{jiashun@stat.cmu.edu}}
\and
\author{\fnms{Zheyang} \snm{Wu}\thanksref{t1,t3, m1}
\ead[label=e1]{zheyangwu@wpi.edu}
\ead[label=u1,url]{https://www.wpi.edu/people/faculty/zheyangwu}}

\thankstext{t1}{Partially supported by NSF grant DMS-1309960. }
%\thankstext{t2}{Partially supported by ... }
\thankstext{t3}{Corresponding author.}
\runauthor{H. Zhang et al.}

\affiliation{Worcester Polytechnic Institute\thanksmark{m1} and Carnegie Mellon University\thanksmark{m2}}

\address{Department of Mathematical Sciences and\\
Program of Bioinformatics and Computational Biology and\\
Program of Data Science\\
100 Institute Road\\
Worcester, MA 01609\\
\printead{e1}\\
\printead{u1}
}

\end{aug}

\begin{abstract}
In big data analysis for detecting rare and weak signals among $n$ features, some grouping-test methods such as Higher Criticism test (HC), Berk-Jones test (B-J), and $\phi$-divergence test share the similar asymptotical optimality when $n \rightarrow \infty$.  However, in practical data analysis $n$ is frequently small and moderately large at most. In order to properly apply these optimal tests and wisely choose them for practical studies, it is important to know how to get the p-values and statistical power of them. To address this problem in an even broader context, this paper provides analytical solutions for a general family of goodness-of-fit (GOF) tests, which covers these optimal tests. For any given i.i.d. and continuous distributions of the input test statistics of the $n$ features, both p-value and statistical power of such a GOF test can be calculated. By calculation we compared the finite-sample performances of asymptotically optimal tests under the normal mixture alternative. Results show that HC is the best choice when signals are rare, while B-J is more robust over various signal patterns. In the application to a real genome-wide association study, results illustrate that the p-value calculation works well, and the optimal tests have potentials for detecting novel disease genes with weak genetic effects. The calculations have been implemented in an R package {\it SetTest} and published on the CRAN. 
\end{abstract}

%\begin{keyword}[class=MSC]
%\kwd[Primary ]{60K35}
%\kwd{60K35}
%\kwd[; secondary ]{60K35}
%\end{keyword}
%
%\begin{keyword}
%\kwd{sample}
%\kwd{\LaTeXe}
%\end{keyword}

\end{frontmatter}

\section{Introduction}

In big data analysis, signals are often buried within a large amount of noises and are thus relatively weak and rare. It is ideal to apply the \emph{optimal tests} that are capable of detecting the minimal signals required by statistics. Through asymptotics, many theoretical studies have brought exciting results on designing such optimal tests. In particular, under the Asymptotically Rare and Weak (ARW) setting, the Higher Criticism (HC) and its various modifications, the Berk-Jones  (B-J) type tests, a spectrum of $\phi$-divergence type tests, etc. were studied and proven asymptotically optimal \cite{Donoho2004, Donoho2008, donoho2014higher, jager2007goodness}. These optimal tests are attractive in many scientific researches. For example, in large-scale genetic association studies, a main strategy to find disease-associated genes is to determine whether some of the genetic variants within candidate genes could affect disease outcome. Such genetic effects are often weak and rare, especially relative to the cumulated noise level in big data. 

However, for practical applications under finite cases the questions remain on 1) how to analytically calculate p-values as well as statistical power, and 2) what are the real performances of those methods that are asymptotically equivalent.  First, to obtain the p-value for error control, the Monte-Carlo simulation or permutation tests have significant limitations: (A) they require daunting computation, and (B) empirical p-values are discrete, causing ties among candidates. These issues are especially concerned when very small p-values are demanded to handle a huge number of simultaneous tests. Secondly, for those optimal tests with the same asymptotic property under $n \rightarrow \infty$, it is important to understand their relative performance for finite $n$. In order to solve these problems we need to calculate the distributions of these tests under both $H_0$ and $H_1$ at each given $n$. Comparing with the literature, this paper gives a complete answer by providing a comprehensive calculation for a broad family of relevant tests.  

\subsection{Limitations of current methods}

In general there are two types of methods for distribution calculation. The first is to calculate the exact distribution. Recursive methods (e.g., Noe's recursion\cite{noe1972calculation, noe1968calculation}, Bolshev's recursion \cite{kotel1983computing, shorack2009empirical}, Steck's recursion \cite{steck1969smirnov, breth1976recurrence}, Ruben's recursion \cite{ruben1976evaluation}, etc.) were developed to calculate the distribution of Kolmogorov-Smirnov type statistics under $H_0$. In a similar fashion, Barnett and Lin \cite{barnett2014analytical} provided a calculation method specifically for HC. Such recursive methods have heavy computation load, with complexity of $O(n^3)$. Moscovich, Nadler and Spiegelman \cite{moscovich2016exact} reduced the computation to $O(n^2)$. However, all these methods assume the full domain $\mathcal{R} = \{ 1 \leq i \leq n\}$ in the supremum formula of the relevant tests (see the HC statistic in (\ref{equ: HC}) as an example). However, calculation that allows arbitrary  $\mathcal{R}$ is important, because many tests improve performance by properly restricting $\mathcal{R}$. For example, a modified version of HC is under $\mathcal{R} = \{ 1 < i \leq n/2, p_{(i)} \geq 1/n \}$. This is because that including too small p-values $p_{(i)}$ could make HC heavy tailed; while including too big p-values may unnecessarily reduces computational efficiency (c.f. \cite{Donoho2004}). Moreover, these methods did not give statistical power calculation under $H_1$ yet. 

%\afterpage{
 \begin{figure}
	\caption{Comparison among different methods for calculating the right-side probability of the modified HC test (MHC) in (\ref{equ: HC}) with  $\mathcal{R} = \{ 1 < i \leq n/2, p_{(i)} \geq 1/n \}$\cite{Donoho2004} under $H_0: X_i \overset{\text{i.i.d.}}{\sim} N(0, 1)$. Black solid curves: by simulation; red dashed curves: by Corollary \ref{Corol: exactmodified}; green dotted curves: by Li and Siegmund's method\cite{Li2014higher}\label{fig:comparison0}. }
	\begin{center}
		\subfloat{\includegraphics[width=0.5\textwidth,height=1.6in]{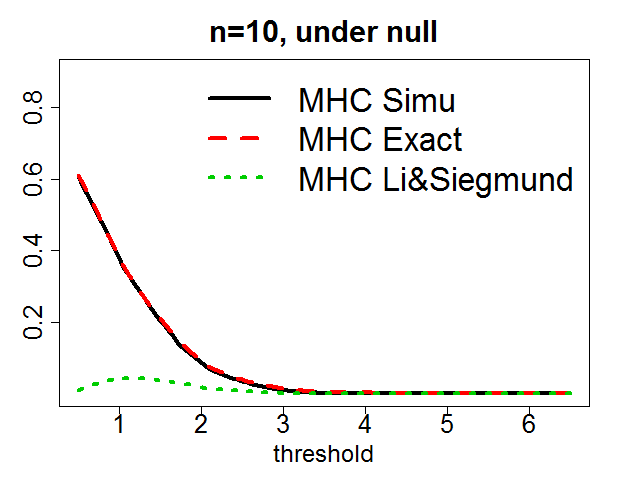}}
		\subfloat{\includegraphics[width=0.5\textwidth,height=1.6in]{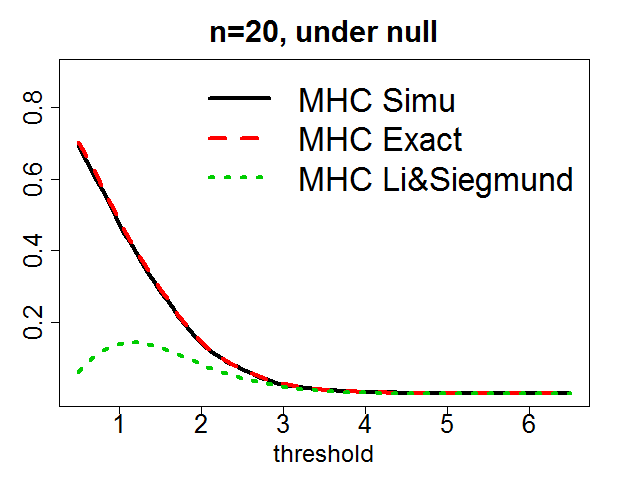}}\\
		\subfloat{\includegraphics[width=0.5\textwidth,height=1.6in]{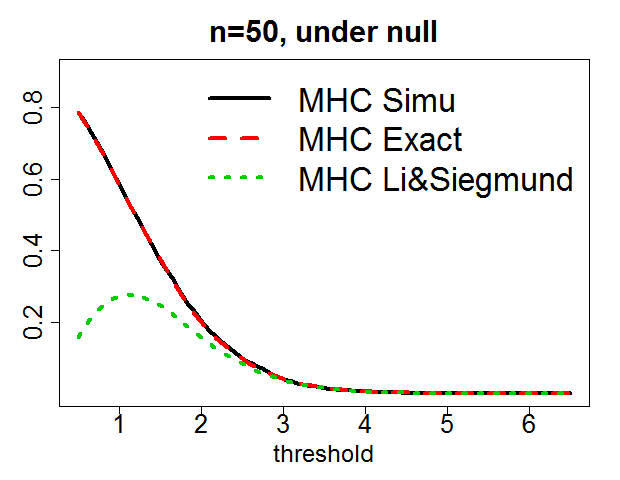}}
		\subfloat{\includegraphics[width=0.5\textwidth,height=1.6in]{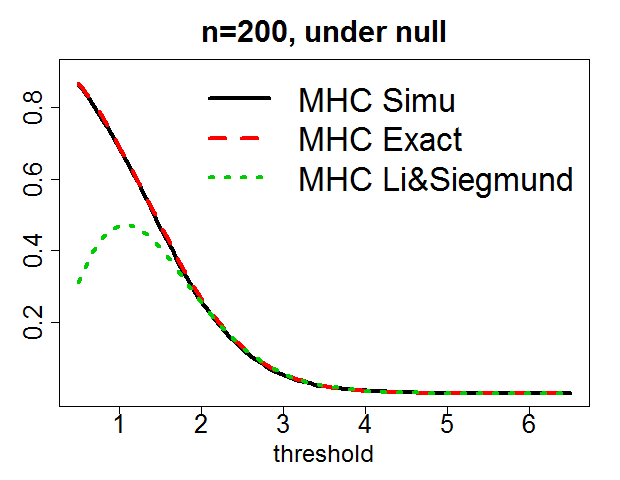}}\\
	\end{center}
 \end{figure}
% }
%\lipsum[4-6]

The second type methods are to approximate the distribution. Kolmogorov-Smirnov type statistics have been proven to converge in law to an extreme-value distribution \cite{eicker1979asymptotic,jaeschke1979asymptotic}. However, such convergence is too slow to be accurate for even moderately large $n$ \cite{barnett2014analytical}. Recently, Li and Siegmund (LS) \cite{Li2014higher} developed an asymptotic approximation method. One problems is that this method cannot approximate the whole distribution. Figure \ref{fig:comparison0} shows that the LS calculation (green dotted curves) fails to match the simulation (black solid curves) at small thresholds, whereas our calculation (e.g., the red dashed curves) reveal the whole distributions. Moreover, comparing with our study setting, LS assumes slightly more restriction for the supremum searching domain $\mathcal{R}$, and covers a narrower range of test types. Again, literature studies did not yet provide satisfiable method for deriving the relevant distributions under $H_1$ for statistical power calculation.

\subsection{Our contribution}

This paper has two folds of contributions. First, it provides techniques for calculating the distributions of a general family of goodness-of-fit (GOF) tests, which covers the optimal tests described above. We give calculation methods for the exact as well as the approximated distributions, balancing between accuracy and computation burden. The methods allow (A) arbitrary truncation strategies (e.g., $\mathcal{R}$ for HC in (\ref{equ: HC})), and (B) arbitrary null and alternative hypotheses as long as they are i.i.d. and continuous. With such techniques, p-values and statistical power can be calculated at any testing threshold. 

Second, based on analytical calculation we systematically compared the power of the asymptotically optimal tests. Simulations were applied for examing these tests under various $n$ values and signals patterns in practice. We also demonstrated the application of these weak-signal-sensitive tests in a real genome-wide association study (GWAS) for detecting genes associated to the Crohn's disease (see Figure \ref{fig:GWAS_Hexact}). 

\begin{figure}
	\caption{The association p-values for genes by exact calculation of four tests. First row: $HC^{2004}$ and Berk-Jones; second row: reverse Berk-Jones and $HC^{2008}$.}\label{fig:GWAS_Hexact}
	\begin{center}
		\subfloat{\includegraphics[width=0.5\textwidth,height=2in]{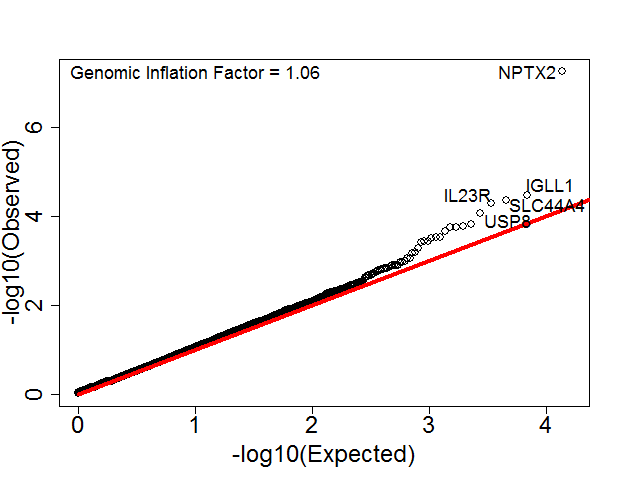}}
		\subfloat{\includegraphics[width=0.5\textwidth,height=2in]{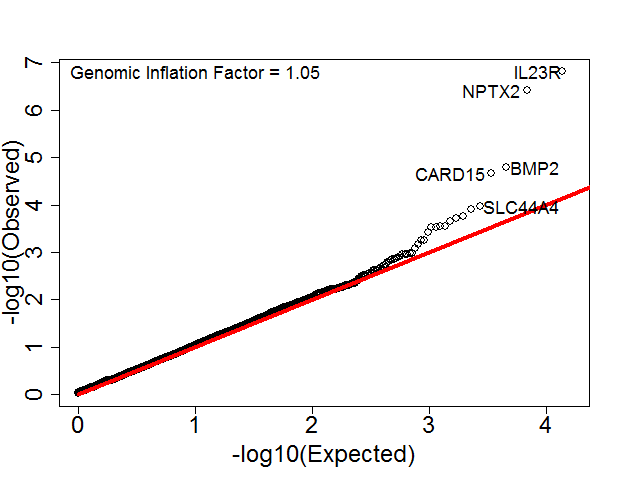}}\\
		\subfloat{\includegraphics[width=0.5\textwidth,height=2in]{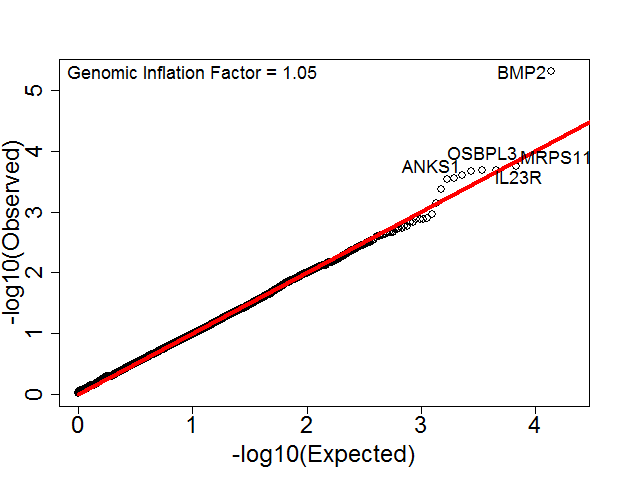}}
		\subfloat{\includegraphics[width=0.5\textwidth,height=2in]{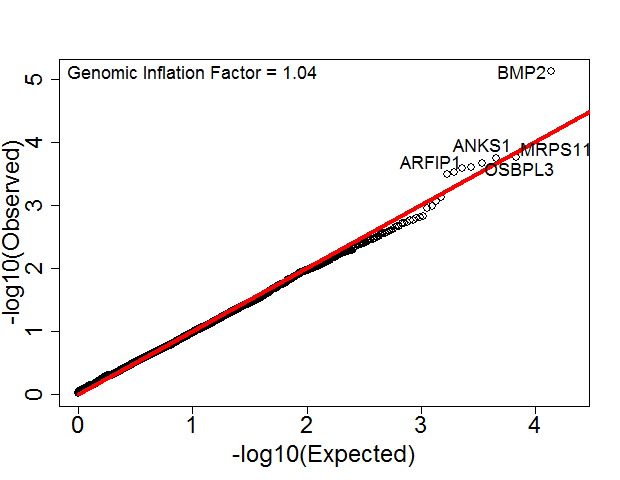}}
	\end{center}
\end{figure}

In summary, the innovative contribution of this work lies on providing a more mature statistical framework that revolves the problem of calculating distributions for a broader family of GOF tests under both $H_0$ and $H_1$. Thus, those asymptotically equivalent tests for detecting weak signals can be apply, as well as compared by power, in practical applications. The paper is organized as follows. In section \ref{Sect: Formulation} we formulate the problem under a general setup of hypotheses and GOF tests, and brief the essence of our strategy. The analytical results are presented in Section \ref{Sect: AnaResults} for both exact and approximated calculations. Through simulations Section \ref{Sect: NumResults} numerically evidences the calculation accuracy, and provides power comparisons among the asymptotically optimal tests. We show the application of the GOF tests in a real GWAS in Section \ref{Sect: GWAS}. In Section \ref{Sect: Discu} we discuss relevant theoretical and practical issues. The techniques of relevant proofs and lemmas are given in the Appendix.

\section{Problem Formulation and General Strategy\label{Sect: Formulation}}

\subsection{Background on Detection of Weak and Sparse Signals}

We consider the \emph{signal detection} problem through testing whether there exist ``signals" in noisy background. Under the broadly applicable Gaussian Means Model, signals refer to the nonzero means in the normally distributed data values (or statistics) $X_1,X_2,...,X_n$. A test is said of detecting signals if it successfully concludes the alternative when it is true. A typical setting for the null and the alternative is
\begin{equation}
	H_0: X_i \overset{\text{i.i.d.}}{\sim}  N(0, 1), \quad
	H_1: X_i \overset{\text{i.i.d.}}{\sim} \epsilon N(\mu, 1)  + (1-\epsilon)N(0, 1),
	\label{equ: mixNormalHypo}
\end{equation}
where $\epsilon \in (0, 1)$ denotes the expected proportion of signals. 

Under the Asymptotic Rare and Weak (ARW) setting, the proportion of signals is $\epsilon_n=n^{-\alpha}$, $\alpha \in (1/2, 1)$, and the mean is $\mu_n=\sqrt{2r\log(n)}$, $r \in (0, 1)$. Landmark studies \cite{Donoho2004, ingster1997some, ingster1998minimax} have provided the asymptotic \emph{detection boundary} in terms of a function curve of the signal strength and sparsity:
\begin{equation}
r=\rho^{\star}(\alpha) = 
	\left\{
		\begin{array}{lr}
			\alpha - 1/2      		& \quad 1/2 < \alpha \leq 3/4 \\
			(1-\sqrt{1-\alpha})^2 	& \quad 3/4 < \alpha <1.
		\end{array}
	\right.
	\label{equ: detectionBoundary}
\end{equation}
When the signals are below the curve, all tests will fail to distinguish $H_0$ and $H_1$ as $n \rightarrow \infty$. Whenever signals are above the curve, the so-called asymptotically optimal tests are able to make both the type I and the type II error rates converge to zero. A particular optimal test is the Higher Criticism (HC) test \cite{Tukey1976, Donoho2004}: 
\begin{equation}
HC_{n, \mathcal{R}} = \sup_{\mathcal{R}} \sqrt{n} \frac{i/n - p_{(i)}}{\sqrt{p_{(i)}(1-p_{(i)})}},
	\label{equ: HC}
\end{equation}
where $p_{(1)} \leq ... \leq p_{(n)}$ are the order statistics of $p_i=2(1-F_0(|X_i|)), i=1,...,n$, the two-sided p-values of $X_i$. The supremum domain $\mathcal{R}$ regards to the p-value magnitude, or the index $i$, or both. Note that in literature HC formula could also be written as (e.g., \cite{Arias2010, barnett2014analytical} )
\begin{equation}
	HC = \sup_{t \in \mathcal{R}^*} \frac{\sum_i\{ X_i > t\} - 2n\bar{\Phi}(t)}{\sqrt{2n\bar{\Phi}(t)\Phi(t)}}.
	\label{equ: HCstat2}
\end{equation}
Because the formula is monotone, the supremum domain $\mathcal{R}$ on $p_{(i)}$ is equivalent to the supremum domain $\mathcal{R}^*$ on $t$. 

Following that, a variety of modified Higher Criticism (HC), the Berk-Jones  (B-J) type tests, a spectrum of $\phi$-divergence type tests, etc. were studied and proven asymptotically optimal \cite{Donoho2004, Donoho2008, donoho2014higher, jager2007goodness}. In this paper, we provide calculation for the p-value and statistical power for relevant tests under finite $n$. 

\subsection{Study Formulation}

In this paper, we consider arbitrary i.i.d. and continuous null and alternative hypotheses:
\begin{equation}
	H_0: X_i \overset{\text{i.i.d.}}{\sim}  F_0, \quad
	H_a: X_i \overset{\text{i.i.d.}}{\sim}  F_1,  \space i = 1, ..., n.
	\label{equ: generalHypo}
\end{equation}
We consider the goodness-of-fit (GOF) tests that evidence the distinction between the data and any given null distribution.  Consider either one-sided p-values $p_i=1-F_0(X_i)$ or two-sided p-values $p_i=2(1-F_0(|X_i|))$.  When $F_0$ is continuous, the null hypothesis is equivalent to  
\begin{align*}
	&H_0: p_i \overset{\text{i.i.d.}}{\sim} \text{Uniform}(0, 1), \space i = 1, ..., n.
\end{align*}
The typical idea of GOF is that if any $X_i$ do not follow $F_0$, its $p_{(i)}$ shall distinct from it expectation, which is roughly $i/n$. Following this idea we consider a general family of tests, 
\begin{equation}
	S_{n, \mathcal{R}}=\sup_{\mathcal{R}} f(\frac{i}{n}, p_{(i)}),
	\label{equ: GOFstat}
\end{equation}
where the supremum domain 
\begin{equation}
	\mathcal{R}= \{i:  k_0 \leq i \leq k_1 \} \bigcap \{p_{(i)}:  \alpha_0 \leq p_{(i)} \leq \alpha_1 \}
	\label{equ: sup_domain}
\end{equation}
for given $k_0 \leq k_1 \in \{1, ..., n\}$ and $\alpha_0 \leq \alpha_1 \in [0, 1]$. We assume that for fixed $x=i/n$ the function $f(x, y)$ is monotonically decreasing in $y=p_{(i)}$, so that the smaller the input p-values, the larger the statistic and the stronger the evidence against $H_0$. 

This GOF family contains lots of test statistics widely applicable to practice. For example, the simple one-sided Kolmogorov-Smirnov test statistic (c.f. \cite{shaostat},  page 447, denoted $KS^+$ here) is a classic GOF test, which directly measures the difference between $p_{(i)}$ and $i/n$. That is, the $f$ function is defined by
\begin{equation}
	 f_{KS^+}(x,y)=x-y.
	\label{equ: KSstat}
\end{equation}

To improve the test, the difference between $p_{(i)}$ and $i/n$ should be scaled with regard to $p_{(i)}$ or $i/n$. This is because smaller $p_{(i)}$ values are more important to evidence agains $H_0$. Such scaled KS tests are related to the Higher Criticism (HC) statistics proposed in 2004 and 2008 \cite{Donoho2004, Donoho2008}, respectively, where the $f$ functions are defined as
\begin{equation}
	\begin{array}{ll}
	f_{HC^{2004}}(x,y) &=\sqrt{n}\displaystyle\frac{x-y}{\sqrt{y(1-y)}} \text {;          } 	\\
	f_{HC^{2008}}(x,y) &=\sqrt{n}\displaystyle\frac{x-y}{\sqrt{x(1-x)}}. 
	\end{array}
	\label{equ: HCstat}
\end{equation}

Jager and Wellner introduced a collection of $\phi$-divergence statistics \cite{jager2007goodness}, which are also based on the supremum of functions:
\begin{equation}
	\begin{array}{ll}
	f^{\phi}_s(x,y)&=\displaystyle\frac{1}{s(1-s)}(1-x^sy^{1-s}-(1-x)^s(1-y)^{1-s})\text{, } s\neq 0,1,\\
	f^{\phi}_1(x,y)&=x\log(\displaystyle\frac{x}{y})+(1-x)\log(\frac{1-x}{1-y}),\\
	f^{\phi}_0(x,y)&=y\log(\displaystyle\frac{y}{x})+(1-y)\log(\frac{1-y}{1-x}).
	\end{array}
	\label{equ: f_s}
\end{equation}
For certain $s$ (e.g., $s=2$ or $-1$) these statistics are two-sided in the sense that switching the values of $x=i/n$ and $y=p_{(i)}$ gives the same statistic. However, this property is not appropriate in the scenario of signal detection. It is the relationship $p_{(i)} < i/n$, instead of the opposite, indicates signals. Thus for signal detection purpose, one-sided test is more reasonable and more powerful, as is the same idea for defining KS and HC. Thus, here we consider the one-sided version of $\phi$-divergence statistics, which can be achieved by a simple adjustment of the $f$ function to be, for example,
\begin{equation}
f_s(x,y) =
	\left\{
		\begin{array}{lr}
			\sqrt{2nf^{\phi}_s(x,y)} & \quad y \leq x ,\\
			-\sqrt{2nf^{\phi}_s(x,y)} & \quad y > x.
		\end{array}
	\right.
	\label{equ: f_tilde}	
\end{equation}	
Now for all $s$, $f_s(x,y)$ is guaranteed decreasing in y.  Such one-sided $\phi$-divergence statistics cover HC exactly: $f_2=f_{HC^{2004}}$ and $f_{-1}=f_{HC^{2008}}$. Also, $s=1$ and $0$ correspond to the Berk-Jones statistic \cite{berk1979g, Donoho2004, Li2014higher} and the reverse Berk-Jones statistic \cite{berk1979g}, respectively.  Note that the input p-values themselves could be two-sided in order to accommodate the consideration of signal directionality in practical problems.

\subsection{Calculation Strategy}

In the following we first introduce the essential idea of the calculation. Under various settings and assumptions, detailed strategies for obtaining the exact and approximated distributions will be described in Section \ref{Sect: AnaResults}.  

For any given continuous CDFs in (\ref{equ: generalHypo}), we define a monotone transformation function in $(0, 1)$: 
\begin{equation}
	D(x) = \left\{
	\begin{array}{l l}
		x & \quad \text{under $H_0 : F_0$}, \\
		1-F_1(F_0^{-1}(1-x)) & \quad \text{under $H_1 : F_1 \neq F_0$}.
	\end{array} \right.
	\label{equ: D(x)}
\end{equation}
Note that for any p-value $p_i$, we have $D(p_i) \sim Uniform(0, 1)$ under either $H_0$ or $H_1$. 

Consider the function $f(x, y)$ of any statistic $S_{n, \mathcal{R}}$ in (\ref{equ: GOFstat}), for each fixed $x$ define its inverse function 
\begin{equation}
	g(x, \cdot)=f^{-1}(x,\cdot). 
	\label{equ: g(x)}
\end{equation}
For example, for the HC statistics defined (\ref{equ: HCstat}), the $g$ functions are
\begin{equation}
	\begin{array}{ll}
	g_{HC^{2004}}(x,b)=\frac{1}{1+b^2/n}[x+\frac{b^2/n-(b/\sqrt{n})\sqrt{b^2/n+4x(1-x)}}{2}] \text {;          } 	\\
	g_{HC^{2008}}(x,b)=x-(b/\sqrt{n})\sqrt{x(1-x)}. 
	\end{array}
	\label{equ:g2(x)}
\end{equation}
In general if the closed form of $g(x, \cdot)$ is not available, it can always be found numerically, since $f(x,y)$ is strictly decreasing in $y$.

Now under either $H_0$ or $H_1$, the CDF function of $S_{n, \mathcal{R}}$ is  
\begin{equation}
	\begin{array}{l l}	
	P(S_n\leq b)
	&=P(\displaystyle\sup_{\mathcal{R}}f(\frac{i}{n},p_{(i)})\leq b)\\
	&=P(\displaystyle\bigcap_{\mathcal{R}} \{p_{(i)}>g(\frac{i}{n}, b)\})\\
	&=P\{D(p_{(i)})> D(g(\frac{i}{n}, b)), \text{all } i \text{ and } p_{(i)} \text{ in } \mathcal{R} \}. 
	\end{array} 
	\label{equ: P(Sn<b)}
\end{equation}
The key idea is that under either $H_0$ or $H_1$, $U_{(i)}:=D(p_{(i)})$ is the $i^{th}$ order statistic of Uniform(0, 1), and the joint distribution of $U_{(i)}$, $i = k_0, ...., k_1$, can be studies one way or another for getting the final probability.  

To simplify the presentation, we list below the notations to be referred later on. 
\begin{itemize}
\item[\textit{(N1)}] $u_k := D(g(\frac{k}{n}, b)\vee \alpha_0)$, following the definitions in (\ref{equ: D(x)})--(\ref{equ: P(Sn<b)}), and a potential constant $\alpha_0 \geq 0$ in (\ref{equ: sup_domain}). 

\item[\textit{(N2)}] $\bar{F}_{B(\alpha, \beta)}(x)$ denotes the survival function of $Beta(\alpha, \beta)$ distribution.

\item[\textit{(N3)}]  $F_{\Gamma(\alpha)}(x)$ and $\bar{F}_{\Gamma(\alpha)}(x)$ denote the CDF and survival function of $Gamma(\alpha, 1)$ distribution, respectively, where the shape parameter is $\alpha$, the scale parameter is 1.

\item[\textit{(N4)}] Based on the notation \textit{(N3)} define
\[
h_k(x):=xF_{\Gamma(k-1)}(kx)-F_{\Gamma(k)}(kx).
\] 

\item[\textit{(N5)}] $f_{P(\lambda)}(x)$ denotes the probability mass function of $Poisson(\lambda)$ distribution.

\end{itemize}

\section{Analytical Results\label{Sect: AnaResults}}

\subsection{Exact Calculations\label{Sect: ApproxDistn}}

The first theorem provides the exact calculation for the distribution of GOF in (\ref{equ: GOFstat}), where the supremum domain $\mathcal{R}$  involves truncation of the index $i$. For example, the initial HC was defined with $\mathcal{R} = \{ 1 \leq i \leq n/2\}$ \cite{Donoho2004}. 

\begin{theorem}
\label{Thm: exact} 
	Consider any GOF statistic in (\ref{equ: GOFstat}) with $\mathcal{R} = \{ k_0 \leq i \leq k_1\}$ for given $1 \leq k_0 \leq k_1 \leq n$. Let $m=n-k_1+1$. Following the notations (N1), (N2), and  
\[
	\begin{array}{l l}	
		a_{k_1}=\frac{n!}{(n-k_1+1)!}\bar{F}_{B(1, m)}(u_{k_1}) \text{, and } \\
		a_k=\displaystyle \frac{n!}{(n-k+1)!}\bar{F}_{B(k_1-k+1,m)}(u_{k_1}) - \sum_{j=1}^{k_1-k}\frac{u_{k+j-1}^{j}}{j!}a_{k+j}, \quad k=k_1-1, ..., 1.
	\end{array}
\]
Under either $H_0$ or $H_1$ we have 
\[
	P(S_n \leq b)= \bar{F}_{B(k_1,m)}(u_{k_1}) - \sum_{i=k_0}^{k_1-1}\frac{u_i^i}{i!}a_{i+1}. 
\]
\end{theorem}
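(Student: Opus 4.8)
The plan is to first strip away all distributional content, reducing the statement to a boundary‑crossing probability for uniform order statistics, and then to prove the two displayed identities together by induction on the number of active constraints $k_1-k_0+1$. For the reduction: because $f(x,\cdot)$ is strictly decreasing with inverse $g(x,\cdot)$ (see (\ref{equ: g(x)})), the event $\{S_n\le b\}$ is the event $\bigcap_{i=k_0}^{k_1}\{p_{(i)}>g(i/n,b)\vee\alpha_0\}$, just as in (\ref{equ: P(Sn<b)}); applying the monotone transform $D$ of (\ref{equ: D(x)}) and recalling that $U_{(i)}:=D(p_{(i)})$ is, under $H_0$ or $H_1$ alike, the $i$-th order statistic of $n$ i.i.d.\ $\mathrm{Uniform}(0,1)$ variables, this becomes $P(S_n\le b)=P\big(U_{(i)}>u_i,\ i=k_0,\dots,k_1\big)$ with $u_i$ as in (N1). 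So it suffices to evaluate this purely combinatorial probability, which no longer refers to $F_0$ or $F_1$.

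Next I would pin down the meaning of the auxiliary quantities: for $k_0<k\le k_1$ I would show that $a_k$ equals $\tfrac{n!}{(n-k+1)!}$ times $P\big(W_{(j)}>u_{k+j-1},\ j=1,\dots,k_1-k+1\big)$, where $W_{(1)}\le\cdots\le W_{(n-k+1)}$ are the order statistics of $n-k+1$ i.i.d.\ uniforms. With that reading, the recursion defining $a_k$ is just the theorem's main formula applied to this smaller $(n-k+1)$-point problem (with $k_0$ replaced by $1$ and $k_1$ by $k_1-k+1$; one checks $m$ is unchanged), multiplied through by $\tfrac{n!}{(n-k+1)!}$ and with the companion prefactors of $a_{k+1},\dots,a_{k_1}$ absorbed. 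Thus the recursion and the main formula are one and the same identity at two scales, and both follow simultaneously from the induction below.

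For the inductive step I would peel off the lowest constraint. The base case $k_0=k_1$ is just $U_{(k_1)}\sim\mathrm{Beta}(k_1,m)$, i.e.\ $\bar F_{B(k_1,m)}(u_{k_1})$. For the step, split on whether $U_{(k_0)}>u_{k_0}$:
\[
	P\big(U_{(i)}>u_i,\ k_0\le i\le k_1\big)=P\big(U_{(i)}>u_i,\ k_0{+}1\le i\le k_1\big)-P\big(U_{(k_0)}\le u_{k_0},\ U_{(i)}>u_i,\ k_0{+}1\le i\le k_1\big).
\]
The first term is the induction hypothesis with one fewer constraint (note the $a_k$ recursion does not involve $k_0$), so it equals $\bar F_{B(k_1,m)}(u_{k_1})-\sum_{i=k_0+1}^{k_1-1}\tfrac{u_i^i}{i!}a_{i+1}$. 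For the second term I would condition on the number of the $n$ points falling below $u_{k_0}$: when the effective boundary $i\mapsto u_i$ is non-decreasing over $\{k_0,\dots,k_1\}$ this number must be exactly $k_0$, contributing a factor $\binom{n}{k_0}u_{k_0}^{k_0}$, while the remaining $n-k_0$ points are i.i.d.\ uniform and must have their order statistics exceed $u_{k_0+1},\dots,u_{k_1}$ at positions $1,\dots,k_1-k_0$ --- exactly the $(n-k_0)$-point sub-problem, equal to $\tfrac{(n-k_0)!}{n!}a_{k_0+1}$ by the previous paragraph. Hence the second term is $\tfrac{u_{k_0}^{k_0}}{k_0!}a_{k_0+1}$, and subtracting it from the first yields $\bar F_{B(k_1,m)}(u_{k_1})-\sum_{i=k_0}^{k_1-1}\tfrac{u_i^i}{i!}a_{i+1}$, closing the induction.

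The delicate parts are two. One is the bookkeeping in the ``interpreting the $a_k$'' step: confirming that the nested $(n-k+1)$-point sub-problems genuinely carry the same $m$ and that the combinatorial prefactors $\tfrac{n!}{(n-k+1)!}$ telescope correctly through the recursion. The other, and the real obstacle, is the conditioning step when $i\mapsto u_i$ fails to be monotone (which can happen, e.g.\ for $g_{HC^{2008}}$ near the origin): there one must sum over all admissible numbers of order statistics lying below $u_{k_0}$ and verify that the extra contributions either vanish or fold back into the same recursion, or else argue that the relevant truncation $\mathcal{R}$ rules the non‑monotone situation out. Everything else is routine algebra with Beta and binomial factors.
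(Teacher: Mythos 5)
Your argument is correct and reaches the paper's formula by a genuinely different, more probabilistic route. After the common reduction to $P(U_{(i)}>u_i,\ k_0\le i\le k_1)$ (the paper's display (\ref{equ: P(Sn<b)})), the paper evaluates this probability directly: it integrates the joint density of $(U_{(k_0)},\dots,U_{(k_1)})$ from Lemma \ref{Lemma: jointBeta} and unwinds the innermost integral repeatedly; there $a_k$ is \emph{defined} as the nested integral starting at index $k$, which is exactly (up to the factor $n!/(n-k+1)!$) the sub-sample noncrossing probability you attribute to it, and the recursion drops out of that integral expansion in one pass. You instead run an induction on the number of active constraints and replace the integral manipulations by the counting step in which exactly $k_0$ of the $n$ points fall below $u_{k_0}$. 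That step is right, but your justification is slightly loose: conditionally on exactly $k_0$ points lying below $u_{k_0}$, the remaining $n-k_0$ points are uniform on $(u_{k_0},1)$, not on $(0,1)$; the needed identity $P\bigl(U_{(k_0)}\le u_{k_0},\ U_{(i)}>u_i,\ k_0<i\le k_1\bigr)=\frac{u_{k_0}^{k_0}}{k_0!}a_{k_0+1}$ still holds because, for each fixed choice of the $k_0$ low points, independence yields the factor $u_{k_0}^{k_0}$ and the requirement that the other $n-k_0$ points exceed $u_{k_0}$ is already implied by their first order statistic exceeding $u_{k_0+1}\ge u_{k_0}$ when the boundary is nondecreasing; likewise your two-scale identification of the $a_k$'s (same $m=n-k_1+1$, telescoping factorials) does check out. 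The paper's route buys a derivation that needs no probabilistic reading of $a_k$ and no cross-scale consistency check; yours buys transparency about what the $a_k$ mean and why the computation is a backward substitution. Finally, the monotonicity issue you flag as the real obstacle is not a defect of your proof relative to the paper's: the iterated-integral representation in the paper equally requires $u_k$ nondecreasing on $\{k_0,\dots,k_1\}$ (otherwise an inner lower limit can exceed the outer variable and the signed integral miscounts), and in both arguments one may assume this without loss of generality by replacing $u_k$ with the running maximum $\max_{k_0\le j\le k}u_j$, which leaves the event $\bigcap_i\{U_{(i)}>u_i\}$ unchanged since $U_{(i)}$ is nondecreasing in $i$; for the statistics actually used (e.g.\ the $g$ functions in (\ref{equ:g2(x)})) the boundary is increasing anyway.
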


Another type of truncation is based on the value of $p_{(i)}$. In the case of HC under the null of $N(0,1)$, this truncation is equivalent to the truncation on $t$ in (\ref{equ: HCstat2}). The following theorem gives the exact calculation for the general GOF tests in (\ref{equ: GOFstat}) with such truncations.  

\begin{theorem}
\label{Thm: exacttruncate} 
	Consider any GOF with statistic in (\ref{equ: GOFstat}) with $\mathcal{R} = \{ \alpha_0 \leq p_{(i)} \leq \alpha_1\}$ for given $0 \leq \alpha_0 < \alpha_1 \leq 1$. Following the notations (N1) and (N2), define  
\[
	\begin{array}{l l}	
		\beta_0 = D(\alpha_0), \quad{} \beta_1 = D(\alpha_1), \quad{}
		 c_{ij} = \frac{\beta_0^{i-1}(1-\beta_1)^{n-j+1}}{(i-1)!(n-j+1)!}\\
		a_{j}(k)=\displaystyle \frac{n!}{(j-k)!}\beta_1^{j-k}\bar{F}_{B(j-k,1)}(\frac{u_{j-1}}{\beta_1}) - \sum_{l=1}^{j-k}\frac{u_{k+l-1}^{l}}{l!}a_{j}(k+l), \text{ and } \\
		a_{j}(j)=0,\quad{} 1\leq i\leq k_1,i<j\leq n+1, k=1,...,j-1
	\end{array}
\]
Under either $H_0$ or $H_1$, we have
	\begin{align*}
		P(S_{n,\mathcal{R}} \leq b) = \sum_{i=1}^{k_1}\sum_{j=i+1}^{n+1}c_{ij}a_j(i)
	\end{align*}
\end{theorem}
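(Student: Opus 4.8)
My plan is to run the reduction from Section~2.3 and then evaluate the resulting probability over uniform order statistics by a downward recursion. By~(\ref{equ: P(Sn<b)}), under either hypothesis the transformed order statistics $U_{(1)}\le\cdots\le U_{(n)}$ with $U_{(i)}:=D(p_{(i)})$ are those of $n$ i.i.d.\ $\mathrm{Uniform}(0,1)$ variables, and $\{S_{n,\mathcal R}\le b\}$ is the event that $U_{(i)}>u_i$ for every index $i$ with $U_{(i)}\in[\beta_0,\beta_1]$, where $\beta_0=D(\alpha_0)$, $\beta_1=D(\alpha_1)$ and $u_i$ is as in~(N1). The $\vee\,\alpha_0$ inside~(N1) is exactly what merges the two lower bounds ``$p_{(i)}\ge\alpha_0$'' and ``$p_{(i)}>g(i/n,b)$''; it also guarantees $u_i\ge\beta_0$, so the band's lower constraint is automatically implied by the barrier. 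The first step is then to condition on where the sample falls relative to the band: let $i-1$ be the number of $U$'s below $\beta_0$ and $n-j+1$ the number above $\beta_1$, so that $U_{(1)}<\cdots<U_{(i-1)}<\beta_0\le U_{(i)}\le\cdots\le U_{(j-1)}\le\beta_1<U_{(j)}<\cdots<U_{(n)}$, and the only active constraints become $U_{(k)}>u_k$ for $k=i,\dots,j-1$.

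Second, I would integrate the joint density $n!\,\mathbf 1\{0<w_1<\cdots<w_n<1\}$ of the order statistics over the two outer blocks: the block below $\beta_0$ integrates to $\beta_0^{\,i-1}/(i-1)!$ and the block above $\beta_1$ to $(1-\beta_1)^{\,n-j+1}/(n-j+1)!$. These two factors assemble into $c_{ij}$, and what remains is $n!$ times the volume $I_{ij}:=\mathrm{vol}\{\beta_0\le w_i\le\cdots\le w_{j-1}\le\beta_1:\ w_k\ge u_k\ \forall k\}$; since $u_k\ge\beta_0$, this is the volume of an ordered tuple in $[0,\beta_1]^{\,j-i}$ cut by the lower barriers $u_i,\dots,u_{j-1}$. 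Summing over admissible $(i,j)$ — with $i$ ranging to $k_1$, past which the $j$-term vanishes through the Beta survival factor below — gives $P(S_{n,\mathcal R}\le b)=\sum_i\sum_j c_{ij}\,a_j(i)$ once one sets $a_j(i):=n!\,I_{ij}$.

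Third, I would establish the stated recursion for $a_j(k):=n!\cdot\mathrm{vol}\{0\le w_k\le\cdots\le w_{j-1}\le\beta_1:\ w_l\ge u_l\ \forall l\}$ by inclusion--exclusion on the \emph{largest} index at which a barrier fails. Keeping only the ordering and the top barrier $u_{j-1}$ leaves volume $(\beta_1^{\,j-k}-u_{j-1}^{\,j-k})/(j-k)!$; multiplying by $n!$ and writing $\bar F_{B(j-k,1)}(x)=1-x^{j-k}$ reproduces the leading term $\tfrac{n!}{(j-k)!}\beta_1^{\,j-k}\bar F_{B(j-k,1)}(u_{j-1}/\beta_1)$, which conveniently becomes $0$ once $u_{j-1}\ge\beta_1$ and then forces $a_j(k)=0$ down the recursion. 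From it I subtract the configurations in which $k+l-1$ is the largest index with $w_{k+l-1}<u_{k+l-1}$: the ordered prefix $w_k\le\cdots\le w_{k+l-1}<u_{k+l-1}$ contributes $u_{k+l-1}^{\,l}/l!$, and the ordered suffix $w_{k+l},\dots,w_{j-1}$ — which must dominate $w_{k+l-1}$ and satisfy its own barriers — contributes $a_j(k+l)/n!$, giving the term $\tfrac{u_{k+l-1}^{\,l}}{l!}a_j(k+l)$, with $a_j(j)=0$ annihilating the $l=j-k$ term. Solving this triangular system downward from $a_j(j)=0$ yields the closed recursion in the statement, and plugging $a_j(i)$ back into the $(i,j)$ sum finishes the proof; this mirrors the structure of the proof of Theorem~\ref{Thm: exact}, with Beta replacing Gamma.

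The delicate step is the factorization in the third paragraph: the suffix variables are constrained both by their own barriers and by $w_{k+l}\ge w_{k+l-1}$, so the clean split into $\tfrac{u_{k+l-1}^{l}}{l!}\cdot\tfrac{a_j(k+l)}{n!}$ is valid only when $w_{k+l}\ge u_{k+l}$ already forces $w_{k+l}\ge w_{k+l-1}$, i.e.\ when the barriers $u_k$ are nondecreasing in $k$ (equivalently $g(\cdot,b)$ nondecreasing after the $\vee\,\alpha_0$ truncation). I would either verify this monotonicity for the GOF statistics of interest or, to be safe, replace each $u_k$ by the prefix maximum $u_i\vee\cdots\vee u_k$ — which leaves $I_{ij}$ unchanged because the ordering already enforces it — and run the argument with those. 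Everything else is routine bookkeeping: the index ranges, the off-by-one in the $(i,j)$ partition when the band contains no sample point (vacuous when $\alpha_1=1$ so $\beta_1=1$, which is the case in the HC applications that motivate this theorem), and the small base case $j=i+1$.
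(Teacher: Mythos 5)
Your argument is correct, and its skeleton is the paper's: you condition on which order statistics land in the band $[\beta_0,\beta_1]$ (the paper's ``exactly $p_{(i)},\dots,p_{(j-1)}$ fall in $[\alpha_0,\alpha_1]$'' decomposition), integrate out the two outer blocks to produce $c_{ij}$, and reduce to the ordered-volume quantity $a_j(i)$, exactly as in the paper. Where you differ is in how the recursion for $a_j(k)$ is obtained: the paper evaluates the iterated integral directly by peeling off the innermost variable (the same algebraic expansion as in the proof of Theorem \ref{Thm: exact}), whereas you derive the same triangular system combinatorially, by inclusion--exclusion on the largest index whose barrier fails. The two derivations are equivalent, but yours makes explicit a hypothesis the paper leaves tacit: your factorization of prefix times suffix needs the barriers $u_k$ to be nondecreasing, and in fact the paper's own representation of the probability as an iterated integral with limits $\int_{u_{j-1}}^{\beta_1}\cdots\int_{u_i}^{x_{i+1}}$ also requires this (for non-monotone $u_k$ the naive iterated integral picks up signed contributions and no longer equals the probability, as a two-variable example with $u_1>u_2$ shows). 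Your fix---replacing $u_k$ by the running maximum, which changes neither the event $\{U_{(k)}>u_k,\ \forall k\}$ nor the volume $I_{ij}$---is the right way to make the statement valid for an arbitrary member of the GOF family, and it reduces to the stated formula whenever $g(\cdot,b)$ (after the $\vee\,\alpha_0$ truncation) is nondecreasing, which holds for the statistics of interest. Two minor bookkeeping points you flag are indeed quirks of the paper rather than of your proof: in this theorem $k_1$ should be read as $n$ (as the paper's own remark after Corollary \ref{Corol: exactmodified} indicates), and the $(i,j)$ sum omits the zero-probability-of-interest case of an empty band, which is only exactly vacuous when $\beta_1=1$.
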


The most general $\mathcal{R}$ is in (\ref{equ: sup_domain}), which defines the truncation for both the index and the p-values. The following theorem provides the calculation for exact distribution under this general setup, for which no literature has provided solution before. 
\begin{theorem}
\label{Thm: exactgeneral} 
	Consider any GOF with statistic in (\ref{equ: GOFstat})with $\mathcal{R} = \{ \alpha_0 \leq p_{(i)} \leq \alpha_1\} \cap \{ k_0 \leq i \leq k_1\}$ for given $1 \leq k_0 \leq k_1 \leq n$ and  $0 \leq \alpha_0 < \alpha_1 \leq 1$. Following the notations in Theorem \ref{Thm: exacttruncate} and (N1) and (N2), define  
\[
	\begin{array}{l l}	
		\tilde{i}=
		\begin{cases} 
    		 	 i & i\geq k_0 \\
    		 	 k_0 & i<k_0 
		\end{cases}
		,\quad{}
		\tilde{j}=
		\begin{cases} 
			j & j\leq k_1+1 \\
			k_1+1 & j>k_1+1 
		\end{cases}
		, \quad{}
		\tilde{\beta}_0 = \beta_0I_{\{i<k_0\}}, \\
		a_{j}(k)=\displaystyle \frac{n!}{(j-k)!}\beta_1^{(j-k)}\bar{F}_{B(\tilde{j}-k,j-\tilde{j}+1)}(\frac{u_{\tilde{j}-1}}{\beta_1}) - \sum_{l=1}^{\tilde{j}-k}\frac{u_{k+l-1}^{l}}{l!}a_{j}(k+l), \text{ and } \\
		a_{j}(\tilde{j})=0,\quad{} 1\leq i\leq k_1,\tilde{i}<j\leq n+1, k=1,...,\tilde{j}-1.
	\end{array}
\]
Under either $H_0$ or $H_1$, we have
	\begin{align*}
		&P(S_{n,\mathcal{R}} \leq b)\\
		= &\sum_{i=1}^{k_1}\sum_{j=\tilde{i}+1}^{n+1}c_{ij}\left( \frac{n!(\beta_1-\tilde{\beta}_0)^{j-i}}{(j-i)!}\bar{F}_{B(\tilde{j}-i,j-\tilde{j}+1)}(\frac{u_{\tilde{j}-1}-\tilde{\beta}_0}{\beta_1-\tilde{\beta}_0}) - \sum_{k=\tilde{i}}^{\tilde{j}-1}\frac{(u_k-\tilde{\beta_0})^{k-i+1}}{(k-i+1)!}a_{j}(k+1)\right). 
	\end{align*}
\end{theorem}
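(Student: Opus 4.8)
The plan is to start from the reduction already recorded in (\ref{equ: P(Sn<b)}): under either $H_0$ or $H_1$, the transformed order statistics $U_{(k)}:=D(p_{(k)})$ are the order statistics of $n$ i.i.d.\ $\mathrm{Uniform}(0,1)$ variables, and with $\beta_0=D(\alpha_0)$, $\beta_1=D(\alpha_1)$ and $u_k$ as in \textit{(N1)}, the monotonicity of $D$ and of $f(x,\cdot)$ turns $\{S_{n,\mathcal R}\le b\}$ into the statement that every rank $k$ with $k_0\le k\le k_1$ whose order statistic falls in the truncation window $[\beta_0,\beta_1]$ satisfies $U_{(k)}\ge u_k$ (with the same convention for an empty restricted domain used in Theorems \ref{Thm: exact} and \ref{Thm: exacttruncate}). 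Thus $P(S_{n,\mathcal R}\le b)$ is a lower-boundary crossing probability for uniform order statistics in which the boundary is ``active'' only on the random subset of ranks landing in $[\beta_0,\beta_1]$.

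First I would split the sample space by the two thresholds: let $i-1$ be the number of the $n$ uniforms below $\beta_0$ and $n-j+1$ the number above $\beta_1$, so that $U_{(i)},\dots,U_{(j-1)}$ are exactly those in $[\beta_0,\beta_1]$. On this event the three groups are independent; the two outer groups contribute the deterministic factors that, together with the factorial normalisation of the joint density $n!$ on the simplex, assemble into $c_{ij}=\beta_0^{i-1}(1-\beta_1)^{n-j+1}/\bigl((i-1)!\,(n-j+1)!\bigr)$, while the $j-i$ inner order statistics are distributed as the order statistics of $j-i$ i.i.d.\ uniforms on $[\beta_0,\beta_1]$. The active boundary on the inner block concerns only the ranks simultaneously in $[k_0,k_1]$, i.e.\ ranks $\tilde i,\dots,\tilde j-1$ with $\tilde i=\max(i,k_0)$, $\tilde j=\min(j,k_1+1)$; the leading unconstrained inner ranks $i,\dots,k_0-1$ (present only when $i<k_0$, which is exactly why $\tilde\beta_0=\beta_0 I_{\{i<k_0\}}$ enters) and the trailing unconstrained inner ranks carry no boundary.

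Next I would evaluate the inner-block probability, which after rescaling to $(\tilde\beta_0,\beta_1)$ is a boundary-crossing probability for $j-i$ uniform order statistics with a lower boundary $(u_{\tilde i},\dots,u_{\tilde j-1})$ imposed on a consecutive block of ranks. This is precisely the situation already handled recursively in Theorems \ref{Thm: exact} and \ref{Thm: exacttruncate}: expanding on the largest constrained rank gives a Bolshev/Steck-type backward recursion, which produces the array $a_j(k)$ with base value $a_j(\tilde j)=0$ and the stated recursion and whose leading value is the bracketed quantity $\tfrac{n!(\beta_1-\tilde\beta_0)^{j-i}}{(j-i)!}\bar{F}_{B(\tilde j-i,\,j-\tilde j+1)}\!\bigl(\tfrac{u_{\tilde j-1}-\tilde\beta_0}{\beta_1-\tilde\beta_0}\bigr)-\sum_{k=\tilde i}^{\tilde j-1}\tfrac{(u_k-\tilde\beta_0)^{k-i+1}}{(k-i+1)!}\,a_j(k+1)$. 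Multiplying by $c_{ij}$ and summing over all admissible block configurations $1\le i\le k_1$, $\tilde i<j\le n+1$ (the lower limit $j>\tilde i$ being precisely the condition that the constrained inner block is non-empty) yields the claimed double sum. As consistency checks, $\alpha_0=0,\alpha_1=1$ forces $c_{ij}=0$ except at $(i,j)=(1,n+1)$ and collapses the statement to Theorem \ref{Thm: exact}, while $k_0=1,k_1=n$ makes all the tildes trivial and collapses it to Theorem \ref{Thm: exacttruncate}.

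The main obstacle is the bookkeeping forced by the double truncation: pinning down the index caps $\tilde i,\tilde j$ and the endpoint shift $\tilde\beta_0$ so that (i) the unconstrained leading inner ranks are correctly absorbed into the rescaling interval, (ii) the backward recursion for $a_j(k)$ terminates at the right base case, and (iii) the outer double sum is a genuine partition of the relevant part of the probability space — in particular the treatment of an empty restricted domain and of out-of-window thresholds $u_k\notin[\beta_0,\beta_1]$, for which the corresponding crossing probability must vanish (as $\bar{F}_{B}$ of an argument $\ge 1$ does). The algebraic core, the uniform-order-statistics recursion itself, is routine once it is set up correctly, being the same device already used in the proofs of the two preceding theorems.
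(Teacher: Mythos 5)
Your proposal follows essentially the same route as the paper's proof: partition the event by which order statistics fall exactly in $[\alpha_0,\alpha_1]$ (indexed by the pair $(i,j)$, giving the factor $c_{ij}$), then evaluate each term by the same Bolshev/Steck-type backward recursion used for Theorems \ref{Thm: exact} and \ref{Thm: exacttruncate}, with $\tilde i$, $\tilde j$, $\tilde\beta_0$ absorbing the unconstrained inner ranks — which is exactly how the paper derives $P_{ij}$ and the $a_j(k)$ recursion. The argument and bookkeeping are correct, so no further changes are needed.
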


The study setting in Li and Siegmund \cite{Li2014higher} is a special case of the general truncation with $\alpha_1 = 1$. For example, the ``modified HC" is to take $\mathcal{R} = \{ 1 < i \leq n/2, p_{(i)} \geq 1/n \}$\cite{Donoho2004}. Li and Siegmund's calculation provided an approximation for the modified HC under $H_0$. As shown in Figure \ref{fig:comparison0}, the approximation does not give the whole distribution. For this setting, the corollary below gives the exact distribution for the general GOF in (\ref{equ: GOFstat}) under both $H_0$ and $H_1$.    

\begin{corollary}
\label{Corol: exactmodified}
	Consider any GOF with statistic in (\ref{equ: GOFstat}) with $\mathcal{R} = \{ \alpha_0 \leq p_{(i)} \} \cap \{ k_0 \leq i \leq k_1\}$ for given $1 \leq k_0 \leq k_1 \leq n$ and  $\alpha_0>0$. Following the notations in Theorem \ref{Thm: exact}, \ref{Thm: exacttruncate} and (N1) and (N2), define  
\[
c_{i} = \frac{\beta_0^{i-1}}{(i-1)!}, \quad{}1\leq i \leq k_1. 
\]
Under either $H_0$ or $H_1$, we have 
\begin{align*}
	P(S_{n,\mathcal{R}} \leq b) = \sum_{i=1}^{k_1}c_{i}\left( \frac{n!(1-\tilde{\beta}_0)^{n+1-i}}{(n+1-i)!}\bar{F}_{B(k_1+1-i,m)}(\frac{u_{k_1}-\tilde{\beta}_0}{1-\tilde{\beta}_0}) - \sum_{k=\tilde{i}}^{k_1-1}\frac{(u_{k}-\tilde{\beta}_0)^{k+1-i}}{(k+1-i)!}a_{k+1}\right). 
\end{align*}
\end{corollary}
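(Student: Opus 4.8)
The plan is to derive Corollary~\ref{Corol: exactmodified} as the limiting case $\alpha_1 \to 1$ (equivalently $\beta_1 = D(\alpha_1) \to 1$) of Theorem~\ref{Thm: exactgeneral}, with the domain $\mathcal{R} = \{\alpha_0 \le p_{(i)}\} \cap \{k_0 \le i \le k_1\}$ obtained by removing the upper p-value truncation. First I would substitute $\beta_1 = 1$ into the formula of Theorem~\ref{Thm: exactgeneral}. The outer coefficient $c_{ij} = \beta_0^{i-1}(1-\beta_1)^{n-j+1}/[(i-1)!(n-j+1)!]$ contains the factor $(1-\beta_1)^{n-j+1}$, which vanishes unless $n-j+1 = 0$, i.e.\ $j = n+1$. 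Hence the double sum $\sum_{i=1}^{k_1}\sum_{j=\tilde i+1}^{n+1}$ collapses to the single term $j = n+1$, and $c_{i,n+1} = \beta_0^{i-1}/(i-1)! = c_i$, matching the corollary's definition of $c_i$. So the main structural step is verifying this collapse is legitimate (the surviving summand is continuous in $\beta_1$ at $\beta_1 = 1$, and all the vanishing summands genuinely vanish rather than producing $0\cdot\infty$ indeterminacies — one should check the Beta survival functions and the $a_j(k)$ recursions stay bounded as $\beta_1 \to 1$, which they do since $u_k/\beta_1 \to u_k \le 1$ and $\bar F_{B(\cdot,\cdot)}$ is bounded by $1$).

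Next I would specialize the inner bracket of Theorem~\ref{Thm: exactgeneral} at $j = n+1$, $\beta_1 = 1$. With $j = n+1$ we have $\tilde j = k_1+1$ (since $j = n+1 > k_1+1$ whenever $k_1 < n$; the edge case $k_1 = n$ should be noted but goes through the same way), so $\tilde j - i = k_1+1-i$ and $j - \tilde j + 1 = n+1-(k_1+1)+1 = n-k_1+1 = m$. Thus $\bar F_{B(\tilde j - i, j - \tilde j + 1)}$ becomes $\bar F_{B(k_1+1-i,\,m)}$, and the prefactor $n!(\beta_1-\tilde\beta_0)^{j-i}/(j-i)!$ becomes $n!(1-\tilde\beta_0)^{n+1-i}/(n+1-i)!$ with argument $(u_{\tilde j - 1} - \tilde\beta_0)/(\beta_1-\tilde\beta_0) = (u_{k_1}-\tilde\beta_0)/(1-\tilde\beta_0)$. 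For the subtracted sum, the index runs $k = \tilde i$ to $\tilde j - 1 = k_1$, and the summand $(u_k - \tilde\beta_0)^{k-i+1}/(k-i+1)! \cdot a_j(k+1)$ becomes $(u_k-\tilde\beta_0)^{k+1-i}/(k+1-i)! \cdot a_{n+1}(k+1)$. The remaining task is to identify $a_{n+1}(k+1)$ (the $\beta_1 = 1$, $j = n+1$ specialization of the Theorem~\ref{Thm: exactgeneral} recursion) with $a_{k+1}$ as defined in Theorem~\ref{Thm: exact}. Writing out the recursion for $a_{n+1}(k)$ with $\beta_1 = 1$: $a_{n+1}(k) = \frac{n!}{(n+1-k)!}\bar F_{B(\tilde j - k,\, j - \tilde j + 1)}(u_{\tilde j - 1}) - \sum_{l=1}^{\tilde j - k}\frac{u_{k+l-1}^l}{l!}a_{n+1}(k+l)$ with $\tilde j = k_1+1$, which reads $\frac{n!}{(n+1-k)!}\bar F_{B(k_1+1-k,\,m)}(u_{k_1}) - \sum_{l=1}^{k_1+1-k}\frac{u_{k+l-1}^l}{l!}a_{n+1}(k+l)$; shifting indices (set the Theorem~\ref{Thm: exact} index to $k+1$) and comparing termwise with $a_k = \frac{n!}{(n-k+1)!}\bar F_{B(k_1-k+1,m)}(u_{k_1}) - \sum_{j=1}^{k_1-k}\frac{u_{k+j-1}^j}{j!}a_{k+j}$, together with the boundary $a_{n+1}(\tilde j) = a_{n+1}(k_1+1) = 0 = a_{k_1+1}$ (consistent with $a_{k_1}$ being the base case of Theorem~\ref{Thm: exact} since its sum is empty), gives the identification $a_{n+1}(k+1) = a_{k+1}$ by induction downward in $k$.

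Assembling the pieces yields exactly $P(S_{n,\mathcal{R}} \le b) = \sum_{i=1}^{k_1} c_i\left(\frac{n!(1-\tilde\beta_0)^{n+1-i}}{(n+1-i)!}\bar F_{B(k_1+1-i,m)}\bigl(\frac{u_{k_1}-\tilde\beta_0}{1-\tilde\beta_0}\bigr) - \sum_{k=\tilde i}^{k_1-1}\frac{(u_k-\tilde\beta_0)^{k+1-i}}{(k+1-i)!}a_{k+1}\right)$, which is the claimed formula. The main obstacle I anticipate is the bookkeeping in the third step: making the index shift between the two recursions precise and confirming the boundary conditions line up (in particular that $\tilde\beta_0 = \beta_0 I_{\{i < k_0\}}$ correctly accounts for whether the $i$-th original order statistic lies below the active window, and that replacing the upper truncation $p_{(i)} \le \alpha_1$ by no constraint is exactly $\alpha_1 = 1$ for continuous distributions since $D$ maps $(0,1)$ onto $(0,1)$). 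An alternative, cleaner route — which I would mention but probably not pursue in full — is to re-run the conditioning argument behind Theorem~\ref{Thm: exactgeneral} directly in the absence of an upper p-value cutoff: condition on the first order statistic that enters the p-value window (at level $\ge \alpha_0$, equivalently $U_{(i)} \ge \beta_0$), giving the factor $c_i = \beta_0^{i-1}/(i-1)!$, then apply a Noe-type / Bolshev-type recursion to the remaining $n+1-i$ uniforms on $[\beta_0, 1]$ rescaled to $[\tilde\beta_0, 1]$; this reproduces the formula without passing through the $\beta_1 \to 1$ limit and sidesteps the $0\cdot\infty$ scrutiny, at the cost of essentially repeating the proof of Theorem~\ref{Thm: exactgeneral}.
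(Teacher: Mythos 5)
Your proposal is correct and is essentially the paper's own route: the paper derives this corollary precisely by specializing Theorem \ref{Thm: exactgeneral} with $\alpha_1=1$ (so $\beta_1=1$ kills every term except $j=n+1$, where $\tilde j=k_1+1$ and $j-\tilde j+1=m$), and your identification $a_{n+1}(k+1)=a_{k+1}$ via the boundary $a_{n+1}(k_1+1)=0$ and downward induction is exactly the bookkeeping the paper leaves implicit. The only detail worth stating explicitly is that the inner sum's $k=k_1$ term drops because it multiplies $a_{n+1}(k_1+1)=0$, which is why the corollary's sum stops at $k_1-1$.
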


To sum up, Theorem \ref{Thm: exactgeneral} covers Theorem \ref{Thm: exact} by fixing $i = 1,j = n+1$ (so that $c_{ij} = 0$) and $\alpha_0=0,\alpha_1=1$. It covers Theorem \ref{Thm: exacttruncate} by letting $k_0=1,k_1=n$, and covers Corollary \ref{Corol: exactmodified} by fixing $j = n+1$ and $\alpha_1=1$. 

Regarding the computational complexity, the calculation given by Theorem \ref{Thm: exact} is equivalent to solving an upper triangular linear systems using backward substitution, which is $O(n^2)$, an improvement from $O(n^3)$ required by other exact distribution calculations \cite{noe1972calculation, noe1968calculation, barnett2014analytical}. More importantly, our method can handle a more general family of GOFs defined in (\ref{equ: GOFstat}) with a more flexible supremum domain $\mathcal{R}$. When $\mathcal{R}$ is more complicated, Corollary \ref{Corol: exactmodified}  is still $O(n^2)$ because the inner loop $a_k$ is not dependent on $i$. Theorem \ref{Thm: exacttruncate} and Theorem \ref{Thm: exactgeneral} are $O(n^3)$.

\subsection{Approximate the distributions\label{Sect: ApproxDistn}}

In this section, we provide several calculation methods to approximate the distributions of the GOF family. Under more restricted conditions, computation load could be significantly reduced. For simplicity we provide results here for the supremum domain $\mathcal{R}=\{ k_0 \leq i \leq k_1 \}$. More general results for  $\mathcal{R}$ in (\ref{equ: sup_domain}) can be obtained by following the similar idea of the calculation in Theorem \ref{Thm: exactgeneral}.

The theorem below gives an approximated calculation based on a joint Gamma distribution. %The deduction follows the similar idea of Theorem \ref{Thm: exact}. 

\begin{theorem}
\label{Thm: general} 
	Consider any GOF with statistic in (\ref{equ: GOFstat}) with $\mathcal{R}=\{ k_0 \leq i \leq k_1 \}$. Following the notations (N1) and (N3), define  
\[
\begin{array}{l l}	
d_k = (n+1)D(g(\frac{k}{n}, b)), \quad k=k_0, ..., k_1. \\
c_k=\displaystyle \bar{F}_{\Gamma(k)}(d_{k_1}) - \sum_{j=1}^{k-1}\frac{d_{k_1-k+j}^{j}}{j!}c_{k-j}, \quad k=2, ..., k_1, \text{and } \\
c_{1}=\bar{F}_{\Gamma(1)}(d_{k_1}).
\end{array}
\]
Under either $H_0$ or $H_1$, we have
\[
	P(S_n \leq b)=(1+o(1))  (\bar{F}_{\Gamma(k_1)}(d_{k_1}) - \sum_{k=k_0}^{k_1-1}\frac{d_k^k}{d!}c_{k_1-k}).
\]
\end{theorem}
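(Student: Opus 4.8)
The plan is to obtain Theorem~\ref{Thm: general} directly from the exact formula of Theorem~\ref{Thm: exact} by replacing, term by term, each Beta distribution appearing there with its large-parameter Gamma limit. By (\ref{equ: P(Sn<b)}) and notation (N1), under either hypothesis $P(S_n\le b)=P\big(U_{(k)}>u_k,\ k_0\le k\le k_1\big)$ with $u_k=D(g(k/n,b))$ and $U_{(1)}\le\cdots\le U_{(n)}$ the order statistics of $n$ i.i.d.\ Uniform$(0,1)$ variables, and Theorem~\ref{Thm: exact} evaluates this \emph{exactly} as $\bar{F}_{B(k_1,m)}(u_{k_1})-\sum_{i=k_0}^{k_1-1}\frac{u_i^i}{i!}a_{i+1}$, with $m=n-k_1+1$. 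Writing $d_k=(n+1)u_k=(n+1)D(g(k/n,b))$ as in the statement, the two facts that drive everything are: (i) for each $\ell$, $(n+1)\,\mathrm{Beta}(\ell,m)\Rightarrow\mathrm{Gamma}(\ell,1)$, so that $\bar{F}_{B(\ell,m)}(u_{k_1})=(1+o(1))\,\bar{F}_{\Gamma(\ell)}(d_{k_1})$; and (ii) $\frac{n!}{(n-\ell+1)!}=(1+o(1))(n+1)^{\ell-1}$. Substituting (i) and (ii) into the formula of Theorem~\ref{Thm: exact} and collecting every factor of $(n+1)$ into the $d_k$'s converts its coefficients $a_k$ into exactly the $c_k$'s of the present statement, and its closed form into the asserted one.

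First I would make (i) and (ii) precise, uniformly over the indices that occur. For (ii), $\log\frac{n!}{(n-\ell+1)!}=\sum_{j=0}^{\ell-2}\log(n-j)=(\ell-1)\log(n+1)+o(1)$ whenever $\ell\le k_1$ is not too large relative to $n$. For (i) I would use Stirling's formula to control the ratio of the $\mathrm{Beta}(\ell,m)$ density at $x/(n+1)$ to the $\mathrm{Gamma}(\ell,1)$ density at $x$, which gives $\bar{F}_{B(\ell,m)}(u_{k_1})=(1+o(1))\,\bar{F}_{\Gamma(\ell)}\big((\ell+m)u_{k_1}\big)$, and then the $1$-Lipschitz property of $\bar{F}_{\Gamma(\ell)}$ to absorb the argument shift, $\big|\bar{F}_{\Gamma(\ell)}\big((\ell+m)u_{k_1}\big)-\bar{F}_{\Gamma(\ell)}(d_{k_1})\big|\le\big|(\ell+m)-(n+1)\big|u_{k_1}=(k_1-\ell)u_{k_1}$; hence $\bar{F}_{B(\ell,m)}(u_{k_1})=(1+o(1))\bar{F}_{\Gamma(\ell)}(d_{k_1})$ in a regime where $(k_1-\ell)u_{k_1}\to0$ and $\bar{F}_{\Gamma(\ell)}(d_{k_1})$ stays bounded away from $0$. (In the final closed form no shift arises, since the relevant Beta is $B(k_1,m)$ with $k_1+m=n+1$.) I would state such regime conditions on $(k_0,k_1,b,n)$ explicitly.

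Next I would run a downward induction on $k$ showing $a_k=(1+o(1))(n+1)^{k-1}c_{k_1-k+1}$. The base case $a_{k_1}$ is $(1+o(1))(n+1)^{k_1-1}\bar{F}_{\Gamma(1)}(d_{k_1})=(1+o(1))(n+1)^{k_1-1}c_1$. For the step, with $\ell:=k_1-k+1$, in $a_k=\frac{n!}{(n-k+1)!}\bar{F}_{B(\ell,m)}(u_{k_1})-\sum_{j=1}^{\ell-1}\frac{u_{k+j-1}^j}{j!}a_{k+j}$ the lead term is $(1+o(1))(n+1)^{k-1}\bar{F}_{\Gamma(\ell)}(d_{k_1})$, while (using $k+j-1=k_1-\ell+j$ and, by the inductive hypothesis, $a_{k+j}=(1+o(1))(n+1)^{k+j-1}c_{\ell-j}$) the $j$-th summand equals $\frac{u_{k+j-1}^j}{j!}(1+o(1))(n+1)^{k+j-1}c_{\ell-j}=(1+o(1))(n+1)^{k-1}\frac{d_{k_1-\ell+j}^j}{j!}c_{\ell-j}$; thus $(n+1)^{k-1}$ factors out and the remaining bracket is exactly the recursion defining $c_\ell$. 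Feeding $a_{i+1}=(1+o(1))(n+1)^i c_{k_1-i}$ back into the closed form of Theorem~\ref{Thm: exact} and using $\bar{F}_{B(k_1,m)}(u_{k_1})=(1+o(1))\bar{F}_{\Gamma(k_1)}(d_{k_1})$ turns $\frac{u_i^i}{i!}a_{i+1}$ into $(1+o(1))\frac{d_i^i}{i!}c_{k_1-i}$ and yields $P(S_n\le b)=(1+o(1))\big(\bar{F}_{\Gamma(k_1)}(d_{k_1})-\sum_{i=k_0}^{k_1-1}\frac{d_i^i}{i!}c_{k_1-i}\big)$, which is the claim.

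The main obstacle is controlling the propagation of the $(1+o(1))$ errors through the triangular recursion, whose depth is $k_1-k_0$: one needs the per-step relative errors in (i) to be small enough — which forces explicit regime conditions (roughly, $k_1$ not too large relative to $n$ and $b$ in a range where $d_{k_1}$ is not too large and the target probability is bounded away from $0$) — that the accumulated error still collapses to $1+o(1)$ after $\sim k_1$ levels; and one must check that the powers of $(n+1)$ cancel \emph{exactly} at every level, so that what remains is the genuine recursion among the $c_k$'s rather than merely an asymptotic one. A self-contained alternative that avoids Theorem~\ref{Thm: exact}, which I would record as a remark, uses the Rényi representation $\big(U_{(1)},\dots,U_{(n)}\big)\overset{d}{=}\big(\Gamma_1,\dots,\Gamma_n\big)/\Gamma_{n+1}$ with $\Gamma_k=E_1+\cdots+E_k$, $E_i\overset{\text{i.i.d.}}{\sim}\mathrm{Exp}(1)$: then $P(S_n\le b)=P\big(\Gamma_k>u_k\Gamma_{n+1},\ k_0\le k\le k_1\big)$, and since $\Gamma_{n+1}/(n+1)\to1$ almost surely a sandwiching argument gives $P(S_n\le b)=(1+o(1))\,P\big(\Gamma_k>d_k,\ k_0\le k\le k_1\big)$, a Poisson-process boundary-crossing probability that equals $\bar{F}_{\Gamma(k_1)}(d_{k_1})-\sum_{k=k_0}^{k_1-1}\frac{d_k^k}{k!}c_{k_1-k}$ exactly by conditioning successively on the partial sums and invoking the memoryless property (a Poisson analogue of Noé's recursion), with $c_\ell=P\big(\Gamma_j>d_{k_1-\ell+j},\ j=1,\dots,\ell\big)$.
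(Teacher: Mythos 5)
Your primary route has a concrete gap that goes beyond the ``error propagation'' caveat you flag, and it is fatal in the regime where the theorem is actually used. The per-term equivalence you want to prove by downward induction, $a_k=(1+o(1))(n+1)^{k-1}c_{k_1-k+1}$, already fails at the base case once $k_1$ grows proportionally to $n$ (the paper's default is $\mathcal{R}=\{1\le i\le n/2\}$) and $u_{k_1}=D(g(k_1/n,b))$ is of constant order: there $a_{k_1}=\frac{n!}{(n-k_1+1)!}(1-u_{k_1})^{n-k_1+1}$, while $(n+1)^{k_1-1}c_1=(n+1)^{k_1-1}e^{-(n+1)u_{k_1}}$, and the two differ by an exponentially large factor, since $\frac{n!}{(n-k_1+1)!}\ne(1+o(1))(n+1)^{k_1-1}$ and $(n-k_1+1)\log(1-u_{k_1})\ne-(n+1)u_{k_1}+o(1)$ when $k_1\asymp n$, $u_{k_1}\asymp 1$. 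Your own shift bound $(k_1-\ell)u_{k_1}\to0$ makes the same point: it forces roughly $k_1=o(\sqrt n)$, far narrower than the theorem's scope. Moreover, even inside such a restricted regime, the recursion for $a_k$ (and the final formula) is an alternating expression---a large lead term minus large corrections---so a relative $(1+o(1))$ error in each replaced Beta/factorial term does not transfer to a relative $(1+o(1))$ error in the difference without lower bounds ruling out catastrophic cancellation, uniformly over the $O(k_1)$ levels; nothing in the proposal supplies these, and the paper's Discussion remark about loss of significant digits in this very recursion is the numerical shadow of exactly that cancellation. In short, the Beta-to-Gamma substitution is only valid for the assembled probability, not for the recursion's building blocks, so the term-by-term route cannot deliver the stated result.

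The alternative you record ``as a remark'' is in fact the paper's actual proof, and its ordering of steps is what makes the argument work: represent $U_{(i)}=D(p_{(i)})$ as $\Gamma_i/\Gamma_{n+1}$ (R\'enyi), replace $\Gamma_{n+1}$ by $n+1$ \emph{once}, at the level of the single event $\{\Gamma_k>(n+1)u_k,\ k_0\le k\le k_1\}$, via a sandwich argument using the exponential concentration of $\Gamma_{n+1}/(n+1)$ (Lemma \ref{Lemma: GammaApprox}, built on Lemma \ref{Lemma: gammaTail}), and then evaluate the resulting gamma boundary-crossing probability \emph{exactly} through the joint density of $(\Gamma_{k_0},\dots,\Gamma_{k_1})$ (Lemmas \ref{Lemma: jointGamma} and \ref{Lemma: GammaSurvivalRecursion}). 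The $c_k$ of the statement are then the exact quantities $Q_{k_1-k}=P(\Gamma_j\ge d_{k_1-k+j},\,1\le j\le k)$, so no approximation error has to be pushed through the triangular recursion at all; the only asymptotic step is the single event-level replacement. You should promote that remark to the proof (as the paper does via its gamma-recursion lemmas, mirroring the proof of Theorem \ref{Thm: exact}); the one caveat from your write-up worth keeping is that uniformity when $k_1$ grows with $n$ is also glossed in the paper, since Lemma \ref{Lemma: GammaApprox} is stated for a fixed finite sequence.
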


%Compare with literature, the main value of this theorem is to address a broader class of GOF tests defined in (\ref{equ: GOFstat}) under both $H_0$ and $H_1$. Even if for p-value calculation of related tests under $H_0$, it still shows some relative advantage. In particular, when comparing with \cite{noe1972calculation, noe1968calculation, barnett2014analytical}, which require $k_0=1$ and $k_1=n$ and computation $O(n^3)$, this theorem allows any $1 \leq k_0 \leq k_1 \leq n$ and reduces computation to $O(n^2)$. When comparing with Li and Siegmund \cite{Li2014higher}, which only provides calculation for part of the null distribution, this theorem allows to obtain the whole distribution (see Figures \ref{fig:comparison0} and \ref{fig:comparison} for comparison).    

To further reduce the computation, we can also deduce a one-step formula under stronger assumptions. In particular, if $D(g(\frac{k}{n}, b))$ is a linear function of $k$, we can provide a closed-form formula for the distribution that gives the same accuracy as the above theorem. 

\begin{theorem}
\label{Thm: linearCase} 
	Consider any GOF with statistic in (\ref{equ: GOFstat}) with $\mathcal{R}=\{ 1 \leq i \leq k_1 \}$. Assume the function $D(g(\frac{k}{n}, b))=a + \lambda k$, for some $\lambda \geq 0$. Following the notation (N3) and (N4), under either $H_0$ or $H_1$ we have   
\[
       P(S_n \leq b)=(1+o(1))e^{-a}(1-\lambda+h_{k_1}(\lambda)). 
\]
\end{theorem}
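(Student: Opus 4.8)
The plan is to reduce the statement, through the Gamma/Poisson approximation that already powers Theorem~\ref{Thm: general}, to an \emph{exact} closed-form evaluation of a Poisson boundary-crossing probability with an affine boundary, and then to recognize that closed form as $e^{-a}(1-\lambda+h_{k_1}(\lambda))$. \textbf{Step 1: reduction to a Gamma boundary-crossing probability.} By (\ref{equ: P(Sn<b)}), under either hypothesis $P(S_n\le b)=P(U_{(i)}>D(g(i/n,b)),\ i=1,\dots,k_1)$, where $U_{(1)}\le\cdots\le U_{(n)}$ are the order statistics of $n$ i.i.d.\ Uniform$(0,1)$ variables. Writing $U_{(i)}=\Gamma_i/\Gamma_{n+1}$ with $\Gamma_i=E_1+\cdots+E_i$ for i.i.d.\ standard exponentials and using $\Gamma_{n+1}/(n+1)\to1$, Theorem~\ref{Thm: general} (with $k_0=1$) already gives $P(S_n\le b)=(1+o(1))\,P(\Gamma_i>d_i,\ i=1,\dots,k_1)$ with $d_i=(n+1)D(g(i/n,b))$ --- indeed the displayed expression in that theorem is exactly this probability, computed by backward substitution. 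The linearity hypothesis makes the normalized boundary affine in $i$; write $d_i=a+\lambda i$. So it suffices to compute in closed form $\phi_{k_1}(a):=P(\Gamma_i>a+\lambda i,\ i=1,\dots,k_1)$.

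\textbf{Step 2: factor out $e^{-a}$.} Conditioning on the first arrival $\Gamma_1=s$ (which must exceed $a+\lambda\ge 0$, the latter since $d_1\ge0$) and noting that $(\Gamma_2-\Gamma_1,\Gamma_3-\Gamma_1,\dots)$ are again the arrival times of a rate-$1$ Poisson process, I get the self-similar recursion $\phi_m(a)=\int_{a+\lambda}^{\infty}e^{-s}\phi_{m-1}(a+\lambda-s)\,ds=e^{-a}e^{-\lambda}\int_{-\infty}^{0}e^{\theta}\phi_{m-1}(\theta)\,d\theta$, whence $\phi_{k_1}(a)=e^{-a}\phi_{k_1}(0)$, reducing the task to $\phi_{k_1}(0)=P(\Gamma_i>\lambda i,\ i=1,\dots,k_1)$. \textbf{Step 3: ballot theorem and the Gamma integral.} Conditioning on $\Gamma_{k_1+1}=\sigma$, the points $\Gamma_1,\dots,\Gamma_{k_1}$ are the order statistics of $k_1$ i.i.d.\ Uniform$(0,\sigma)$ variables, and the continuous ballot (cycle) lemma gives $P(\Gamma_i>\lambda i,\ \forall i\le k_1\mid\Gamma_{k_1+1}=\sigma)=(1-k_1\lambda/\sigma)^{+}$. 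Integrating against the Gamma$(k_1+1,1)$ density of $\sigma$, $\phi_{k_1}(0)=\int_{k_1\lambda}^{\infty}\frac{\sigma^{k_1}e^{-\sigma}}{k_1!}\bigl(1-\tfrac{k_1\lambda}{\sigma}\bigr)d\sigma=\bar{F}_{\Gamma(k_1+1)}(k_1\lambda)-\lambda\bar{F}_{\Gamma(k_1)}(k_1\lambda)$. Applying the incomplete-Gamma identity $\bar{F}_{\Gamma(k+1)}(x)=\bar{F}_{\Gamma(k)}(x)-x^{k}e^{-x}/k!$ to each term collapses this to $\bar{F}_{\Gamma(k_1)}(k_1\lambda)-\lambda\bar{F}_{\Gamma(k_1-1)}(k_1\lambda)=1-\lambda+h_{k_1}(\lambda)$ by the definition of $h_{k_1}$ in (N4). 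Combining with Steps~1--2 yields $P(S_n\le b)=(1+o(1))e^{-a}(1-\lambda+h_{k_1}(\lambda))$.

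\textbf{Main obstacle.} Steps~2 and~3 are routine calculus once the ballot identity is in hand, so the real work is (i) stating and justifying the continuous ballot theorem $P(\Gamma_i>\lambda i,\ \forall i\le k_1\mid\Gamma_{k_1+1}=\sigma)=(1-k_1\lambda/\sigma)^{+}$ at the required level of rigor (e.g.\ via the cycle lemma applied to the interarrival gaps), and (ii) carrying the $(1+o(1))$ from Step~1 with enough uniformity that the closed form inherits the \emph{same} asymptotic accuracy as Theorem~\ref{Thm: general} --- i.e.\ that replacing the Beta order statistics $U_{(i)}$ by $\Gamma_i/(n+1)$ costs only a multiplicative $1+o(1)$ for this particular affine event. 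A fully self-contained alternative to Steps~2--3 is to substitute $d_k=a+\lambda k$ directly into the recursion for $c_k$ in Theorem~\ref{Thm: general} and solve it by induction on $k_1$, using the incomplete-Gamma expression above as the inductive ansatz; this avoids the ballot theorem but trades it for a longer elementary induction.
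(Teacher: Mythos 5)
Your proposal is correct, and it reaches the closed form by a genuinely different route than the paper. The paper's proof works directly with the iterated integral for $P(\Gamma_k>a+\lambda k,\,1\le k\le k_1)$: it evaluates the inner $(k_1-1)$-fold integral in closed form via the Abel--Goncharov polynomial identity (Lemma~\ref{Lemma: Abel-Goncharov}, proved by induction), integrates the result against $e^{-z_{k_1}}$, and then invokes Lemma~\ref{Lemma: GammaApprox} to transfer the answer from the $\Gamma_i$ boundary-crossing event to the uniform order statistics $U_{(i)}=\Gamma_i/\Gamma_{n+1}$ at cost $(1+o(1))$. You instead (i) factor out $e^{-a}$ by conditioning on $\Gamma_1$ and exploiting the renewal structure, which is valid here since $a+\lambda=d_1\ge 0$ and $\lambda\ge 0$, and (ii) evaluate the $a=0$ case by conditioning on $\Gamma_{k_1+1}=\sigma$, under which $(\Gamma_1,\dots,\Gamma_{k_1})$ are uniform order statistics on $(0,\sigma)$, and applying the continuous ballot (Daniels-type) identity $P(\Gamma_i>\lambda i\ \forall i\le k_1\mid\Gamma_{k_1+1}=\sigma)=(1-k_1\lambda/\sigma)^{+}$, then integrating against the $\mathrm{Gamma}(k_1+1,1)$ density. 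Your resulting expression $\bar{F}_{\Gamma(k_1+1)}(k_1\lambda)-\lambda\bar{F}_{\Gamma(k_1)}(k_1\lambda)$ does equal $1-\lambda+h_{k_1}(\lambda)$; note only that the incomplete-gamma identity you quote has the wrong sign (it should read $\bar{F}_{\Gamma(k+1)}(x)=\bar{F}_{\Gamma(k)}(x)+x^{k}e^{-x}/k!$), which is harmless because the two correction terms cancel at $x=k_1\lambda$ after weighting by $\lambda$. The asymptotic transfer in your Step~1 is exactly the paper's Lemma~\ref{Lemma: GammaApprox} (also underlying Theorem~\ref{Thm: general}), so your proof carries no uniformity burden beyond what the paper itself assumes. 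What each approach buys: the paper's induction is fully self-contained and elementary, while your ballot/cycle-lemma argument is shorter and more conceptual but requires either citing or reproving the Daniels identity; your suggested fallback of solving the recursion of Theorem~\ref{Thm: general} with an affine boundary by induction is essentially the paper's Abel--Goncharov route.
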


\smallskip
One example that satisfies the linearity of $D(g(\frac{k}{n}, b))$ is the simple Kolmogorov-Smirnov ($KS^+$) test in (\ref{equ: KSstat}) under $H_0$, where $a=-(n+1)b$ and $\lambda=\frac{n+1}{n}$. In this case, based on the above theorem we have the following corollary
\begin{corollary}
\label{Corol: KS}
\emph{}
	Consider the test statistic $KS^+$ in (\ref{equ: KSstat}) under $H_0$. Following the notation (N3) and (N4), for $b\leq \frac{1}{n}$, we have 
\[
	P(KS^+ \leq b)=(1+o(1))e^{(n+1)b}(-\frac{1}{n}+h_{k_1}(\frac{n+1}{n})).  
\]
\end{corollary}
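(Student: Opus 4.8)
The plan is to read Corollary~\ref{Corol: KS} off directly from Theorem~\ref{Thm: linearCase}: the entire content of the corollary is that the $KS^+$ statistic under the uniform null falls into the ``linear'' regime of that theorem with an explicitly computable slope and intercept. So the work reduces to three short verifications: computing the inverse function $g$ for $KS^+$, checking that the resulting profile $D(g(\frac{k}{n},b))$ is affine in $k$ and reading off the constants $a$ and $\lambda$, and confirming that the hypothesis $b\le\frac1n$ is exactly what makes the reduction legitimate.

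First I would compute $g_{KS^+}$. Since $f_{KS^+}(x,y)=x-y$ by~(\ref{equ: KSstat}) is strictly decreasing in $y$, inverting in the second slot gives $g_{KS^+}(x,b)=x-b$. Under $H_0$ we have $F_1=F_0$, hence $D(x)=x$ by~(\ref{equ: D(x)}), so $D(g_{KS^+}(\frac{k}{n},b))=\frac{k}{n}-b$ and $d_k:=(n+1)D(g_{KS^+}(\frac{k}{n},b))=\frac{n+1}{n}k-(n+1)b$. This is affine in $k$ of the form $a+\lambda k$ with $\lambda=\frac{n+1}{n}\ge 0$ (meeting the sign requirement of Theorem~\ref{Thm: linearCase}) and $a=-(n+1)b$; the domain $\mathcal{R}=\{1\le i\le k_1\}$ of $KS^+$ matches the domain assumed in that theorem, with $k_0=1$. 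The one remaining check is that every $u_k=D(g(\frac{k}{n},b))=\frac{k}{n}-b$ appearing in the derivation is a genuine point of $[0,1]$ — equivalently $g_{KS^+}(\frac{k}{n},b)\ge 0$ for all $k$ in the range — so that no index silently drops out of the intersection in~(\ref{equ: P(Sn<b)}) and the linear-case formula applies verbatim. The minimum over $k\ge 1$ is $\frac1n-b$, so this is precisely the stated hypothesis $b\le\frac1n$. Plugging $a=-(n+1)b$ and $\lambda=\frac{n+1}{n}$ into the conclusion of Theorem~\ref{Thm: linearCase} and using $1-\frac{n+1}{n}=-\frac1n$ gives $P(KS^+\le b)=(1+o(1))e^{(n+1)b}(-\frac1n+h_{k_1}(\frac{n+1}{n}))$, as claimed.

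There is no substantive obstacle here, since all of the analytic content lives in Theorem~\ref{Thm: linearCase}; the only care required is bookkeeping. Specifically, one should keep straight whether the linearity hypothesis of Theorem~\ref{Thm: linearCase} is imposed on $D(g(\frac{k}{n},b))$ itself or on $d_k=(n+1)D(g(\frac{k}{n},b))$ — the announced constants $a=-(n+1)b$, $\lambda=\frac{n+1}{n}$ are consistent with the latter normalization — and one should confirm that $b\le\frac1n$ is the sharp condition keeping the full index block $\{1,\dots,k_1\}$ active in the supremum. Beyond that consistency check, the corollary is immediate.
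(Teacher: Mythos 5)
Your proposal is correct and takes essentially the same route as the paper, which offers no separate proof of Corollary~\ref{Corol: KS} beyond specializing Theorem~\ref{Thm: linearCase} to $g_{KS^+}(x,b)=x-b$ and $D(x)=x$ under $H_0$, with $a=-(n+1)b$ and $\lambda=\frac{n+1}{n}$. Your two bookkeeping checks — that these constants parametrize $d_k=(n+1)D(g(\frac{k}{n},b))$ rather than $D(g(\frac{k}{n},b))$ itself, and that $b\le\frac{1}{n}$ is exactly what keeps $u_k=\frac{k}{n}-b\ge 0$ for all $k\ge 1$ so the full index block stays active — correctly supply details the paper leaves implicit.
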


\smallskip
The requirement of linear $D(g(\frac{k}{n}, b))$ in Theorem \ref{Thm: linearCase} is stringent. However, if $D(g(\frac{k}{n}, b))$ is close to linear, we can still further simplify the calculation of Theorem \ref{Thm: general}. As one example, Theorem \ref{Thm: CloseLinearCase} below provides such a calculation formula for GOF and the conditions on $D(g(\frac{k}{n}, b))$. The strategy for proving this theorem largely follows the idea in Li and Siegmund  \cite{Li2014higher}. The key difference is that, instead of using the beta distribution in Li and Siegmund, we use the gamma distribution, which has a simpler density function for addressing a wider family of the GOF tests. See Appendix for details. 
 
 \begin{theorem}
\label{Thm: CloseLinearCase}
\emph{}
	Consider GOF tests in (\ref{equ: GOFstat}) with $\mathcal{R}=\{ k_0 \leq i \leq k_1 \}$. Follow notations (N1) -- (N5), and define $d_{k}=(n+1)D(g(\frac{k}{n}, b))$, $d_{k}^\prime=(n+1)\frac{d }{d x}D(g(\frac{k}{n}, b))$, and $k^*=\min\{k_1-k,\sqrt{n}\}$. Assume $D(g(x, b))$ satisfies
\begin{enumerate}
\item $D(g(x, b)) < 1$ is increasing and convex in x for $\frac{k_0}{n}\leq x\leq \frac{k_1}{n}$,
\item $\frac{d}{d x}D(g(x, b))<1$, and
\item $D(g(k/n, b))<\frac{k}{n+1}$, for $k>1$ and large $n$. 
\end{enumerate}
Under either $H_0$ or $H_1$ we have
\[
       P(S_n\geq b)=(1+o(1))\sum_{k=k_0}^{k1}(1-\frac{d_k^\prime}{n}+h_{k^*}(\frac{d_k^\prime}{n}))f_{P(d_k)}(k).
\]
\end{theorem}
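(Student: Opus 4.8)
The plan is to recast $\{S_n\ge b\}$ as a boundary-crossing event for the arrival times of a Poisson process, split it according to the last crossing, and then linearize the boundary so that each piece factors into a Poisson point probability times an explicit ballot probability.

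By (\ref{equ: P(Sn<b)}), $P(S_n\ge b)=P\big(\exists\,k\in[k_0,k_1]:\ U_{(k)}\le v_k\big)$, where $U_{(k)}:=D(p_{(k)})$ is the $k$th order statistic of $n$ i.i.d.\ Uniform$(0,1)$ variables under either hypothesis and $v_k:=D(g(k/n,b))$. Embed $U_{(k)}=\Gamma_k/\Gamma_{n+1}$ with $\Gamma_k=E_1+\cdots+E_k$ and $E_j$ i.i.d.\ $\mathrm{Exp}(1)$; using $\Gamma_{n+1}/(n+1)\to1$ (the step that lets the Gamma density replace the Beta density, as in Theorem \ref{Thm: general}, at the cost of a $1+o(1)$) the target becomes $P\big(\exists\,k:\ \Gamma_k\le d_k\big)$ with $d_k:=(n+1)v_k$. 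Since $d_k$ is increasing in $k$ (Assumption 1), partitioning by the largest crossing index gives the exact identity
\[
P\big(\exists\,k:\ \Gamma_k\le d_k\big)=\sum_{k=k_0}^{k_1}P\big(\Gamma_k\le d_k,\ \ \Gamma_j>d_j\text{ for all }j=k+1,\dots,k_1\big).
\]

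I would then evaluate the $k$th term by conditioning on $\Gamma_k$: the forward increments $T_m:=\Gamma_{k+m}-\Gamma_k$ are Poisson-process partial sums independent of $\Gamma_k$, and the constraint reads $T_m>d_{k+m}-\Gamma_k$ for $m=1,\dots,k_1-k$. By convexity of $d_k$ (Assumption 1), $d_{k+m}\ge d_k+m\lambda_k$ with $\lambda_k:=d_k'/n$ (which is $<1$ for large $n$ by Assumption 2), the convexity gap being $O(m^2/n)$; so over the horizon $m\le k^*=\min\{k_1-k,\sqrt n\}$ one may replace the boundary by the tangent line $d_k+m\lambda_k$ at a cost $1+o(1)$, while for $m>\sqrt n$ the law of large numbers makes $T_m\approx m\gg d_{k+m}-\Gamma_k$ so the far constraints are redundant up to $o(1)$. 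With the linearized boundary the rest is exact: writing $w=d_k-\Gamma_k\ge0$, the forward ballot probability $q_k(w):=P(T_m>w+m\lambda_k,\ m=1,\dots,k^*)$ satisfies $q_k(w)=e^{-w}q_k(0)$ by the memoryless property, so with $\phi_k$ the $\mathrm{Gamma}(k,1)$ density
\[
\int_0^{d_k}\phi_k(s)\,q_k(d_k-s)\,ds=q_k(0)\,f_{P(d_k)}(k),\qquad\text{since}\quad\int_0^{d}\phi_k(s)e^{-(d-s)}\,ds=\frac{d^k e^{-d}}{k!}.
\]
Finally $q_k(0)=P(T_m>m\lambda_k,\ m=1,\dots,k^*)=1-\lambda_k+h_{k^*}(\lambda_k)$ is the Tak\'acs ballot identity for Poisson partial sums staying above a line through the origin (checking $K=1,2$ by hand gives $e^{-\lambda}$ and $e^{-2\lambda}(1+\lambda)$, matching $1-\lambda+h_K(\lambda)$ with $h_K$ as in (N4)). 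Collecting the $k$th term as $(1+o(1))(1-\lambda_k+h_{k^*}(\lambda_k))f_{P(d_k)}(k)$ and summing over $k$ yields the stated formula.

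The main obstacle is the error accounting in the localization step: one must show that the Poisson embedding, the tangent-line replacement, and the $\sqrt n$ truncation each cost only a \emph{uniform-in-$k$} multiplicative $1+o(1)$, so that the per-$k$ errors still sum to $o(1)$ times the main sum. This is exactly where Assumptions 1--3 are used — monotonicity and convexity of $D(g(\cdot,b))$ to make the tangent line a clean one-sided bound, derivative $<1$ so that $\lambda_k<1$ and the ballot probabilities stay bounded away from $0$, and $d_k<k$ for large $n$ so that on $\{\Gamma_k\le d_k\}$ the overshoot $w=d_k-\Gamma_k$ has exponential tails and the linearization is evaluated in the right regime — following the scheme of Li and Siegmund \cite{Li2014higher} but with the simpler Gamma density replacing their Beta density.
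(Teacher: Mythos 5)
Your proposal is correct and follows essentially the same route as the paper's proof: Poissonization of the uniform order statistics via $\Gamma_k/\Gamma_{n+1}$ (Lemma \ref{Lemma: GammaApprox}), the disjoint last-crossing decomposition $\{S_n\geq b\}=\bigcup_{k}A_{n,k}$ (Lemma \ref{Lemma: Ank}), tangent/chord linearization of the convex boundary over a $\sqrt{n}$ window with Chernoff-type control (Lemma \ref{Lemma: gammaTail}) of the far constraints, and the linear-boundary identity $e^{-a}(1-\lambda+h_K(\lambda))$ of Theorem \ref{Thm: linearCase}, ending with the same integral identity that produces $f_{P(d_k)}(k)$. The only cosmetic difference is that you get the per-term factorization by conditioning on $\Gamma_k$ and memorylessness ($q_k(w)=e^{-w}q_k(0)$), while the paper uses the exact factorization $P(A_{n,k})=\frac{d_k^k}{k!}Q_k$ for the upper bound and the same conditional integral for the lower bound; the uniform-in-$k$ error accounting you flag as the main obstacle is precisely where the paper's proof spends its effort.
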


\smallskip
Such a close-to-linear property of $D(g(\frac{k}{n}, b))$ can be satisfied by $HC^{2004}$ under $H_0$. In this case, $D(g(\frac{k}{n}, b)) = g(x, b)$ is given in (\ref{equ:g2(x)}), and the conditions can be satisfied when $b$ is in the order of $O(\sqrt{n})$. Thus, we can get a computation-easy formula. 

\begin{corollary}
\label{Corol: HC}
\emph{}
	Consider the test statistic $HC^{2004}$ in (\ref{equ: HCstat}) with $\mathcal{R}=\{ k_0 \leq i \leq k_1 \}$. Let $b_0=\frac{b}{\sqrt{n}}$ be a positive constant $>2x-1$, $\frac{k_0}{n}<x<\frac{k_1}{n}$. Define function 
	\begin{align*} 
	g(x,b_0) &= \frac{1}{1+b_0^2}[x+(b_0^2-b_0\sqrt{b_0^2+4x(1-x)})/2], \\
	g^\prime(x, b_0) &= \frac{1}{1+b_0^2}[1-\frac{b_0(1-2x)}{\sqrt{b_0^2+4x(1-x)}}].
	\end{align*} 
Following the notation (N2), under $H_0$, we have
\[
	P(HC^{2004}\geq b) =(1+o(1)) \sum_{k=k_0}^{k1}\left(1-g^\prime(\frac{k}{n}, b_0)+h_{k^*}(g^\prime(\frac{k}{n}, b_0))\right)f_{P(g(\frac{k}{n}, b_0)n)}(k). 
\]
\end{corollary}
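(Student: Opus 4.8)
The plan is to obtain Corollary~\ref{Corol: HC} as a direct specialization of Theorem~\ref{Thm: CloseLinearCase} to the statistic $HC^{2004}$ under the null. First I would record that under $H_0$ the transformation in (\ref{equ: D(x)}) is the identity, $D(x)=x$, so that $D(g(\tfrac kn,b))=g_{HC^{2004}}(\tfrac kn,b)$ with $g_{HC^{2004}}$ as in (\ref{equ:g2(x)}). Writing $b_0=b/\sqrt n$ (so $b^2/n=b_0^2$ and $b/\sqrt n=b_0$) turns that formula into exactly the function $g(x,b_0)$ displayed in the corollary, and a one-line differentiation in $x$ --- treating $b_0$ as a constant and using $b_0^2+4x(1-x)=(b_0^2+1)-(1-2x)^2$ to simplify --- produces the stated $g'(x,b_0)=\partial_x g(x,b_0)$. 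Hence in the notation of Theorem~\ref{Thm: CloseLinearCase} we have $d_k=(n+1)g(\tfrac kn,b_0)$ and $d_k'=(n+1)g'(\tfrac kn,b_0)$.

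Next I would verify the three hypotheses of Theorem~\ref{Thm: CloseLinearCase} for $D(g(x,b))=g(x,b_0)$ on $[\tfrac{k_0}n,\tfrac{k_1}n]$; each reduces to an elementary inequality. Convexity is cleanest: a direct computation gives $\partial_x^2 g(x,b_0)=2b_0\bigl(b_0^2+4x(1-x)\bigr)^{-3/2}>0$. Monotonicity $\partial_x g(x,b_0)>0$ and the bound $g(x,b_0)<1$ follow, after clearing the square root, from $(1-2x)^2<1$ and $g(1,b_0)=1/(1+b_0^2)<1$, both automatic for $0<x<1$ and $b_0>0$. The hypothesis $\partial_x g(x,b_0)<1$ is automatic for $x\le\tfrac12$ (the subtracted term is then $\ge 0$) and, for $x>\tfrac12$, becomes after squaring $(2x-1)^2<b_0^2$, i.e.\ exactly the assumption $b_0>2x-1$. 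Finally $g(\tfrac kn,b_0)<\tfrac k{n+1}$ for $k>1$ and large $n$ follows from the strict gap $x-g(x,b_0)=\tfrac{b_0}{2(1+b_0^2)}\bigl(b_0(2x-1)+\sqrt{b_0^2+4x(1-x)}\bigr)>0$, combined with the small-$x$ expansion $g(x,b_0)=x^2/\bigl(b_0^2(1+b_0^2)\bigr)+O(x^3)$: for $x$ bounded away from $0$ the left side is $\Theta(1)\gg\tfrac x{n+1}$, and for $x=o(1)$ one has $x-g(x,b_0)\sim x\gg\tfrac x{n+1}$.

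With the hypotheses in force, Theorem~\ref{Thm: CloseLinearCase} gives
\[
P(HC^{2004}\ge b)=(1+o(1))\sum_{k=k_0}^{k_1}\Bigl(1-\tfrac{d_k'}{n}+h_{k^*}\bigl(\tfrac{d_k'}{n}\bigr)\Bigr)f_{P(d_k)}(k),
\]
and it remains to pass from $d_k=(n+1)g(\tfrac kn,b_0)$, $d_k'=(n+1)g'(\tfrac kn,b_0)$ to the $n$-versions appearing in the corollary. For the weights, $\tfrac{d_k'}{n}=(1+\tfrac1n)g'(\tfrac kn,b_0)=g'(\tfrac kn,b_0)+O(1/n)$, and since $h_{k^*}$ is Lipschitz (uniformly in $n$ and $k$) on the compact range of arguments where $g'(\cdot,b_0)<1$ with a gap, the factor $1-\tfrac{d_k'}{n}+h_{k^*}(\tfrac{d_k'}{n})$ equals $1-g'(\tfrac kn,b_0)+h_{k^*}(g'(\tfrac kn,b_0))$ up to an additive $O(1/n)$, absorbed into $(1+o(1))$. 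For the Poisson factor, $f_{P((n+1)g)}(k)/f_{P(ng)}(k)=e^{-g(k/n,b_0)}(1+\tfrac1n)^k$, which is $1+o(1)$ uniformly over $k=o(n)$; since a Stirling estimate using $g(x,b_0)<x$ shows $f_{P(d_k)}(k)\le \frac{1}{\sqrt{2\pi k}}e^{-c_0k}$ with $c_0=\phi(1/(1+b_0^2))>0$, where $\phi(\rho)=\rho-1-\log\rho$, the sum is governed by $k$ near the lower endpoint $k_0$, and replacing $(n+1)g$ by $ng$ alters the total by a factor $1+o(1)$. Collecting these reductions yields the displayed identity.

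I expect the main obstacle to be this last step: term by term the pmf's $f_{P((n+1)g(k/n,b_0))}(k)$ and $f_{P(n g(k/n,b_0))}(k)$ differ by the genuinely $O(1)$ (not $1+o(1)$) factor $e^{-g(k/n,b_0)}(1+1/n)^k$ once $k$ is comparable to $n$, so the replacement must be justified at the level of the whole sum via the exponential concentration of the mass at the lower endpoint $k_0$ --- which is a fixed or slowly growing index in the cases of interest. By contrast, the verification of the three analytic hypotheses, although it splits into several cases, is routine once $g$, $g'$ and $g''$ are written in closed form.
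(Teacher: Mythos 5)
Your proposal is correct and takes essentially the same route as the paper, which obtains the corollary simply by specializing Theorem \ref{Thm: CloseLinearCase} to $HC^{2004}$ under $H_0$ (where $D$ is the identity, so $D(g(\tfrac kn,b))=g(\tfrac kn,b_0)$); in fact you supply details the paper leaves implicit, namely the verification of the three hypotheses and the passage from the theorem's $(n+1)$-scaled quantities $d_k,d_k'$ to the $n$-scaled ones in the corollary's display. The only blemishes are cosmetic and harmless: the small-$x$ expansion should read $g(x,b_0)=x^2/b_0^2+O(x^3)$ rather than $x^2/\bigl(b_0^2(1+b_0^2)\bigr)+O(x^3)$, and the squared form of the condition $\partial_x g<1$ for $x>\tfrac12$ is $(2x-1)^2<b_0^2\bigl(b_0^2+4x(1-x)\bigr)$, which is implied by (not literally identical to) the assumption $b_0>2x-1$.
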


The above formula is different from that given in Li and Sigmund  \cite{Li2014higher}.  However, for the HC under $H_0$, both formulas require the threshold $b=O(\sqrt{n})$. Thus, both methods do not get the whole distribution. Furthermore, the accuracy depends on the linear approximation of the $D(g(\frac{k}{n}, b))$ function, which could be far from the truth under general $H_1$. Thus this type of calculation may not be very attractive for calculation statistical power. Meanwhile, numeric results in Section \ref{Sect: NumResults} show that both methods are accurate for calculating small p-values under relatively large threshold $b$.

\section{Numerical results\label{Sect: NumResults}}

In this section we first evidence the accuracy of our methods by comparing the calculations with the Monte-Carlo simulations under various settings of $H_0$ and $H_1$. Then, based on calculation we compare the finite-$n$ performance of the asymptotically optimal tests over various signal patterns. The supremum domain is $\mathcal{R} = \{ 1 \leq i \leq n/2\}$ by default, and will be specified otherwise. The simulations run 5,000 repetitions by default. 

\subsection{Evaluate the accuracy of calculations}
Under $H_0: X_i \overset{\text{i.i.d.}}{\sim}  N(0, 1) , \space i = 1, ..., n$. Figure \ref{fig:comparison} shows the right-tail probability of HC over the threshold $b$. Comparing with simulation (black solid curves), the exact calculation by Theorem \ref{Thm: exact} (cyan dashed curves) has a perfect match. The approximation by Theorem \ref{Thm: general} is accurate over the whole distribution when $n$ is fairly big. The closed-formula calculation methods by Li and Siegmund \cite{Li2014higher} (blue dotted curves) and by Corollary \ref{Corol: HC} (green dashed curves) do not fit the whole distribution curve, but both can provide good approximation for calculation small p-values at large threshold.  

%\lipsum[1-3]
%\afterpage{
\begin{figure}
	\caption{Comparison among different calculations for the distributions of HC over various $n$ value under $H_0: X_i \overset{\text{i.i.d.}}{\sim} N(0, 1)$. Black solid curves: by simulations; cyan dashed curves: by Theorem \ref{Thm: exact} for exact distribution; red dot-dashed curves: by Theorem \ref{Thm: general};  blue dotted curves: by Li and Siegmund \cite{Li2014higher}; green dashed curves: by Corollary \ref{Corol: HC} \label{fig:comparison}.  }
	\begin{center}
		\subfloat{\includegraphics[width=0.5\textwidth,height=2in]{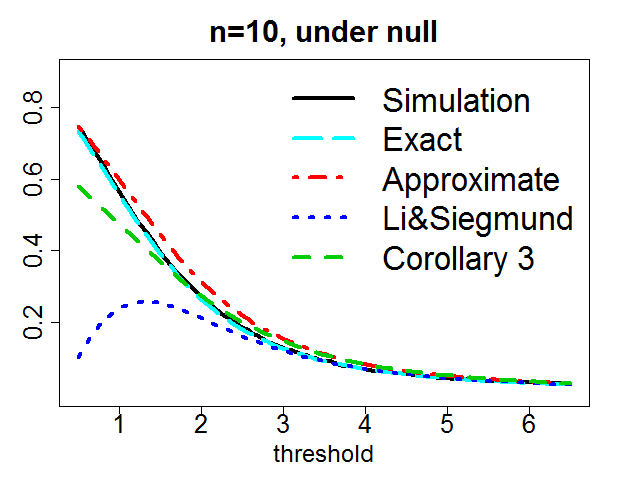}}
		\subfloat{\includegraphics[width=0.5\textwidth,height=2in]{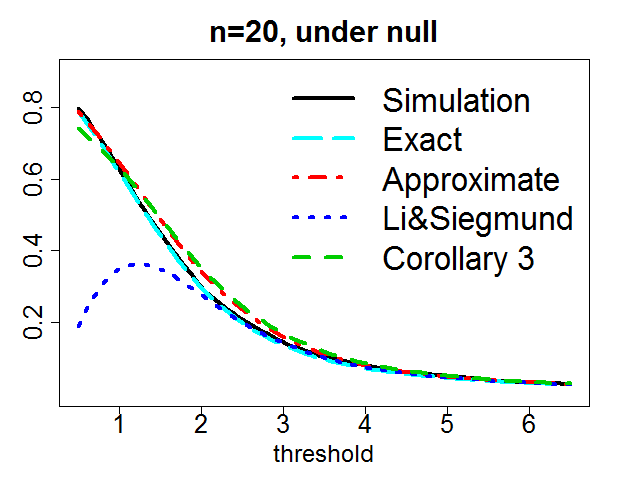}}\\
		\subfloat{\includegraphics[width=0.5\textwidth,height=2in]{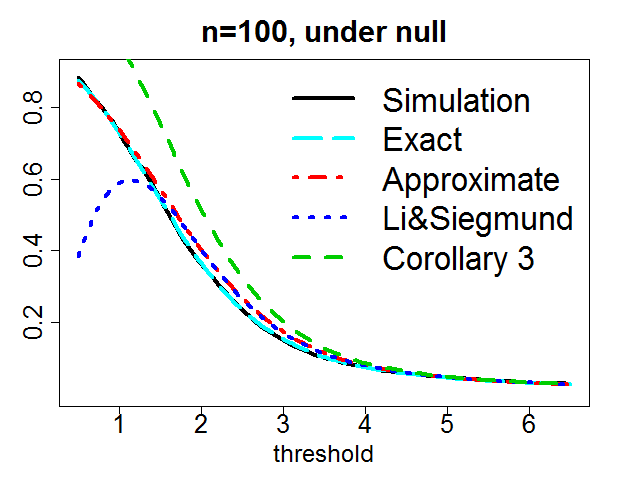}}
		\subfloat{\includegraphics[width=0.5\textwidth,height=2in]{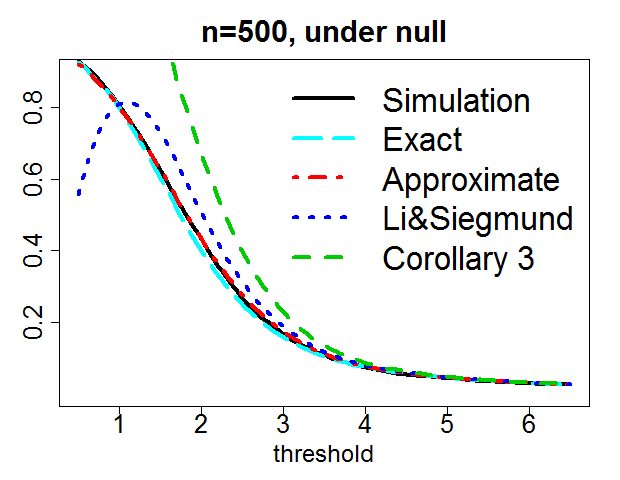}}\\
	\end{center}
\end{figure}
%}
%\lipsum[4-6]

For distribution calculation under the alternative, we considered the null of i.i.d. $N(0,1)$ (on which the input p-values are based) and the alternatives: 
\begin{align*}
	&H_1: X_i \overset{\text{i.i.d.}}{\sim} (1-\epsilon)N(0,1)+\epsilon N(1,1), \text{ or}\\
	&H_1: X_i \overset{\text{i.i.d.}}{\sim} (1-\epsilon)N(0,1)+\epsilon t_{\nu}. 	
\end{align*}
Figure \ref{fig:fitnormal} shows the right-tail probability of HC under the normal mixture alternative with $\mu=1$, $\epsilon=0.1$ (row 1), or under the Student's $t$ distribution with degrees of freedom $\nu=5$ (row 2). In both cases the distribution curves by exact calculation (Theorem \ref{Thm: exact}, cyan dashed curves) and by approximation (Theorem \ref{Thm: general}, red dot-dashed) are close to simulation (black solid curves). As expected, the approximation is more accurate for larger $n$. 
 
\begin{figure}
	\caption{The alternative distributions of HC under $H_0: X_i \overset{\text{i.i.d.}}{\sim} N(0, 1)$ vs. $H_1: X_i\sim 0.9N(0,1)+0.1N(1,1)$ (row 1), or $H_1: X_i\sim 0.5N(0,1)+ 0.5t_{5}$ (row 2). Column 1: $n=10$; column 2: $n=100$. Black solid curves: by simulations; cyan dashed curves: by Theorem \ref{Thm: exact}; red dot-dashed curves: by Theorem \ref{Thm: general}. } \label{fig:fitnormal} 
	\begin{center}
		\subfloat{\includegraphics[width=2.5in,height=2in]{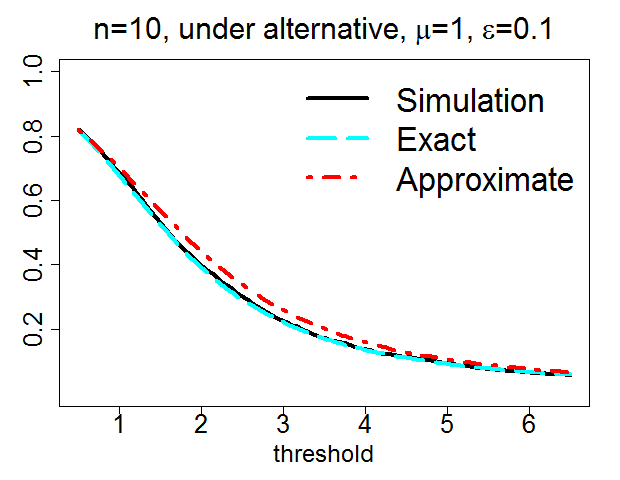}}
		\subfloat{\includegraphics[width=2.5in,height=2in]{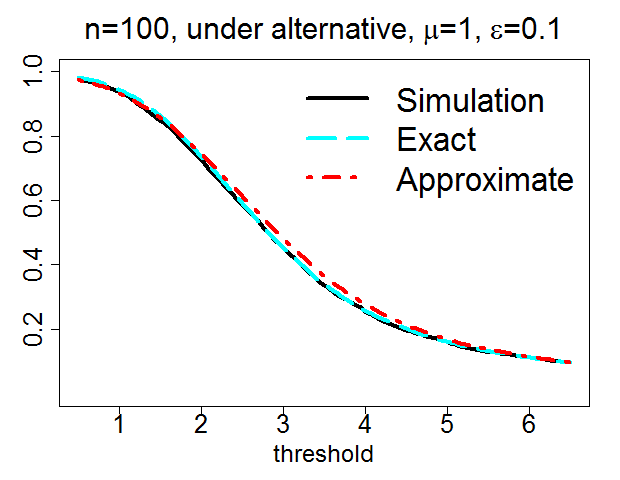}}\\
		%\subfloat{\includegraphics[width=2.5in,height=2in]{alter500.png}}\\
		\subfloat{\includegraphics[width=2.5in,height=2in]{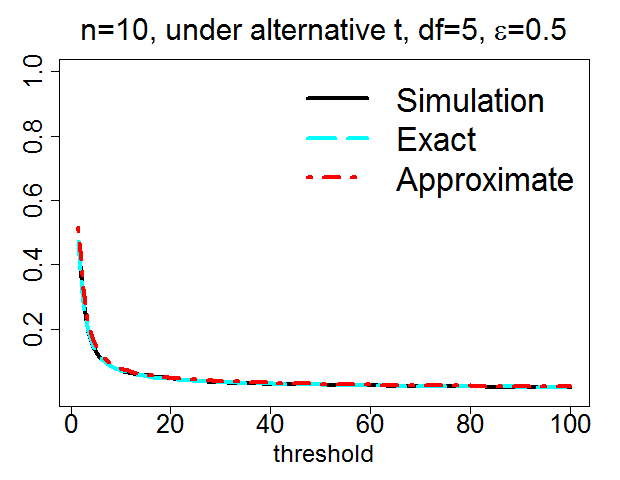}}
		\subfloat{\includegraphics[width=2.5in,height=2in]{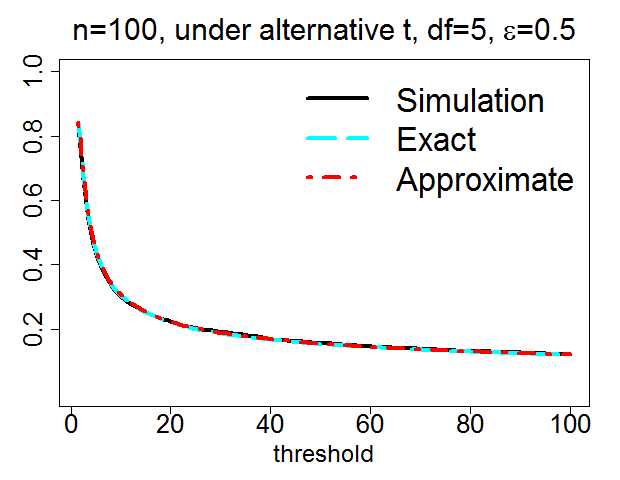}}\\
		%\subfloat{\includegraphics[width=2.5in,height=2in]{alter500tdf5.png}}\\
	\end{center}
\end{figure}

Besides the normal distributions, Theorems \ref{Thm: exact} -- \ref{Thm: general} can handle any given continuous $F_0$ and $F_1$. Here we test on four settings studied in the initial paper of HC \cite{Donoho2004}. The first setting regards a Chi-squared model:
\begin{align*}
	H_0: X_i \overset{\text{i.i.d.}}{\sim} \chi^2_{\nu}(0), \quad \text{   vs.  } \quad
	H_1: X_i \overset{\text{i.i.d.}}{\sim} (1-\epsilon)\chi^2_{\nu}(0)+\epsilon\chi^2_{\nu}(\delta). 
\end{align*}
where $\nu$ is the degree of freedom, $\delta$ is the non-centrality parameter. The second setting is a Student's t mixture model:
\begin{align*}
	H_0: X_i \overset{\text{i.i.d.}}{\sim} t_{\nu}(0), \quad \text{   vs.  } \quad
	H_1: X_i \overset{\text{i.i.d.}}{\sim} (1-\epsilon)t_{\nu}(0)+\epsilon t_{\nu}(\delta).
\end{align*}
The third setting is a chi-squared-exponential mixture model,
\begin{align*}
	H_0: X_i \overset{\text{i.i.d.}}{\sim} \exp (\nu), \quad \text{   vs.  } \quad
	H_1: X_i \overset{\text{i.i.d.}}{\sim} (1-\epsilon)\exp(\nu)+\epsilon\chi^2_{\nu}(\delta).
\end{align*}
The fourth setting concerns a generalized normal distribution (also known as power exponential distribution) model, 
\begin{align*}
	H_0: X_i \overset{\text{i.i.d.}}{\sim} GN_{p}(0,\sigma), \quad \text{   vs.  } \quad
	H_1: X_i \overset{\text{i.i.d.}}{\sim} (1-\epsilon)GN_{p}(0,\sigma) + \epsilon GN_{p}(\mu,\sigma),
\end{align*}
where the probability density function of $GN_{p}(\mu,\sigma)$  is  
\begin{align*}
	\frac{1}{C_{p}}\exp(-\frac{|x-\mu|^p}{p\sigma^p}),\quad C_{p}=2p^{1/p}\Gamma(1+1/p)\sigma. 
\end{align*}
Notice that $GN_{1}(\mu,\sigma)$ is the Laplace distribution and $GN_{2}(\mu,\sigma)$ is $N(\mu,\sigma^2)$. Each row of Figure \ref{fig:fitdifferent} illustrates the alternative distribution of HC under each of the four settings for $n=10$ (left column) and $100$ (right column). Again the distribution calculation is fairly accurate in all cases. 

\begin{figure}
	\caption{The alternative distributions of HC under four non-normal settings for $H_0$ and $H_1$. Column 1: $n=10$; column 2: $n=100$. Black solid curves: by simulations; cyan dashed curves: by Theorem \ref{Thm: exact}; red dot-dashed curves: by Theorem \ref{Thm: general}. }\label{fig:fitdifferent}
	\begin{center}
		\subfloat{\includegraphics[width=2.5in,height=1.7in]{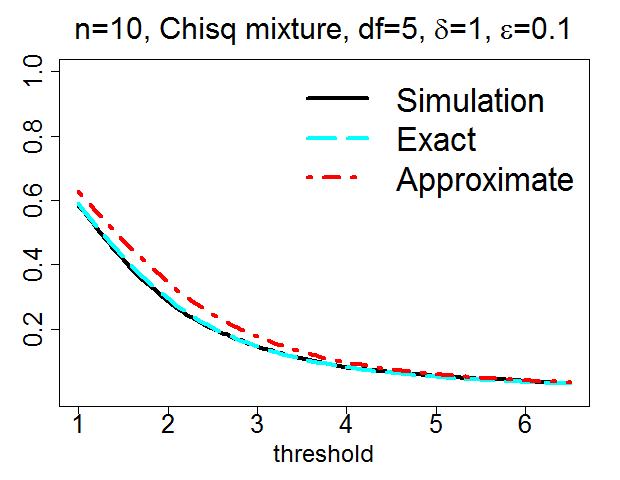}}
		\subfloat{\includegraphics[width=2.5in,height=1.7in]{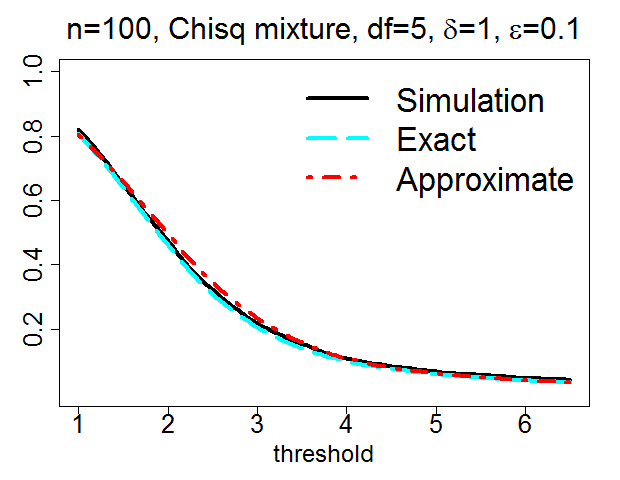}}\\
		%\subfloat{\includegraphics[width=2.5in,height=1.7in]{alter500chisqdf5.png}}\\
		\subfloat{\includegraphics[width=2.5in,height=1.7in]{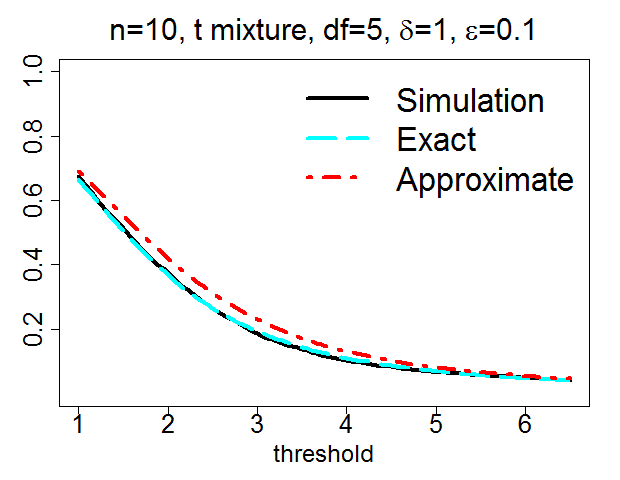}}
		\subfloat{\includegraphics[width=2.5in,height=1.7in]{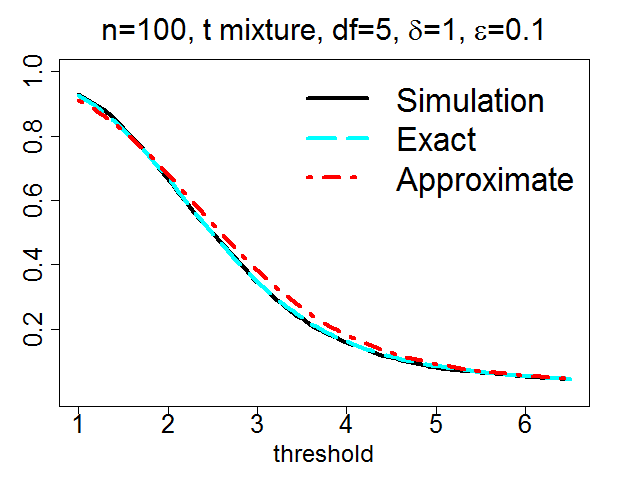}}\\
		%\subfloat{\includegraphics[width=2.5in,height=1.7in]{alter500ttdf5.png}}\\
		\subfloat{\includegraphics[width=2.5in,height=1.7in]{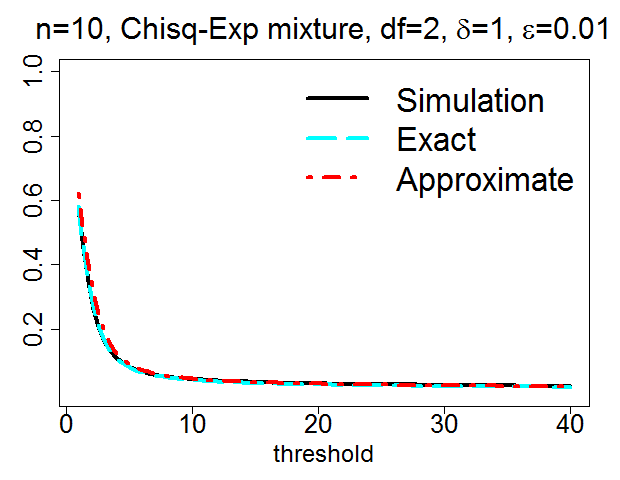}}
		\subfloat{\includegraphics[width=2.5in,height=1.7in]{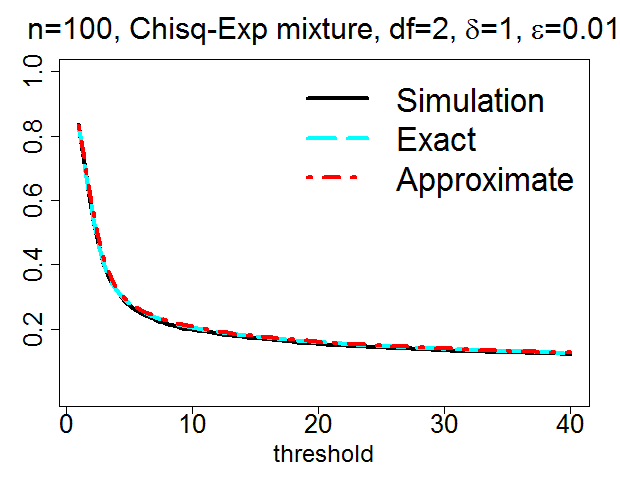}}\\
		%\subfloat{\includegraphics[width=2.5in,height=1.7in]{alter500chisqexpdf2eps001.png}}\\
		\subfloat{\includegraphics[width=2.5in,height=1.7in]{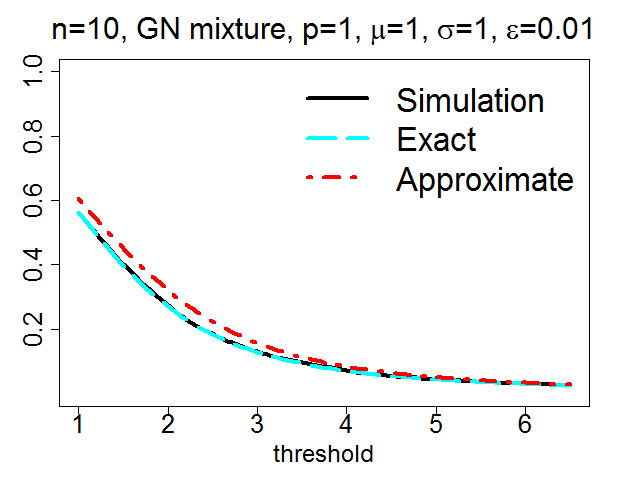}}
		\subfloat{\includegraphics[width=2.5in,height=1.7in]{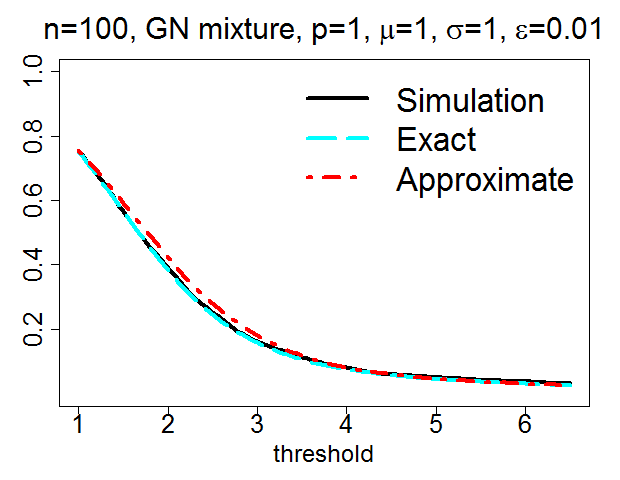}}\\
		%\subfloat{\includegraphics[width=2.5in,height=1.7in]{alter500gnp1.png}}\\
	\end{center}
\end{figure}

For the closed-form calculation formula given by Theorem \ref{Thm: linearCase}, the boundary is assumed linear: $D(g(\frac{i}{n}, b))= a+\lambda k\geq 0 $ in (\ref{equ: P(Sn<b)}). One example is the $KS^+$ in (\ref{equ: KSstat}) under $H_0$. Figure \ref{fig:beta} demonstrates the accuracy of the calculation based on either fixed slope $\lambda=0.5$ or fixed intercept $a=0.5$. Here $k_0=1$, $k_1=n=50$. As the boundary $a+\lambda k$ increases, the probabilities from both calculation and simulation decrease and well-matched as expected. 

\begin{figure}
	\caption{Probability in (\ref{equ: P(Sn<b)}) with boundary $D(g(\frac{i}{n}, b))=a+\lambda k$. Black solid curves: by simulations; red dot-dashed curves: by Theorem \ref{Thm: linearCase}.}\label{fig:beta}
	\begin{center}
		\subfloat{\includegraphics[width=2.5in,height=2in]{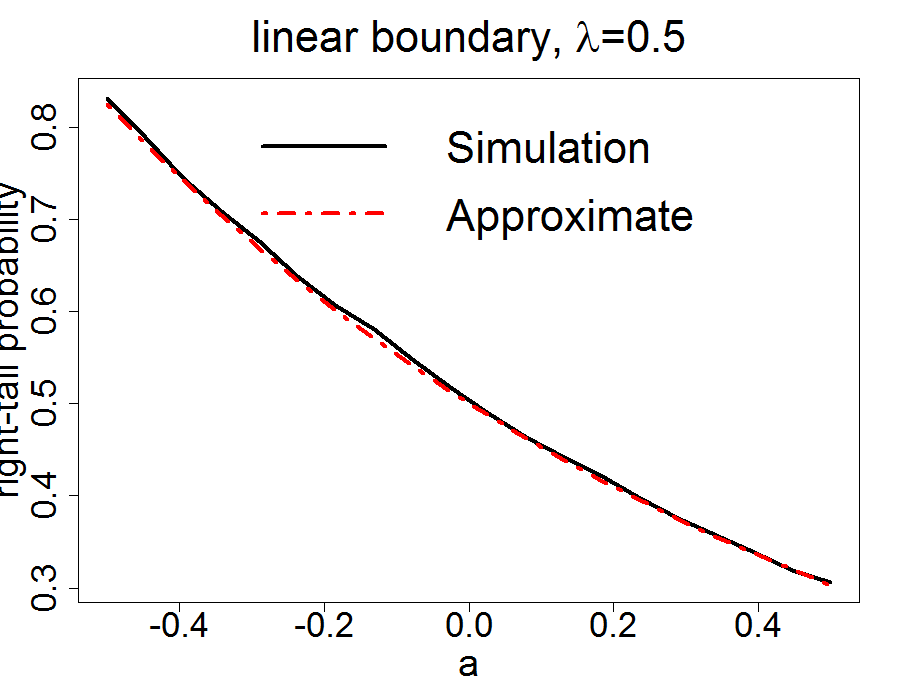}}
		\subfloat{\includegraphics[width=2.5in,height=2in]{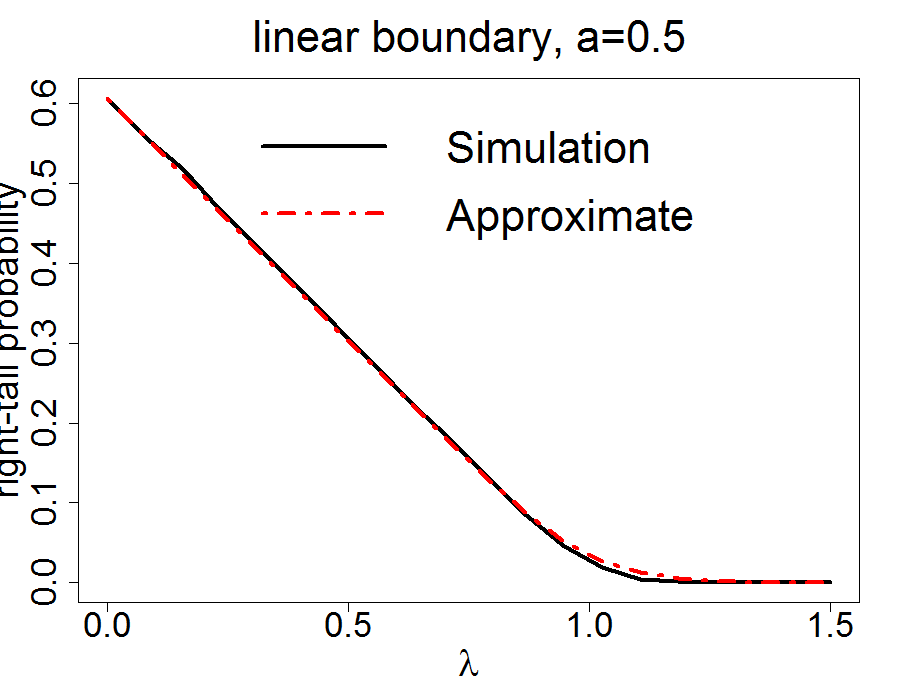}}
	\end{center}
\end{figure}

%Figure \ref{fig:KS} shows the distribution of $KS^+$ in (\ref{equ: KSstat}) for $n=$ 10 and 500 under null hypothesis. Corollary \ref{Corol: KS} is valid for threshold on the x-axis less than $\frac{1}{n}$. 
%\begin{figure}
%	\caption{Distribution of $KS^+$ under null hypothesis.}\label{fig:KS}
%	\begin{center}
%		\subfloat{\includegraphics[width=2.5in,height=2in]{ks10a.png}}%Note:  Truncate at x=0.15
%		\subfloat{\includegraphics[width=2.5in,height=2in]{ks500a.png}} %Note:  Truncate at x=0.0025
%		%Note: Remove title term ", null ... of KS+"
%	\end{center}
%\end{figure}

\subsection{Compare statistical power of asymptotically optimal tests}

All $\phi$-divergence statistics with $s \in [-1, 2]$ possess the same asymptotic optimality property for detecting weak and sparse signals \cite{jager2007goodness}. It is of interest to know the performance of such statistics under finite $n$. Here we study $s=2,1,0,-1$, which correspond to $HC^{2004}$, the Berk-Jones statistic, the reverse Berk-Jones statistic, and $HC^{2008}$, respectively. 

To show the calculation accuracy, we calculated (by Theorem \ref{Thm: exact}) the critical values at the significance levels of $10\%$, $5\%$ and $1\%$. Then at these critical values we got the empirical type I error rates through simulation (10,000 repetitions). Table \ref{tab:pvalue_exact} presents the thresholds by calculation and the correspondingly empirical type I error rates. The closeness of these empirical and nominal type I error rates evidences that the calculation for these tests is accurate. 

\begin{table}
	\caption {Type I error rates: exact calculation vs. simulation. } \label{tab:pvalue_exact} 
	\begin{tabular}{lllllllll}
		\toprule 
		    s & n& \multicolumn{2}{c}{10\%} & \multicolumn{2}{c}{5\%}& \multicolumn{2}{c}{1\%}\\
		      &  & threshold& simulation&threshold& simulation&threshold& simulation\\
		\midrule
		2	&10	&3.357	&0.992  &4.648 &0.049 	&10.088	 &0.010 \\
			&50	&3.507	&0.102  &4.714 &0.050 	&10.102	 &0.011 \\
		 	&100	&3.539	&0.103  &4.723 &0.049 	&10.102	 &0.009 \\
			%&500	&3.563	&0.098  &4.731 &0.049 	&10.102	 &0.011 \\
		1	&10	&2.181 &0.101	 &2.504 &0.050 	&3.110	 &0.011 \\
			&50	&2.408 &0.098	 &2.716 &0.048 	&3.300	 &0.010 \\
			&100	&2.478	&0.104	 &2.780 &0.049 	&3.354	 &0.009\\
			%&500	&2.535 &0.117 &2.821	 &0.060 	&3.375	 &0.012 \\
		0	&10	&1.750 &0.100	 &1.974 &0.049 	&2.390	 &0.011 \\
			&50	&2.040	&0.101	&2.301	 &0.047	&2.803 &0.011 \\
			&100	&2.136	&0.101 &2.402	 &0.051 	&2.915 &0.010 \\
			%&500	&2.220	&0.113 &2.465	 &0.066 	&2.933 &0.014 \\
		-1	&10	&1.618 &0.098	 &1.838 &0.051 	&2.227	 &0.009 \\
			&50	&1.909	&0.099	&2.165  &0.049 	&2.662 &0.009 \\
		 	&100	&2.010 &0.107 &2.271	 &0.052 	&2.777	 &0.010 \\
			%&500	&2.091 &0.118 &2.323 &0.065 	&2.760	 &0.021 \\
		\bottomrule
	\end{tabular}
\end{table}

Now through the calculation based on Theorem \ref{Thm: exact}, we systematically compared the power of these tests under 
\begin{align*}
	H_0: X_i \overset{\text{i.i.d.}}{\sim} N(0, 1), \quad \text{   vs.  } \quad
	H_1: X_i \overset{\text{i.i.d.}}{\sim} (1-\epsilon)N(0, 1)+\epsilon N(\mu, 1). 
\end{align*}
With the type I error rate controlled at 5\%, Figure \ref{fig:power} provides the statistical power at various $\mu$, $n$ and $\epsilon$. There are a few interesting observations. First, $HC^{2004}$ performs well when signals are sparse. However, it seems more relevant to the average number of signals, i.e., $\epsilon n$, rather than the proportion $\epsilon$ itself. For example, at fixed $\epsilon n = 5$ (panels in the first column), $HC^{2004}$ is always the best, while at fixed $\epsilon = 0.05$ (panels in the diagonal), $HC^{2004}$ becomes worse when $n$ increases. 
Second, Smaller $s$ parameter ($s=-1$ for $HC^{2008}$ and $s=0$ for the reverse Berk-Jones) gives better power for denser signals. However, when signals are sparse, they are less powerful because these statistics are insensitive to small p-values as they are weighted by $i/n$ rather than $p_{(i)}$.   
Third, Berk-Jones statistic has a more robust performance over various $\mu$, $n$ and $\epsilon$, which is consistent with the finding of Li and Siegmund \cite{Li2014higher}. 

\begin{figure}
	\caption{Comparison of statistical power (at type I error rate 5\%).}\label{fig:power}
	\subfloat{\includegraphics[width=0.33\textwidth,height=1.8in]{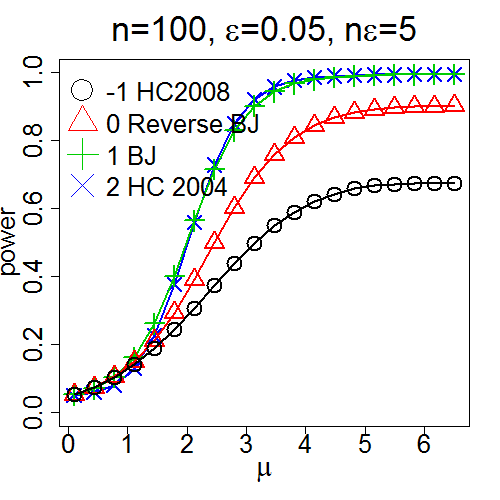}}
	\subfloat{\includegraphics[width=0.33\textwidth,height=1.8in]{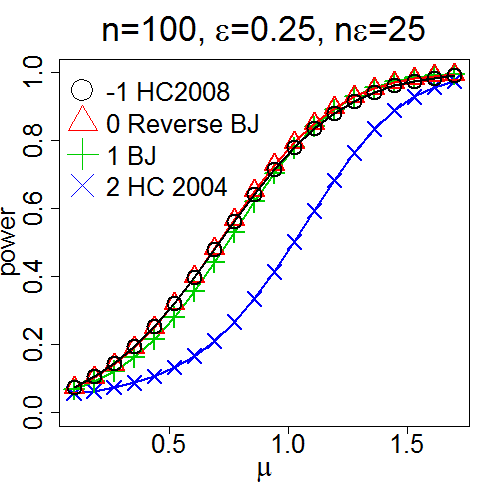}}
	\subfloat{\includegraphics[width=0.33\textwidth,height=1.8in]{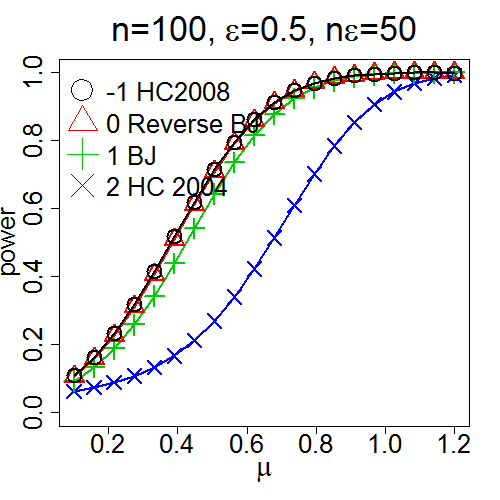}}\\
	\subfloat{\includegraphics[width=0.33\textwidth,height=1.8in]{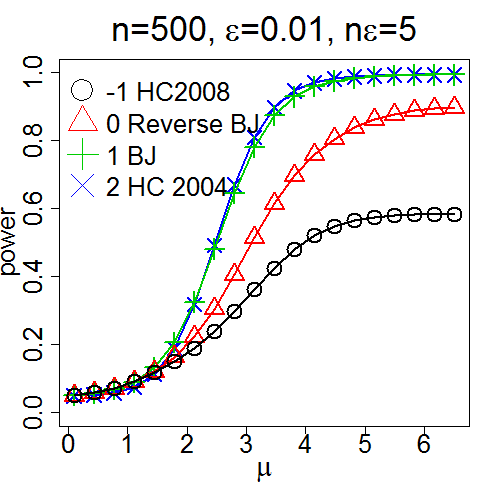}}
	\subfloat{\includegraphics[width=0.33\textwidth,height=1.8in]{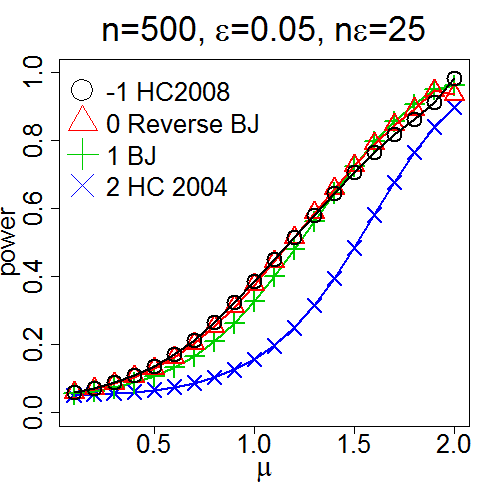}}
	\subfloat{\includegraphics[width=0.33\textwidth,height=1.8in]{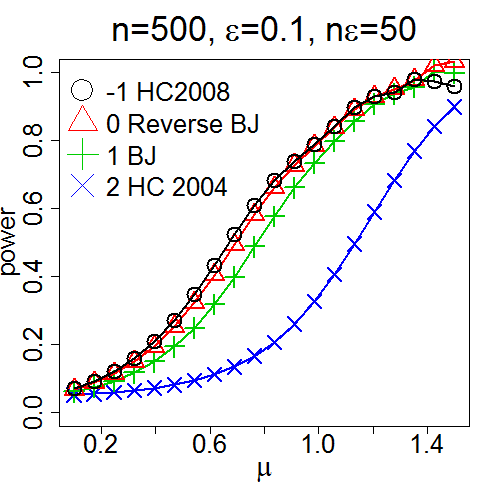}}\\
	\subfloat{\includegraphics[width=0.33\textwidth,height=1.8in]{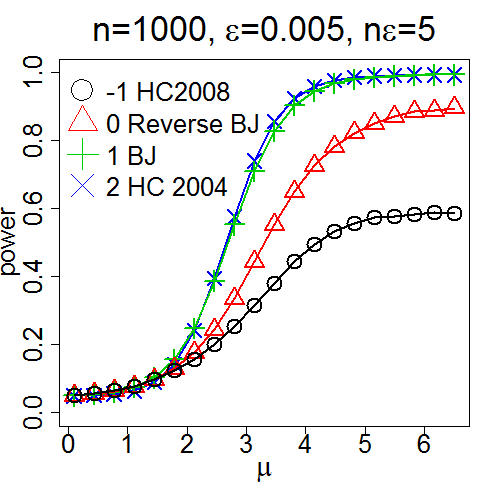}}
	\subfloat{\includegraphics[width=0.33\textwidth,height=1.8in]{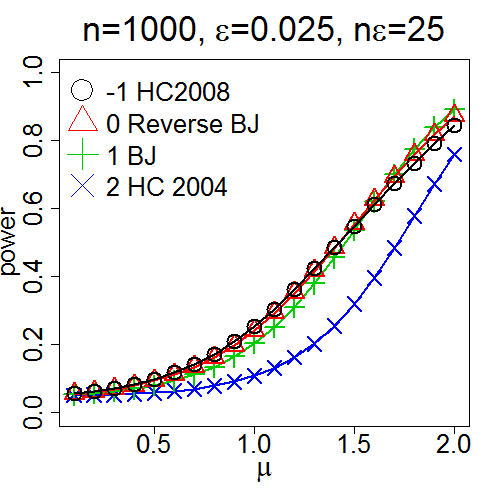}}
	\subfloat{\includegraphics[width=0.33\textwidth,height=1.8in]{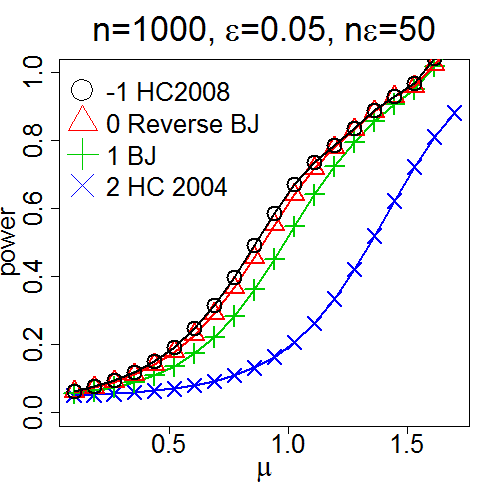}}\\
	\subfloat{\includegraphics[width=0.33\textwidth,height=1.8in]{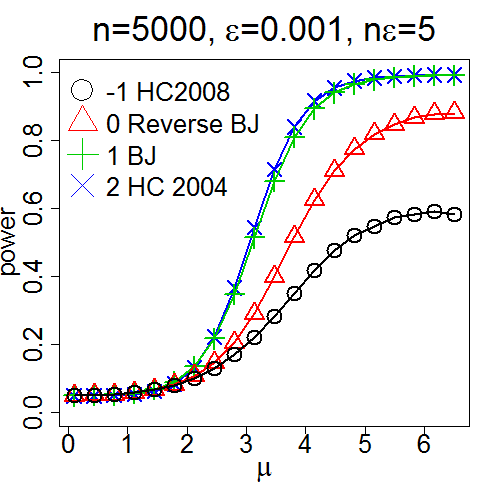}}
	\subfloat{\includegraphics[width=0.33\textwidth,height=1.8in]{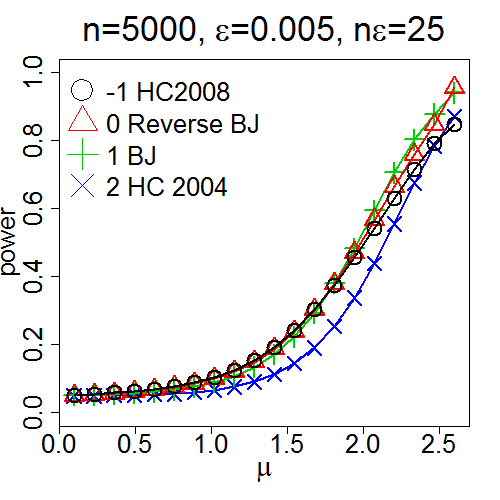}}
	\subfloat{\includegraphics[width=0.33\textwidth,height=1.8in]{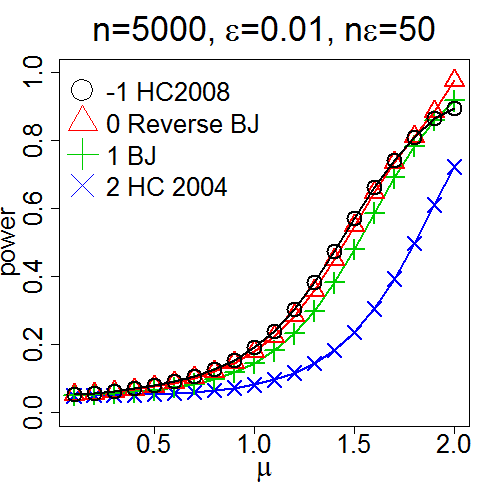}}
\end{figure}

Moreover, it is interesting to see the performance of these methods along the asymptotic detection boundary. This is because the optimal tests are most  valuable for detecting subtle signals around the detection boundary, for which sub-optimal tests will have asymptotically zero power. Here we consider the ARW setting with the detection boundary given in (\ref{equ: detectionBoundary}). Figure \ref{fig:power_detect} shows the power (calculated by Theorem \ref{Thm: exact}) of the four methods over the sparsity parameter $\alpha \in (1/2, 1)$ and $r$ calculated accordingly. It shows that the power curves of $HC^{2004}$ and B-J have the similar pattern, while the performance of $HC^{2008}$ and reverse B-J are alike. In the less sparse case with $\alpha \in (1/2, 3/4)$, B-J is preferred; in the sparser case with $\alpha \in (3/4, 1)$, $HC^{2004}$ outperforms the others.

%Figure \ref{fig:powerapprox_detect} is calculated by approximation Theorem \ref{Thm: general}, and is similar as Figure \ref{fig:power_detect} calculated by exact method Theorem \ref{Thm: exact}. 
%\begin{figure}
%	\caption{Approximated ower at 5\% significance along the detection boundary.}\label{fig:powerapprox_detect}
%	\begin{center}
%		\subfloat{\includegraphics[width=2.5in,height=2in]{detect500approx.png}}
%		\subfloat{\includegraphics[width=2.5in,height=2in]{detect50000approx.png}}
%	\end{center}
%\end{figure}

\begin{figure}
	\caption{Statistical power along the ARW detection boundary (at type I error rate 5\%). }\label{fig:power_detect}
	\begin{center}
		\subfloat{\includegraphics[width=2.5in,height=2in]{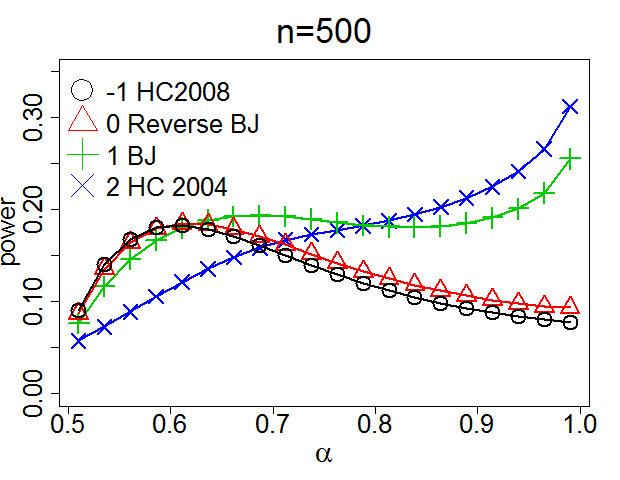}}
		\subfloat{\includegraphics[width=2.5in,height=2in]{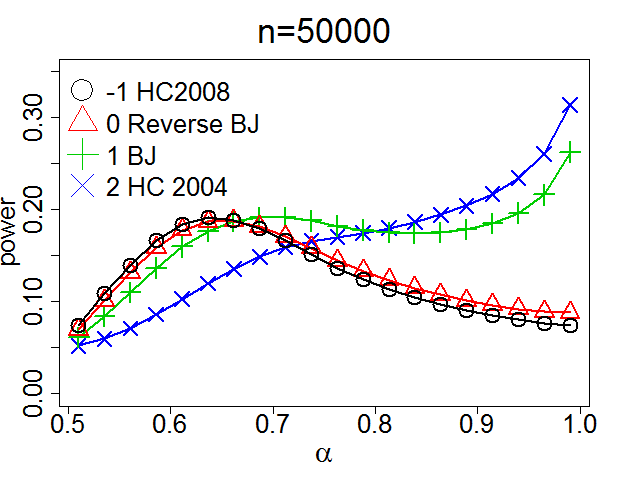}}
	\end{center}
\end{figure}

Last but not least, the supremum domain $\mathcal{R}$ in (\ref{equ: GOFstat}) is quite relevant in constructing the test statistic. Here we compare $HC^{2004}$ under $\mathcal{R} = \{ 1 \leq i \leq n/2\}$ with the modified HC under $\mathcal{R} = \{ 1 < i \leq n/2, p_{(i)} \geq 1/n \}$. 
Figure \ref{fig:power_mhc} shows that the MHC performs poorly when the number of signals is small, whereas it improves the performance when the number of signals increases. When signals are sparse, MHC is less powerful because it tends to exclude signals by considering only those p-values bigger than $1/n$, the latter is a fairly large value when $n$ is small. Thus, in practice when $n$ is not too big, there may be no need to truncate p-values by $1/n$.   

\begin{figure}
	\caption{Power comparison for HC vs. MHC (at type I error rate 5\%).}\label{fig:power_mhc}
	\begin{center}
%%These two graphs are comparison for 5 methods together. 
%		\subfloat{\includegraphics[width=0.33\textwidth,height=1.8in]{1005modifiedexact.png}}
%		\subfloat{\includegraphics[width=0.33\textwidth,height=1.8in]{10025modifiedexact.png}}
%		\subfloat{\includegraphics[width=0.33\textwidth,height=1.8in]{10050modifiedexact.png}}\\
		\subfloat{\includegraphics[width=0.33\textwidth,height=1.8in]{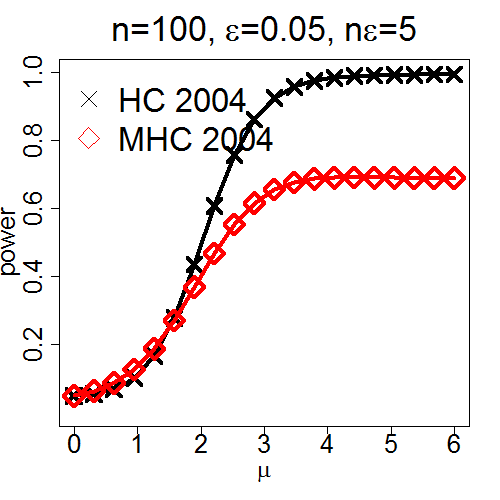}}
		\subfloat{\includegraphics[width=0.33\textwidth,height=1.8in]{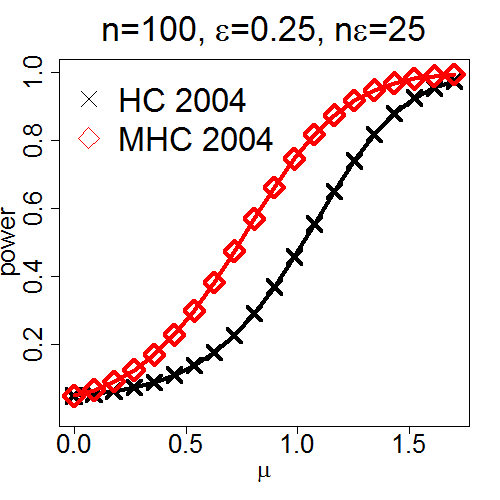}}
		\subfloat{\includegraphics[width=0.33\textwidth,height=1.8in]{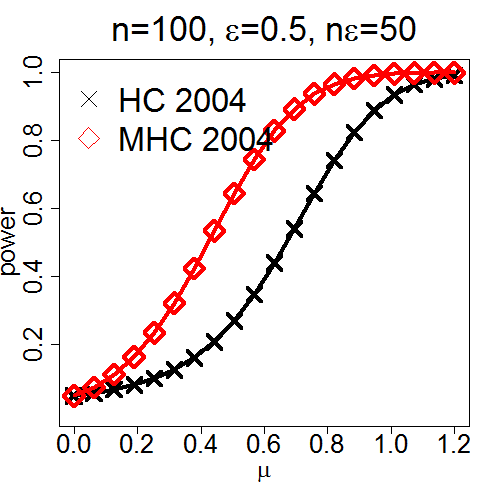}}\\
		\subfloat{\includegraphics[width=0.33\textwidth,height=1.8in]{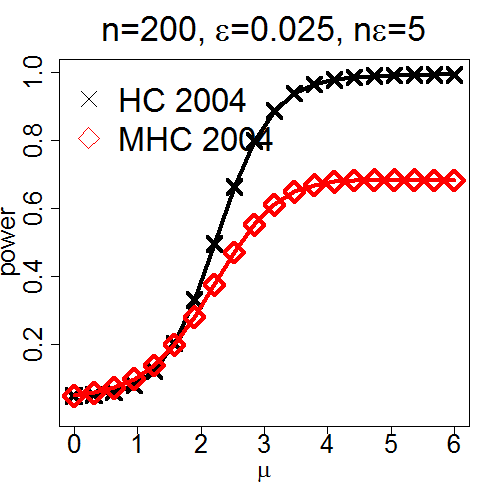}}
		\subfloat{\includegraphics[width=0.33\textwidth,height=1.8in]{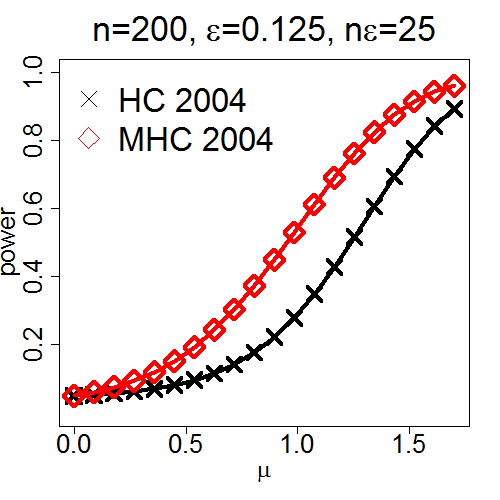}}
		\subfloat{\includegraphics[width=0.33\textwidth,height=1.8in]{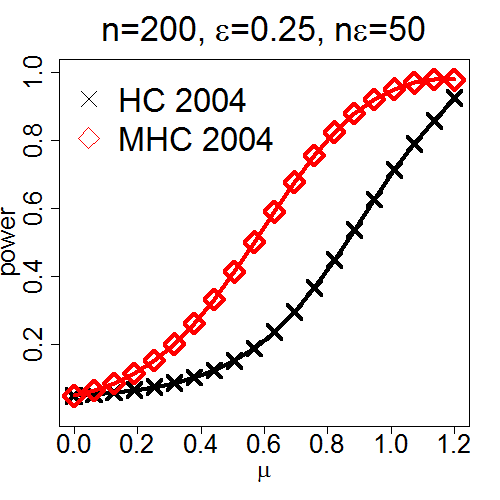}}
	\end{center}
\end{figure}

\section{A Genome-wide Association Study for Crohn's Disease\label{Sect: GWAS}}

According to the genetics of complex diseases, disease-associated markers usually have moderate to small effects\cite{Goldstein2009}. In genome-wise studies that tend to screen as many markers as possible, the number of true disease markers often account for a small proportion of the total candidates. Therefore, it is appealing to apply tests that are asymptotically optimal for detecting rare and weak signals. In this section, based on a logistic regression model, we applied optimal tests to a SNP-set association study at the gene level. Specifically, for the $i$th individual, $i = 1, ..., N$, let $y_i = 1$ (or $0$) indicate the case (or control), let $G_i = (G_{i1}, ..., G_{in})$ denote the genotype vector of $n$ SNPs in a given gene, and let $Z_{i}=(1, Z_{i1}, Z_{i2})$ contain the intercept and the first two principal components of the genotype data, which serve the purpose for controlling potential population structure \cite{Price2006}. The logistic regression model is  $logit(E(y_i | G_i, Z_i)) = G_i^{\prime} \beta + Z_i^{\prime} \gamma$, where $\beta$ and $\gamma$ are the coefficient parameters. A classic marginal-test statistic of the $j$th SNP, adjusted for the non-genetic measures, is \cite{mccullagh1989generalized, schaid2002score}
\[
U_j = \sum_{i=1}^N G_{ij}(y_i-\tilde{y}_i), \text{ }  j = 1, ..., n,
\]
where $\tilde{y}_i$ is the fitted outcome value under $H_0 : \beta=0$, i.e., none SNPs in the gene are associated. Under $H_0$, the vector of statistics $U=(U_1, ..., U_n) \rightsquigarrow N(0, \Sigma)$, where $\Sigma$ can be estimated by 
\[
\hat{\Sigma} = G^{\prime}WG - G^{\prime}WZ(Z^{\prime}WZ)^{-1}Z^{\prime}WG,
\]
where $G=(G_{ij})$ and $Z=(Z_{ij})$ are the corresponding design matrices, and $W=diag(\tilde{y}_i(1-\tilde{y}_i))$. After de-correlation we get test statistics $X = \hat{\Sigma}^{-\frac{1}{2}}U \rightsquigarrow N(0, I_{n \times n})$. Thus the i.i.d. condition of $H_0$ in (\ref{equ: generalHypo}) is reasonably satisfied, and our calculation methods can apply to obtain the p-value of a GOF statistic in (\ref{equ: GOFstat}), which measures the association of the given gene.  

We examined $HC^{2004}$, B-J, reverse B-J, and $HC^{2008}$ (again, they corresponding to the $\phi$-divergence statistics with $s=2, 1, 0, -1$, respectively). The GWAS data from NIDDK-IBDGC (National Institute of Diabetes, Digestive and Kidney Diseases - Inflammatory Bowel  Disease Genetics Consortium) contain 1,145 individuals from non-Jewish population (572 Crohn's disease cases and 573 controls) \cite{Duerr2006}. After typical quality control for genotype data, 308,330 somatic SNPs were grouped into 15,857 genes according to their physical locations. 

Figure \ref{fig:GWAS_Hexact} gives the QQ plots of the gene-association-indicating p-values calculated by Theorem \ref{Thm: exact}. The genomic inflation factors, i.e., the ratios of empirical median of -log(p-values) vs. the expected median under $H_0$, are all close to 1, evidencing that the genome-wide type I errors were well controlled. Among the four tests, the B-J seemed having higher power because it yielded more genes significantly above the red line of the $H_0$-expected p-values. Among the top ranked genes, {\it IL23R} and {\it CARD15} (also known as {\it NOD2}) are well-known Crohn's disease genes \cite{hugot2001association, ogura2001frameshift, Duerr2006}. Gene {\it NPTX2} was top ranked by both $HC^{2004}$ and B-J. It hasn't been reported to be directly associated, but it encodes a member of the family of neuronal petraxins, synaptic proteins that are related to C-reactive protein \cite{maglott2011entrez}, while C-reactive protein is an indicator for the activity level of Crohn's disease \cite{chamouard2006diagnostic}. Furthermore, {\it NPTX2} has an important paralog gene {\it APCS} (www.genecards.org), which is related to arthritis, a disease highly correlated with Crohn's disease \cite{trikudanathan2012diagnosis}. Gene {\it SLC44A4} is also related to the pathophysiology of Crohn's. Defects in this gene can cause sialidosis, a lysosomal storage disease \cite{maglott2011entrez} that results from a deficiency of the enzyme sialidase, the later is important for various cells to defend against infection \cite{james2011andrew}.  Gene {\it BMP2} was identified by B-J, reversed B-J, and $HC^{2008}$. This gene could also be relevant because it is associated with digestive phenotypes, especially colon cancer \cite{yuvaraj2012coli, slattery2012genetic}. Meanwhile, for those top ranked genes, further studies are needed to validate. 

\section{Discussion\label{Sect: Discu}}

This paper provides techniques to calculate the exact and approximated null and alternative distributions of a general family of GOF tests. Thus we can calculate both the p-value and the statistical power of these tests. These calculations are important for both practical applications of GOF tests and for performance comparisons under finite $n$. Comparing with the relevant literature, our methods are novel, accurate, general to broad statistic family and supremum domains. 

To calculate the exact distribution, the result in Theorem \ref{Thm: exact} brings down the computational complexity from $O(n^3)$ to $O(n^2)$ when comparing with corresponding literature methods. In the meanwhile, when $n$ and the search range $k_1-k_0$ are large, the calculation could suffer from the loss of significant digits. In this case, we could truncate the summation to the first 25 - 30 terms, which yields a fairly accurate result and saves computation time too. Moreover, we could also apply proper approximated calculations, for which the accuracy increases together with $n$.

\appendix

\section{Proofs}\label{app}

\subsection{Proofs of the Main Theorems}

\subsubsection{Proof of Theorem \ref{Thm: exact}}
For $k=k_1,...,1$, define 
\[
a_k = \int_{u_{k_1}}^1\frac{n!}{(n-k_1)!}(1-x_{k_1})^{n-k_1}\int_{u_{k_1-1}}^{x_{k_1}}...\int_{u_{k}}^{x_{k+1}}dx_{k}...dx_{k_1-1}dx_{k_1}.
\] 
Then obviously $a_{k_1}=\frac{n!}{(n-k_1+1)!}\bar{F}_{B(1, m)}(u_{k_1})$, and for $k\leq k_1-1$,
	\begin{align*}
		a_k 
		& = \int_{u_{k_1}}^1\frac{n!}{(n-k_1)!}(1-x_{k_1})^{n-k_1}\int_{u_{k_1-1}}^{x_{k_1}}...\int_{u_{k+1}}^{x_{k+2}}x_{k+1}dx_{k+1}...dx_{k_1-1}dx_{k_1}  - u_{k}a_{k+1}\\
		&=\int_{u_{k_1}}^1\frac{n!}{(n-k_1)!}(1-x_{k_1})^{n-k_1}\frac{x_{k_1}^{k_1-k}}{(k_1-k)!}dx_{k_1}-\sum_{j=1}^{k_1-k}\frac{u_{k+j-1}^j}{j!}a_{k+j}\\
		&=\frac{n!}{(n-k_1+k)!}\bar{F}_{B(k_1-k+1,m)}(u_{k_1}) - \sum_{j=1}^{k_1-k}\frac{u_{k+j-1}^j}{j!}a_{k+j}
	\end{align*}
	
Now by Lemma \ref{Lemma: jointBeta},
	\begin{align*}
		&P(S_n\leq b)=P\{U_{(k)}> u_k, k_0\leq k \leq k_1\}\\
		=& \int_{u_{k_1}}^1\frac{n!}{(n-k_1)!}(1-x_{k_1})^{n-k_1}\int_{u_{k_1-1}}^{x_{k_1}}...\int_{u_{k_0}}^{x_{k_0+1}}\frac{x_{k_0}^{k_0-1}}{(k_0-1)!}dx_{k_0}...dx_{k_1-1}dx_{k_1}\\
		=&\int_{u_{k_1}}^1\frac{n!}{(n-k_1)!}(1-x_{k_1})^{n-k_1}\int_{u_{k_1-1}}^{x_{k_1}}...\int_{u_{k_0+1}}^{x_{k_0+2}}\frac{x_{k_0+1}^{k_0}}{k_0!}dx_{k_0+1}...dx_{k_1-1}dx_{k_1} - \frac{u_{k_0}^{k_0}}{k_0!}a_{k_0+1}\\
		=&\int_{u_{k_1}}^1\frac{n!}{(n-k_1)!}(1-x_{k_1})^{n-k_1}\frac{x_{k_1}^{k_1-1}}{(k_1-1)!}dx_{k_1} -\sum_{i=k_0}^{{k_1}-1}\frac{u_{i}^i}{i!}a_{i+1}\\
		=&\bar{F}_{B(k_1,m)}(u_{k_1}) -\sum_{i=k_0}^{{k_1}-1}\frac{u_{i}^i}{i!}a_{i+1}.
	\end{align*}

\subsubsection{Proof of Theorem \ref{Thm: exacttruncate}}

The idea is to use total probability theorem.
	\begin{align*}
		P(S_{n,\mathcal{R}} \leq b) = &\sum_{i=1}^{k_1}\sum_{j=i+1}^{n+1} P\left(\{S_{n,\mathcal{R}} \leq b\}\cap \{\text{exactly } p_{(i)}...p_{(j-1)} \text{ fall in } [\alpha_0,\alpha_1]\}\right) \\
		= &\sum_{i=1}^{k_1}\sum_{j=i+1}^{n+1} P_{ij} 
	\end{align*}
Notice that $1\leq i\leq n$, $i+1\leq j \leq n+1$, we have
	\begin{align*}
		P_{ij}=P(U_{(i-1)}<\beta_0,U_{(i)}\geq u_i,...,U_{(j-1)}\geq u_{j-1},U_{(j-1)}\leq \beta_1,U_{(j)}>\beta_1),
	\end{align*}
where the joint density of $U_{(i-1)},...,U_{(j)}$ is 
	\begin{align*}
		f(x_{i-1},...,x_{j})=\frac{n!}{(i-2)!(n-j)!}x_{i-1}^{i-2}(1-x_j)^{n-j},\quad{}0\leq x_{i-1}\leq...\leq x_{j}\leq 1.
	\end{align*}
Then 
	\begin{align*}
		P_{ij}&=n!\int_{\beta_1}^{1}\frac{(1-x_j)^{n-j}}{(n-j)!}\int_{u_{j-1}}^{\beta_1}\cdots \int_{u_{i}}^{x_{i+1}}\int_{0}^{\beta_0}\frac{x_{i-1}^{i-2}}{(i-2)!}dx_{i-1}dx_i...dx_{j-1}dx_j\\
		&=n!\int_{\beta_1}^{1}\frac{(1-x_j)^{n-j}}{(n-j)!}dx_j\int_{u_{j-1}}^{\beta_1}\cdots \int_{u_{i}}^{x_{i+1}}dx_i...dx_{j-1}\int_{0}^{\beta_0}\frac{x_{i-1}^{i-2}}{(i-2)!}dx_{i-1}\\
		&=c_{ij}n!\int_{u_{j-1}}^{\beta_1}\cdots \int_{u_{i}}^{x_{i+1}}dx_i...dx_{j-1}.
	\end{align*}
Direct calculation similar to the proof of Theorem \ref{Thm: exact} gives the final result.

\subsubsection{Proof of Theorem \ref{Thm: exactgeneral}}

Following the idea in the proof of Theorem \ref{Thm: exacttruncate}, 
	\begin{align*}
		P(S_{n,\mathcal{R}} \leq b) = &\sum_{i=1}^{k_1}\sum_{j=\tilde{i}+1}^{n+1} P\left(\{S_{n,\mathcal{R}} \leq b\}\cap \{\text{exactly } p_{(i)}...p_{(j-1)} \text{ fall in } [\alpha_0,\alpha_1]\}\right) \\
		= &\sum_{i=1}^{k_1}\sum_{j=\tilde{i}+1}^{n+1} P_{ij}.
	\end{align*}
For each feasible pair of $(i,j)$, direct calculation shows $P_{ij}$ can be concisely written as
	\begin{align*}
		P_{ij} &= c_{ij}n! \int_{u_{\tilde{j}-1}}^{\beta_1}\frac{(\beta_1-x_{\tilde{j}-1})^{j-\tilde{j}}}{(j-\tilde{j})!}\int_{u_{\tilde{j}-2}}^{x_{\tilde{j}-1}}\cdots \int_{u_{\tilde{i}}}^{x_{\tilde{i}+1}}\frac{(x_{\tilde{i}}-\tilde{\beta}_0)^{\tilde{i}-i}}{(\tilde{i}-i)!}dx_{\tilde{i}}...dx_{\tilde{j}-2}dx_{\tilde{j}-1}\\
		&= c_{ij}\left(n!\frac{(\beta_1-\tilde{\beta}_0)^{j-i}}{(j-i)!}\bar{F}_{B(\tilde{j}-i,j-\tilde{j}+1)}(\frac{u_{\tilde{j}-1}-\tilde{\beta}_0}{\beta_1-\tilde{\beta}_0}) - \sum_{k=\tilde{i}}^{\tilde{j}-1}\frac{(u_k-\tilde{\beta}_0)^{k-i+1}}{(k-i+1)!}a_j(k+1)\right),
	\end{align*}
where 
	\begin{align*}
		a_{j}(k) &= n! \int_{u_{\tilde{j}-1}}^{\beta_1}\frac{(\beta_1-x_{\tilde{j}-1})^{j-\tilde{j}}}{(j-\tilde{j})!}\int_{u_{\tilde{j}-2}}^{x_{\tilde{j}-1}}\cdots \int_{u_{k}}^{x_{k+1}}dx_{k}...dx_{\tilde{j}-2}dx_{\tilde{j}-1}\\
		&= n!\frac{\beta_1^{j-k}}{(j-k)!}\bar{F}_{B(\tilde{j}-k,j-\tilde{j}+1)}(\frac{u_{\tilde{j}-1}}{\beta_1}) - \sum_{l=1}^{\tilde{j}-k-1}\frac{u_{k+l-1}^{l}}{l!}a_j(k+1).
	\end{align*}

\subsubsection{Proof of Theorem \ref{Thm: general}}

      The main idea of the proofs of Theorem \ref{Thm: general} and \ref{Thm: linearCase} is as follows. Note that $U_{(i)}:=D(p_{(i)})$ defined in (\ref{equ: P(Sn<b)}) follow the same distribution of $\frac{\Gamma_i}{\Gamma_{n+1}}$, where $\Gamma_i = \sum_{j=1}^i \varepsilon_j$, $\varepsilon_j\overset{\text{i.i.d.}}{\sim} Exp(1)$, so that $\Gamma_i \sim Gamma(i, 1)$. Thus we can approximate 
\begin{align*}
	&P\{U_{(k)}> D(g(\frac{k}{n}, b)), \text{for all }k_0\leq k\leq k_1\}. \\
     \approx &P\{\Gamma_{(k)}> (n+1)D(g(\frac{k}{n}, b)), \text{for all }k_0\leq k\leq k_1\}.
\end{align*}
We take advantage of the joint density of $(\Gamma_{k_0},...,\Gamma_{k_1})$, which is given by Lemma \ref{Lemma: jointGamma}, while $\Gamma_{n+1}$ can be approximated by $n+1$ when $n$ is reasonably large.

Accordingly, we apply similar calculation as the proof of Theorem \ref{Thm: exact} except using \ref{Lemma: GammaApprox} and Lemma \ref{Lemma: GammaSurvivalRecursion} instead. 
 
\subsubsection{Proof of Theorem \ref{Thm: linearCase}}
Note that
	\begin{align*}
		  &P(\Gamma_k>a+\lambda k, 1\leq k \leq {k_1})\\
		=&\int_{a+\lambda {k_1}}^{+\infty}e^{-z_{k_1}}\left( \int_{a+\lambda ({k_1}-1)}^{z_{{k_1}}}...\int_{a+\lambda }^{z_{2}}dz_1...dz_{{k_1}-1}\right )dz_{k_1}\\
		  &\text{By lemma \ref{Lemma: Abel-Goncharov}, }\\
		=&\int_{a+\lambda {k_1}}^{+\infty}e^{-z_{k_1}}\frac{\left(z_{k_1}-a-({k_1}-1)\lambda \right)(z_{k_1}-a)^{{k_1}-2}}{({k_1}-1)!}dz_{k_1}\\
		=&\int_{a+\lambda {k_1}}^{+\infty}\left(e^{-z_{k_1}}\frac{(z_{k_1}-a)^{{k_1}-1}}{({k_1}-1)!}-e^{-z_{k_1}}\frac{({k_1}-1)\lambda(z_{k_1}-a)^{{k_1}-2}}{({k_1}-1)!}\right)dz_{k_1}\\
		  &\text{Let $x=z_{k_1}-a$, }\\
		=&e^{-a}\left(\int_{\lambda {k_1}}^{+\infty}e^{-x}\frac{x^{{k_1}-1}}{({k_1}-1)!}dx - \lambda \int_{\lambda k_1}^{+\infty}e^{-x}\frac{x^{{k_1}-2}}{({k_1}-2)!}dx\right)\\
		=&e^{-a}\left[1-P(\Gamma_{k_1}\leq \lambda {k_1})-\lambda \left( 1-P(\Gamma_{{k_1}-1}\leq \lambda {k_1})\right)\right]\\
		=&e^{-a}(1-\lambda +\lambda F_{\Gamma({k_1}-1)}(\lambda {k_1})-F_{\Gamma(k_1)}(\lambda {k_1}))\\
             =&e^{-a}(1-\lambda +h_{k_1}(\lambda)).
	\end{align*}
Thus Theorem \ref{Thm: linearCase} is proved by combining this equation and Lemma \ref{Lemma: GammaApprox}.  
%\subsubsection{Proof of Corollary \ref{Corol: KS}}
%When $b\leq \frac{1}{n}$
%       \begin{align*}
%               P(KS^+\leq b)=&P(p_{(k)}\geq \frac{k}{n}-b,1\leq k \leq {k_1}) \\
%             =&(1+o(1))P(\Gamma_{(k)}\geq -(n+1)b+\frac{(n+1)}{n}k,1\leq k \leq {k_1})
%       \end{align*}
%Theorem \ref{Thm: linearCase} yields the desired result.
\newline
\newline \indent The idea of the proof of Theorem \ref{Thm: CloseLinearCase} is motivated by \cite{Li2014higher}. Instead of directly considering the distribution function, we look at the right-tail probability which can be decomposed into the union of disjoint sets. 

\subsubsection{Proof of Theorem \ref{Thm: CloseLinearCase} and Corollary \ref{Corol: HC}}

Let event $A_{n,k}$ be defined as in Lemma \ref{Lemma: Ank}. They are disjoint and $\{S_n\geq b\}=\bigcup_{k=k_0}^{k_1}A_{n,k}$. In this proof we mainly focus on approximating $P(A_{n,k})$
\newline Let $d_{k}=(n+1)D(g(\frac{k}{n}, b))$, $d_{k}^\prime=(n+1)\frac{d D(g(\frac{k}{n}, b))}{d x}$, $d_{k,max}^\prime=(n+1)\frac{d_{k_1}-d_k}{k_1-k}$. Notice that $D(g(x, b))$ is convex in $x$, so $d_{k+j}>d_k+\frac{d_k^{'}}{n}j$. From Lemma \ref{Lemma: GammaSurvivalRecursion} and Lemma \ref{Lemma: Ank}, we have for $1\leq k\leq k_1-1$,
\begin{align*} 
	P(A_{n,k})&=\frac{d_k^k}{k!}(\Gamma_j>d_{k+j}, 1\leq j \leq k_1 -k)\\
	&\leq \frac{d_k^k}{k!}P(\Gamma_j>d_k+\frac{d_k^{'}}{n}j, 1\leq j \leq k_1 -k)\\
	&=\frac{d_k^k}{k!}e^{-d_k}(1-\frac{d_k^{'}}{n}+h_{k_1-k}(\frac{d_k^{'}}{n}))\\
	&=(1-\frac{d_k^{'}}{n}+h_{k_1-k}(\frac{d_k^{'}}{n}))f_{P(d_k)}(k).
\end{align*}

Next we need to find the lower bound for $P(A_{n,k})$. 
\newline We first consider $k\geq k_1-\sqrt{n}$, here $\sqrt{n}$ is chosen for covenience and the proof works for any $n^{\gamma}$, $0<\gamma<1$. 
Note that, due to the convexity, $d_{k+j}\leq d_k+\frac{d_{k+\sqrt{n}}^{'}}{n}j$, $1\leq j\leq \sqrt{n}$
\begin{align*}
	P(A_{n,k})&\geq \frac{d_k^k}{k!}P(\Gamma_j\geq d_k+\frac{d_{k+\sqrt{n}}^\prime}{n}j, 1\leq j\leq \sqrt{n})\\
	&\geq \frac{d_k^k}{k!}e^{-d_k}(1-\frac{d_{k+\sqrt{n}}^\prime}{n}+h_{\sqrt{n}}(\frac{d_{k+\sqrt{n}}^\prime}{n})\\
	%&=(1+o(1))\frac{d_k^k}{k!}e^{-d_k}(1-\frac{d_k^\prime}{n}+h_k(\frac{d_k^\prime}{n}))\\
	&=(1+o(1))(1-\frac{d_k^{'}}{n}+h_{\sqrt{n}}(\frac{d_k^{'}}{n}))f_{P(d_k)}(k).
\end{align*}
The last equation is due to Theorem \ref{Thm: linearCase} and the continuity of $D(g(x, b))$. We can see $P(A_{n,k})\to (1-\frac{d_k^{'}}{n}+h_{\sqrt{n}}(\frac{d_k^{'}}{n}))f_{P(d_k)}(k)$ uniformly in $k$, $k\geq k_1-\sqrt{n}$.
\newline
\newline We then consider $k\leq k_1-\sqrt{n}$. In this case, the proof is slightly more complicated than the first case, however the idea is similar. 
\newline Let $p_{n,k}(y)=P(\Gamma_j>d_{k+j}-d_k+y, 1\leq j \leq k_1 -k)$, $0\leq y\leq d_k$, and $f_{\Gamma_{k}}$ be the density function of $\Gamma_{k}$, then $P(A_{n,k})=\int_0^{d_k}p_{n,k}(y)f_{\Gamma_{k}}(d_k-y)dy$.
\newline Similar to the proof in the first case, 
\begin{align*}
	p_{n,k}(y)&\geq P(\Gamma_j\geq y+\frac{d_{k+\sqrt{n}}^\prime}{n}, 1\leq j\leq \sqrt{n})-P(\bigcup_{j=\sqrt{n}}^{k_1-k}\{\Gamma_j\leq y+d_{k+j}-d_k\})\\
	&\geq e^{-y}(1-\frac{d_{k+\sqrt{n}}^\prime}{n}+h_{\sqrt{n}}(\frac{d_{k+\sqrt{n}}^\prime}{n}))-\sum_{j=\sqrt{n}}^{k_1-k} P(\frac{\Gamma_j}{j}\leq \frac{y}{j}+\frac{d_{k,max}^\prime}{n})\\
	&=(1+o(1))e^{-y}(1-\frac{d_k^\prime}{n}+h_{\sqrt{n}}(\frac{d_k^\prime}{n}))-\sum_{j=\sqrt{n}}^{k_1-k} P(\frac{\Gamma_j}{j}\leq \frac{y}{j}+\frac{d_{k,max}^\prime}{n}).
\end{align*}

To prove the residual uniformly in $k$ converges to 0, we need to apply Lemma \ref{Lemma: gammaTail}. For $y$ of some lower order, say $O(\log n)$
\begin{align*}
	\frac{y}{j}+\frac{d_{k,max}^\prime}{n}&\leq \frac{\log n}{\sqrt{n}}+\frac{d_{k,max}^\prime}{n}\\
	&<\frac{\log n}{\sqrt{n}}+\frac{n+1}{n}\sup_{\frac{k_0}{n}\leq x\leq \frac{k_1}{n}}\{\frac{d D(g(x, b))}{d x}\}\\
	&\leq \delta<1\text{\quad when $n$ is large}.
\end{align*}

Then, by Lemma \ref{Lemma: gammaTail}, we have the residual
\begin{align*}
	\text{res}_k\leq \sum_{j=\sqrt{n}}^{k_1-k}e^{-jI(\delta)}\leq \frac{e^{-I(\delta)\sqrt{n}}}{1-e^{-I(\delta)}}\to 0 \text{ exponentially uniformly in k}.
\end{align*}
From
\begin{align*}
	(1+o(1))e^{-y}(1-\frac{d_k^\prime}{n}+h_{\sqrt{n}}(\frac{d_k^\prime}{n}))-res_k\leq p_{n,k}\leq e^{-y}(1-\frac{d_k^\prime}{n}+h_{\sqrt{n}}(\frac{d_k^\prime}{n})),
\end{align*}
we can conclude that 
\begin{align*}
	&p_{n,k}(y)\to e^{-y}(1-\frac{d_k^\prime}{n}+h_{\sqrt{n}}(\frac{d_k^\prime}{n}))\text{ uniformly in k}, \\
	&\text{ with error bound } res_k \leq \frac{e^{-I(\delta)\sqrt{n}}}{1-e^{-I(\delta)}}.
\end{align*}

Let $\alpha$ denote $\log n$ for simplicity, $\alpha>1$, then, for some constant $c>1$,
\begin{equation}
\label{equ: ItoIhat}
	\begin{aligned}
		& \hat{I}_{n,k} =\int_0^{\min\{d_k,c\alpha\}}e^{-y}(1-\frac{d_k^\prime}{n}+h_{\sqrt{n}}(\frac{d_k^\prime}{n}))f_{\Gamma_{k}}(d_k-y)dy\\
		\to&I_{n,k}=\int_0^{\min\{d_k,c\alpha\}}p_{n,k}(y)f_{\Gamma_{k}}(d_k-y)dy\text{\quad uniformly with error bound } \leq \frac{ne^{-I(\delta)\sqrt{n}}}{1-e^{-I(\delta)}}.
	\end{aligned}
\end{equation}
When $d_k>c\alpha$, $d_k-y<d_k<k-1$, $f_{\Gamma_k}(d_k-y)$ is decreasing in $y$
\begin{align*}
	\frac{P(A_{n,k})-I_{n,k}}{I_{n,k}}&\leq \frac{(d_k-c\alpha) f_{\Gamma_{k}}(d_k-c\alpha)}{\alpha f_{\Gamma_{k}}(d_k-\alpha)}\\
	&=\frac{(d_k-c\alpha)e^{c\alpha-d_k}(d_k-c\alpha)^{k-1}}{\alpha e^{\alpha-d_k}(d_k-\alpha)^{k-1}}\\
	&=\frac{(d_k-c\alpha)}{\alpha}e^{(c-1)\alpha}(1-\frac{(c-1)\alpha}{d_k-\alpha})^{k-1}\\
	&=\frac{(d_k-c\alpha)}{\alpha}e^{(c-1)\alpha}e^{(k-1)\log (1-\frac{(c-1)\alpha}{d_k-\alpha})}\\
	&\leq\frac{d_k}{\alpha}e^{\alpha(c-1)}e^{-(k-1)(\frac{(c-1)\alpha}{d_k-\alpha})}\\
	&\leq\frac{d_k}{\alpha}e^{\alpha(c-1)(1-\frac{k-1}{d_k})}\\
	&\text{Let $x=\frac{k}{n}$, $y=D(g(x, b))$, then}\\
	&=\frac{(n+1)y}{\alpha}n^{-(c-1)(\frac{nx-1}{(n+1)y}-1)}.
\end{align*}

Notice that $\frac{nx-1}{(n+1)y}>1$, we need $(c-1)(\frac{nx-1}{(n+1)y}-1)>1$ for all $x$, that is $c=\sup_{\frac{2}{n}\leq x\leq \frac{k_1}{n}} \frac{nx-1}{nx-1-(n+1)D(g(x, b))}>1$. Therefore $\frac{P(A_{n,k})-I_{n,k}}{I_{n,k}}\leq \frac{y}{\alpha}n^{-\gamma}$, $\gamma>0$. For $k=1$, the result follows $d_1\to 0$,
\begin{align}
\label{equ: ItoP}
	I_{n,k}\to P(A_{n,k}) \text{ uniformly in k as $n\to \infty$}.
\end{align}

Define $\hat{P}(A_{n,k})=\int_0^{d_k}e^{-y}(1-\frac{d_k^\prime}{n}+h_{\sqrt{n}}(\frac{d_k^\prime}{n}))f_{\Gamma_{k}}(d_k-y)dy$, 
\begin{align*}
	\frac{\hat{P}(A_{n,k})-\hat{I}_{n,k}}{\hat{P}(A_{n,k})}&=1-\frac{\int_0^{\min\{d_k,c\alpha\}}\frac{(d_k-y)^{k-1}}{(k-1)!}dy}{\int_0^{d_k}\frac{(d_k-y)^{k-1}}{(k-1)!}dy}\\
	&=1-(\frac{d_k^k-(d_k-c\alpha)^k}{d_k^k})\\
	&=\left(1-\frac{c\alpha}{d_k}\right)^k\\
	&\leq \exp\left(-c\frac{k}{d_k}\log n\right).
\end{align*}
Since $c,\frac{k}{d_k}>1$, we have 
\begin{align}
\label{equ: IhattoPhat}
	\hat{I}_{n,k}\to \hat{P}(A_{n,k}) \text{ uniformly in k as $n\to \infty$}.
\end{align}

Finally, notice that $e^{-y}f_{\Gamma_{k}}(d_k-y)=e^{-d_k}\frac{(d_k-y)^{k-1}}{(k-1)!}$, then
\begin{equation}
\label{equ: Phat}
	\begin{aligned}
		\hat{P}(A_{n,k})&=(1-\frac{d_k^\prime}{n}+h_{\sqrt{n}}(\frac{d_k^\prime}{n}))e^{-d_k}\int_0^{d_k}\frac{(d_k-y)^{k-1}}{(k-1)!}dy\\
		&=(1-\frac{d_k^{'}}{n}+h_{\sqrt{n}}(\frac{d_k^{'}}{n}))f_{P(d_k)}(k).
	\end{aligned}
\end{equation}

Combine lemma \ref{Lemma: GammaApprox} and equations (\ref{equ: ItoIhat}), (\ref{equ: ItoP}), (\ref{equ: IhattoPhat}), and (\ref{equ: Phat}), we have
\begin{align*}
	P(S_n\geq b)=&(1+o(1))P(\Gamma_k\leq d_k, \text{ for some }k_0\leq k \leq k_1)\\
	=&(1+o(1))\sum_{k=k_0}^{k_1}\hat{P}(A_{n,k})\\
	=&(1+o(1))\sum_{k=k_0}^{k1}(1-\frac{d_k^\prime}{n}+h_{k^*}(\frac{d_k^\prime}{n}))f_{P(d_k)}(k),
\end{align*}
where $k^*=\min\{k_1-k,\sqrt{n}\}$.

\bigskip
To proof Corollary \ref{Corol: HC}, by Theorem \ref{Thm: CloseLinearCase}, we have
\[
	P(HC^{2004}\geq b)=(1+o(1))\sum_{k=k_0}^{k1}\left(1-\frac{n+1}{n}g^\prime(\frac{k}{n}, b_0)+h_{k^*}(\frac{n+1}{n}g^\prime(\frac{k}{n}, b_0))\right)f_{P((n+1)g(\frac{k}{n}, b_0))}\left(k\right).
\]
%Note that by Lemma \ref{Lemma: gammaTail}, $h_{k^*}(\frac{n+1}{n}g_2^\prime(\frac{k}{n}, b_0))\to 0$, $n\to \infty$ exponentially. We have
%\begin{align*}
%	P(HC^{2004}\geq b)=(1+o(1)) \sum_{k=k_0}^{k1}\left(1-\frac{n+1}{n}g_2^\prime(\frac{k}{n}, b_0)\right)F_p\left(k,(n+1)g(\frac{k}{n}, b_0)\right).
%\end{align*}

\bigskip
\subsection{Fundmental Lemmas and Proofs}
\begin{lemma} 
\label{Lemma: jointBeta}
	\emph{}
	Let $U_{(i)}$ be the $i^{th}$ order statistic of \text{i.i.d.} samples from Uniform(0,1),  $1\leq k_0<k_1\leq n$.
      Then the joint density of $(U_{(k_0)},..., U_{(k_1)})$ is
	\begin{align*}
		&f_{(U_{(k_0)},..., U_{(k_1)})}(z_{k_0},...,z_{k_1})=\frac{n!}{(n-k_1)!(k_0-1)!}z_{k_0}^{k_0-1}(1-z_{k_1})^{n-k_1}, 0<z_{k_0}<...<z_{k_1}<1.
	\end{align*}
\end{lemma}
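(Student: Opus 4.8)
The plan is to obtain this joint density by marginalizing the well-known density of the full order-statistic vector. Recall that if $U_1,\dots,U_n$ are i.i.d.\ $\mathrm{Uniform}(0,1)$, then $(U_{(1)},\dots,U_{(n)})$ has density $n!$ on the open simplex $\{0<z_1<\dots<z_n<1\}$ and $0$ elsewhere; this holds because the $n!$ orderings of the sample are equally likely and each contributes the constant density $1$ of the underlying i.i.d.\ vector. I would take this standard fact as the starting point (it is in the empirical-process references already cited in the paper), possibly with a one-line justification for self-containedness.

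The main step is then to integrate out the ``low'' coordinates $z_1,\dots,z_{k_0-1}$ and the ``high'' coordinates $z_{k_1+1},\dots,z_n$, leaving the block of indices from $k_0$ to $k_1$ untouched. For the low block, $\int_{0<z_1<\dots<z_{k_0-1}<z_{k_0}}dz_1\cdots dz_{k_0-1}=z_{k_0}^{\,k_0-1}/(k_0-1)!$, the volume of a scaled simplex; for the high block, $\int_{z_{k_1}<z_{k_1+1}<\dots<z_n<1}dz_{k_1+1}\cdots dz_n=(1-z_{k_1})^{\,n-k_1}/(n-k_1)!$, e.g.\ via the substitution $w_j=1-z_j$. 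Multiplying these two factors by the constant $n!$ and observing that the surviving constraint on the remaining variables is exactly $0<z_{k_0}<\dots<z_{k_1}<1$ yields $\frac{n!}{(n-k_1)!(k_0-1)!}\,z_{k_0}^{\,k_0-1}(1-z_{k_1})^{\,n-k_1}$, which is the claim.

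As a sanity check I would also note the equivalent direct multinomial argument: for $U_{(k_0)}\approx z_{k_0},\dots,U_{(k_1)}\approx z_{k_1}$ we need exactly $k_0-1$ of the $n$ samples below $z_{k_0}$, one sample at each location $z_{k_0},\dots,z_{k_1}$ with none strictly between consecutive ones, and $n-k_1$ samples above $z_{k_1}$; the number of ways to assign the samples to these roles is the multinomial coefficient $n!/((k_0-1)!\,1!\cdots 1!\,(n-k_1)!)$, and the associated probabilities contribute $z_{k_0}^{\,k_0-1}$, $(1-z_{k_1})^{\,n-k_1}$, and factor $1$ for each intermediate gap since the common density is uniform.

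There is no real obstacle here; the only points requiring care are the bookkeeping of which coordinates are integrated out together with the correct nested ranges, and — if a fully self-contained write-up is desired — the short justification of the $n!$ density of the complete order-statistic vector. Everything else is elementary calculus on simplices.
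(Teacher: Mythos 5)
Your proof is correct. The paper does not actually derive this lemma; it simply cites it as formula (2.2.2) of David and Nagaraja's \emph{Order Statistics}, which is exactly the standard density of a set of order statistics specialized to the consecutive block $k_0,\dots,k_1$ (all intermediate gap exponents vanish, leaving only the $z_{k_0}^{k_0-1}$ and $(1-z_{k_1})^{n-k_1}$ factors). Your route --- start from the density $n!$ of the full order-statistic vector on the simplex and integrate out the low block to get $z_{k_0}^{k_0-1}/(k_0-1)!$ and the high block to get $(1-z_{k_1})^{n-k_1}/(n-k_1)!$ --- is a clean, self-contained derivation of that cited formula, and the multinomial counting argument you add is the usual heuristic behind the general formula, so it serves well as a cross-check even though it is not a rigorous proof on its own. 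The only bookkeeping worth noting is the edge cases $k_0=1$ and $k_1=n$, where the corresponding block of integrated-out variables is empty and the factor degenerates to $1$ (consistent with $0!=1$); your computation covers these without modification. In short, you prove what the paper merely quotes, which buys self-containedness at the cost of a few lines of elementary simplex integration.
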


\begin{proof}
	Standard result from formula (2.2.2) of the book Order Statistics\cite{david2003order}. 
\end{proof}

\begin{lemma} 
\label{Lemma: jointGamma}
	\emph{}
	Let $\varepsilon_i$ be \text{i.i.d.} exponential random variables with parameter 1. $\Gamma_k=\sum_{i=1}^{k}\varepsilon_i$, $1\leq k_0 <  k_1\leq n$. 
      Then the joint density of $(\Gamma_{k_0},..., \Gamma_{k_1})$
	\begin{align*}
		&f(z_{k_0},...,z_{k_1})=\frac{z_{k_0}^{k_0-1}}{(k_0-1)!}e^{{-z_{k_1}}}, 0<z_{k_0}<...<z_{k_1}<\infty.
	\end{align*}

	Specially, when $k_0=1$, the joint density is
	\begin{align*}
		&f(z_{1},...,z_{k_1})=e^{{-z_{k_1}}}, 0<z_{1}<...<z_{k_1}<\infty.
	\end{align*}
\end{lemma}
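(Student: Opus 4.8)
The plan is to start from the known joint law of the i.i.d.\ building blocks and push it forward by an elementary linear change of variables, then marginalize. First I would write down the joint density of $(\varepsilon_1,\dots,\varepsilon_{k_1})$, namely $\exp(-\sum_{i=1}^{k_1}\varepsilon_i)$ on $(0,\infty)^{k_1}$, and observe that the map $(\varepsilon_1,\dots,\varepsilon_{k_1})\mapsto(\Gamma_1,\dots,\Gamma_{k_1})$ is linear with a lower-triangular matrix having ones on the diagonal, so its Jacobian determinant equals $1$. Since $\sum_{i=1}^{k_1}\varepsilon_i=\Gamma_{k_1}$ and the image of the positive orthant under this map is the ordered chamber $\{0<z_1<\dots<z_{k_1}\}$, the joint density of $(\Gamma_1,\dots,\Gamma_{k_1})$ is $e^{-z_{k_1}}$ on that chamber and $0$ elsewhere.

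Next I would obtain the density of $(\Gamma_{k_0},\dots,\Gamma_{k_1})$ by integrating out $z_1,\dots,z_{k_0-1}$ from this joint density. For fixed $z_{k_0}$ the remaining variables range over the set $\{0<z_1<\dots<z_{k_0-1}<z_{k_0}\}$, a scaled standard simplex of dimension $k_0-1$ whose Lebesgue volume is $z_{k_0}^{k_0-1}/(k_0-1)!$; since the integrand $e^{-z_{k_1}}$ does not depend on $z_1,\dots,z_{k_0-1}$, the integral is simply $z_{k_0}^{k_0-1}e^{-z_{k_1}}/(k_0-1)!$ on $\{0<z_{k_0}<\dots<z_{k_1}\}$, which is the claimed formula. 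The special case $k_0=1$ is then immediate, since there is nothing to integrate out and $z_1^{0}/0!=1$.

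An equally short alternative, which I would mention for robustness, is to use that the increments $\Gamma_{k_0},\varepsilon_{k_0+1},\dots,\varepsilon_{k_1}$ are jointly independent with $\Gamma_{k_0}\sim\mathrm{Gamma}(k_0,1)$ (density $z^{k_0-1}e^{-z}/(k_0-1)!$) and each $\varepsilon_i\sim\mathrm{Exp}(1)$; applying the same triangular change of variables (Jacobian $1$) to pass from $(\Gamma_{k_0},\varepsilon_{k_0+1},\dots,\varepsilon_{k_1})$ to $(\Gamma_{k_0},\Gamma_{k_0+1},\dots,\Gamma_{k_1})$ and telescoping the product of exponentials again yields $z_{k_0}^{k_0-1}e^{-z_{k_1}}/(k_0-1)!$. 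There is no genuine obstacle in either route; the only points that warrant a line of care are identifying the support correctly as the ordered chamber rather than the full orthant, and recalling the volume $z^{m}/m!$ of the simplex $\{0<z_1<\dots<z_m<z\}$.
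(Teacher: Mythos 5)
Your proof is correct and follows essentially the same route the paper takes, namely the standard triangular change of variables with unit Jacobian giving the density $e^{-z_{k_1}}$ on the ordered chamber, which is exactly the deduction of Mathai and Moschopoulos (Theorem 1.1) that the paper cites for $k_0=1$. Your explicit handling of general $k_0$ — marginalizing $z_1,\dots,z_{k_0-1}$ over the simplex of volume $z_{k_0}^{k_0-1}/(k_0-1)!$, or equivalently using the independence of $\Gamma_{k_0}$ from the later increments — is precisely the extension the paper asserts without writing out, so your argument is simply a self-contained version of the paper's.
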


\begin{proof}
	The proof follows the deduction process shown in Theorem 1.1 by Mathai and Moschopoulos \cite{mathai1991multivariate}. Note that their theorem only considers the case of $k_0=1$, but the deduction idea can be applied to general $k_0$. 
\end{proof}

\begin{lemma}
\label{Lemma: gammaTail}
	Let $c<1$, $I(c)=c-1-\log(c)>0$, $\Gamma_n=\sum_{i=1}^{n}\epsilon_i$, where $\epsilon_i$ are i.i.d exponential distributed with parameter 1, then
	\begin{align*}
		P(\frac{\Gamma_n}{n}\leq c)\leq e^{-nI(c)}.
	\end{align*}
\end{lemma}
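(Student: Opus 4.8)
The plan is to apply the Chernoff bounding technique (an exponential Markov inequality) to the lower tail of $\Gamma_n$. First I would fix a tilting parameter $t > 0$ and note that $\{\Gamma_n/n \le c\} = \{e^{-t\Gamma_n} \ge e^{-tnc}\}$, so that Markov's inequality together with the independence of the $\epsilon_i$ gives
\[
P\left(\frac{\Gamma_n}{n} \le c\right) \le e^{tnc}\, E\!\left[e^{-t\Gamma_n}\right] = e^{tnc}\left(E\!\left[e^{-t\epsilon_1}\right]\right)^n.
\]

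Next I would compute the Laplace transform of a unit exponential: for $t > -1$,
\[
E\!\left[e^{-t\epsilon_1}\right] = \int_0^\infty e^{-(1+t)x}\,dx = \frac{1}{1+t},
\]
so that for every $t > 0$,
\[
P\left(\frac{\Gamma_n}{n} \le c\right) \le \exp\{\, n\,(tc - \log(1+t))\,\}.
\]

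Then I would optimize the exponent $\psi(t) := tc - \log(1+t)$ over $t > 0$. Since $\psi'(t) = c - (1+t)^{-1}$ vanishes exactly at $t^\star = 1/c - 1$, which is strictly positive because $0 < c < 1$, and $\psi$ is convex, $t^\star$ is the global minimizer on $(0,\infty)$. Substituting,
\[
\psi(t^\star) = \left(\tfrac{1}{c} - 1\right)c - \log\tfrac{1}{c} = 1 - c + \log c = -\big(c - 1 - \log c\big) = -I(c),
\]
which yields the claimed inequality $P(\Gamma_n/n \le c) \le e^{-nI(c)}$.

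The argument is essentially routine; the only points that need care are that the optimal tilt $t^\star$ lies in the region where the Laplace transform is finite (it does, since $c < 1$ forces $t^\star > 0 > -1$) and, separately, that $I(c) > 0$ so the bound is nontrivial. The latter follows because $I(1) = 0$ while $I'(c) = 1 - 1/c < 0$ for $c \in (0,1)$, so $I$ is strictly decreasing on $(0,1)$ and hence positive there. There is no genuine obstacle beyond this bookkeeping.
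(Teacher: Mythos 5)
Your proof is correct, and at its core it is the same exponential-tilting computation as the paper's: the optimal tilt $t^{\star}=1/c-1$ and the exponent $tc-\log(1+t)$ evaluated there are exactly the Legendre transform $\Lambda^{*}(-c)=c-1-\log c=I(c)$ that the paper computes from $\Lambda(t)=-\log(1+t)$. The difference is procedural but not trivial: the paper cites Cram\'er's theorem, which in its standard form is an asymptotic statement ($\limsup_n \tfrac1n\log P \le -\inf\Lambda^{*}$), whereas the lemma asserts a bound valid for every finite $n$; your direct Markov--Chernoff argument, $P(\Gamma_n/n\le c)\le e^{tnc}\bigl(E[e^{-t\epsilon_1}]\bigr)^{n}$ followed by optimization over $t>0$, delivers precisely that non-asymptotic inequality and is self-contained. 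So your route is slightly more careful where it matters for how the lemma is actually used (finite-$n$, uniform-in-$k$ error bounds in the proof of Theorem \ref{Thm: CloseLinearCase}), at the cost of writing out two lines of calculus that the paper delegates to a named theorem. One small bookkeeping point: your optimization implicitly assumes $0<c<1$ (so that $t^{\star}>0$ and $\log c$ is defined), which is also implicit in the lemma's statement since $I(c)$ involves $\log c$; for $c\le 0$ the probability is zero and the claim is vacuous.
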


\begin{proof}
	Let $\epsilon_i$ be $\exp(1)$ distributed. The log moment generating function of $-\epsilon_i$ is $\Lambda(t)=-\log (1+t)$, $t>-1$. Then the convex rate function is
	\begin{align*}
		\Lambda^*(x)
		&=\sup_{t>-1} tx-\Lambda(t)\\
		&=-x-1-\log(-x).
	\end{align*}
	By Cramer's theorem, let $x=-c$, we have desired result.
\end{proof}

\begin{lemma}
\label{Lemma: Abel-Goncharov}
	\emph{(Abel-Goncharov Polynomial)}
	\begin{align*}
		\int_{a+\lambda ({k_1}-1)}^{z_{{k_1}}}...\int_{a+\lambda }^{z_{2}}dz_1...dz_{{k_1}-1}=\frac{\left(z_{k_1}-a-\lambda ({k_1}-1)\right)(z_{k_1}-a)^{{k_1}-2}}{({k_1}-1)!}.
	\end{align*}
\end{lemma}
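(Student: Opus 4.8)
The plan is to prove this identity by induction on $k_1$, exploiting the recursive structure of the iterated integral: carrying out the integrations over $z_1,\dots,z_{k_1-2}$ first reproduces exactly the same expression with $k_1$ replaced by $k_1-1$ and with the free parameter now being the outermost variable $z_{k_1-1}$. Thus if $I_{m}(z)$ denotes the left-hand side for index $m$, regarded as a function of its free upper endpoint $z$, one has the one-dimensional recursion
\[
I_{k_1}(z)=\int_{a+\lambda(k_1-1)}^{z} I_{k_1-1}(w)\,dw,
\]
so the whole lemma reduces to a single integration once the inductive hypothesis is in hand.

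For the base case I would take $k_1=2$: the left-hand side is $\int_{a+\lambda}^{z_2}dz_1=z_2-a-\lambda$, which equals $\frac{(z_2-a-\lambda)(z_2-a)^0}{1!}$. For the inductive step with $k_1\ge 3$, I would substitute the hypothesis $I_{k_1-1}(w)=\frac{(w-a-\lambda(k_1-2))(w-a)^{k_1-3}}{(k_1-2)!}$ into the recursion, change variables to $u=w-a$, and write the integrand as $\frac{1}{(k_1-2)!}\bigl(u^{k_1-2}-\lambda(k_1-2)u^{k_1-3}\bigr)$. Integrating term by term yields the antiderivative $\frac{1}{(k_1-2)!}\bigl(\frac{u^{k_1-1}}{k_1-1}-\lambda u^{k_1-2}\bigr)$, which, evaluated between $u=(k_1-1)\lambda$ and $u=z-a$ and then factored, produces $\frac{(z-a)^{k_1-2}}{(k_1-1)!}\bigl(z-a-\lambda(k_1-1)\bigr)$, exactly the asserted formula.

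The only point needing care, and the closest thing to an obstacle here, is the boundary term at the lower limit: a priori the evaluation at $u=(k_1-1)\lambda$ contributes the two quantities $\frac{((k_1-1)\lambda)^{k_1-1}}{k_1-1}$ and $\lambda((k_1-1)\lambda)^{k_1-2}$, and one must check that they are equal, so that their difference vanishes. Both equal $(k_1-1)^{k_1-2}\lambda^{k_1-1}$, so the lower limit contributes nothing; this exact cancellation, a consequence of the lower limits $a+\lambda j$ forming an arithmetic progression, is why the answer stays a clean product of two factors rather than accumulating extra lower-order terms.

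As an alternative one could observe that $I_{k_1}$ is, up to the factorial normalization, the Abel-Goncharov polynomial attached to the equally spaced interpolation nodes $a+\lambda j$, and deduce the closed form from the classical Abel-type binomial identity; but the direct induction above is self-contained and is all that is needed here. For completeness one may also record the degenerate case $k_1=1$, where the iterated integral is empty and equal to $1$, matching the right-hand side read with the convention $(z_1-a)^{-1}(z_1-a)=1$; the recursion in the proof of Theorem~\ref{Thm: linearCase} only ever invokes the lemma for $k_1\ge 1$ with $z_{k_1}>a+\lambda(k_1-1)$, so all the integrals appearing are genuine.
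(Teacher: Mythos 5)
Your proof is correct and follows essentially the same route as the paper: induction on $k_1$ with base case $k_1=2$, substituting the inductive hypothesis, splitting the integrand into the two power terms, and observing that the contributions at the lower limit $a+\lambda(k_1-1)$ cancel exactly. The change of variables $u=w-a$ is a cosmetic difference only; the key cancellation you highlight is precisely the one used in the paper's computation.
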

\begin{proof}
	We will prove by induction. 
	\newline When ${k_1}=2$, it's easily shown that
	\begin{align*}
		\int_{a+\lambda }^{z_{2}}dz_1=z_2-a-\lambda =\frac{\left(z_2-a-\lambda (2-1)\right)(z_2-a)^{2-2}}{(2-1)!}.
	\end{align*}
	Assume it's true for ${k_1}=k$,
	\begin{align*}
		\int_{a+\lambda (k-1)}^{z_{k}}...\int_{a+\lambda }^{z_{2}}dz_1...dz_{k-1}=\frac{\left(z_k-a-\lambda (k-1) \right)(z_k-a)^{k-2}}{(k-1)!}.
	\end{align*}
	Then for ${k_1}=k+1$,
	\begin{align*}
		  &\int_{a+\lambda k}^{z_{k+1}}\int_{a+\lambda (k-1)}^{z_{k}}...\int_{a+\lambda }^{z_{2}}dz_1...dz_{k-1}dz_k\\
		=&\int_{a+\lambda k}^{z_{k+1}}\frac{\left(z_k-a-\lambda (k-1)\right)(z_k-a)^{k-2}}{(k-1)!}dz_k\\
		=&\int_{a+\lambda k}^{z_{k+1}}\frac{(z_k-a)^{k-1}}{(k-1)!}d_k-\int_{a+\lambda k}^{z_{k+1}}\frac{\lambda (z_k-a)^{k-2}}{(k-2)!}d_k\\
		=&\frac{(z_{k+1}-a)^k}{k!}-\frac{(\lambda k)^k}{k!}-\frac{\lambda (z_{k+1}-a)^{k-1}}{(k-1)!}+\frac{\lambda (\lambda k)^{k-1}}{(k-1)!}\\
		=&\frac{(z_{k+1}-a)^{k-1}(z_{k+1}-a-\lambda k)}{k!}.
	\end{align*}
	This finished the proof.
\end{proof}

\begin{lemma} 
\label{Lemma: GammaApprox}
	Let $c=(c_{k_0},...,c_{k_1}), 1\leq{k_0} < {k_1}\leq n$ be a finite sequence of increasing numbers. Then given $\epsilon>0$, as $n\to \infty$
	\begin{align*}
		P(\frac{\Gamma_j}{\Gamma_{n+1}}>c_j, {k_0}\leq j \leq {k_1})=(1+O(\epsilon)) P(\Gamma_j>(n+1)c_j, {k_0}\leq j \leq {k_1}).
	\end{align*}
\end{lemma}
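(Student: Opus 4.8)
The plan is to observe that the only difference between the two sides is the random normalizer $\Gamma_{n+1}$ — appearing through $\{\Gamma_j/\Gamma_{n+1}>c_j\}=\{\Gamma_j>c_j\Gamma_{n+1}\}$ — versus its mean $n+1$, and to remove it by a monotone sandwich powered by the concentration $\Gamma_{n+1}\approx n+1$. Concretely, write $A_n:=\{\Gamma_j>c_j\Gamma_{n+1},\ k_0\le j\le k_1\}$ and $\Psi_n(a):=P(\Gamma_j>a(n+1)c_j,\ k_0\le j\le k_1)$, so the right-hand side of the lemma is $\Psi_n(1)$. Fix $\delta>0$ and let $G_\delta:=\{(1-\delta)(n+1)\le\Gamma_{n+1}\le(1+\delta)(n+1)\}$. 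Since every $c_j\ge0$, on $G_\delta$ we have the inclusions $\{\Gamma_j>(1+\delta)(n+1)c_j,\ \forall j\}\cap G_\delta\ \subseteq\ A_n\cap G_\delta\ \subseteq\ \{\Gamma_j>(1-\delta)(n+1)c_j,\ \forall j\}$; taking probabilities and absorbing $G_\delta^c$ gives $\Psi_n(1+\delta)-P(G_\delta^c)\ \le\ P(A_n)\ \le\ \Psi_n(1-\delta)+P(G_\delta^c)$.

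Two inputs then finish it. First, the tail: $\Gamma_{n+1}\sim\mathrm{Gamma}(n+1,1)$, so Lemma~\ref{Lemma: gammaTail} applied to $\Gamma_{n+1}/(n+1)$, together with the matching upper-tail Chernoff bound, gives $P(G_\delta^c)\le 2e^{-nI(\delta)}$ with $I(\delta)>0$; hence $P(G_\delta^c)$ is exponentially small in $n$. Second, one needs the stability estimate $\Psi_n(1\pm\delta)=(1+O(\epsilon))\Psi_n(1)$ for $\delta=\delta(\epsilon)$ small. Here it is cleanest to condition on $\Gamma_{n+1}$ and use that $(\Gamma_{k_0}/\Gamma_{n+1},\dots,\Gamma_{k_1}/\Gamma_{n+1})$ is distributed as the uniform order statistics $(U_{(k_0)},\dots,U_{(k_1)})$ (cf.\ Lemmas~\ref{Lemma: jointBeta} and \ref{Lemma: jointGamma}): writing $\psi_n(s):=P(U_{(j)}>sc_j,\ k_0\le j\le k_1)$, which is continuous and non-increasing in $s\ge0$, one has $P(A_n)=\psi_n(1)$ and $\Psi_n(1)=\int_0^\infty\psi_n\!\big(\tfrac{n+1}{t}\big)f_{\Gamma_{n+1}}(t)\,dt$, and on $G_\delta$ monotonicity gives $\psi_n\!\big(\tfrac1{1-\delta}\big)\le\psi_n\!\big(\tfrac{n+1}{t}\big)\le\psi_n\!\big(\tfrac1{1+\delta}\big)$, so the whole argument reduces to $\psi_n(1/(1\mp\delta))=(1+O(\epsilon))\psi_n(1)$.

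I expect this last comparison to be the real obstacle, and it is a genuine restriction on the thresholds. Since $\psi_n(s)$ is the integral of the uniform order-statistic density over the polytope $\{0<z_{k_0}<\dots<z_{k_1}\}\cap\{z_j>sc_j\ \forall j\}$, its logarithmic derivative in $s$ equals $-\sum_j c_j$ times the density mass carried by the facet $\{z_j=sc_j\}$, divided by $\psi_n(s)$; for this to stay bounded uniformly in $n$ one needs $(n+1)c_j$ to be of the same order as $j$ (equivalently, the $c_j$ to lie near the diagonal $j/(n+1)$), which is exactly the regime in which the lemma is invoked — in Theorems~\ref{Thm: general} and \ref{Thm: CloseLinearCase} the thresholds $c_j=D(g(j/n,b))$ satisfy this by the convexity and slope conditions imposed there. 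Granting that control, $\psi_n(s)=\psi_n(1)\exp(O(|s-1|))$ uniformly, so choosing $\delta$ proportional to $\epsilon$ gives the stability estimate, and combining it with $P(G_\delta^c)=o(\psi_n(1))$ yields $P(A_n)=(1+O(\epsilon))\,\Psi_n(1)$, which is the claim.
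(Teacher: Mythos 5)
Your proof is correct and follows essentially the same route as the paper: sandwich the event $\{\Gamma_j>c_j\Gamma_{n+1}\}$ between the events with thresholds $(1\pm\delta)(n+1)c_j$ on the concentration set for $\Gamma_{n+1}/(n+1)$, kill the complement with the exponential bound of Lemma~\ref{Lemma: gammaTail}, and then invoke a stability (continuity) estimate for the perturbed-threshold probabilities. The only difference is cosmetic: where the paper disposes of that last step with a one-line appeal to the continuity of the multivariate gamma distribution, you condition on $\Gamma_{n+1}$ and argue the $(1+O(\epsilon))$ comparison via the uniform order-statistic representation in the regime $(n+1)c_j\asymp j$ where the lemma is actually used, which is a more explicit justification of the same point.
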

\begin{proof}
	Given $\epsilon>0$, let $L(c;n,\epsilon)$ and $R(c;n,\epsilon)$ be the lower bound and upper bound for $P(\frac{\Gamma_j}{\Gamma_{n+1}}>c_j, {k_0}\leq j \leq {k_1})$,
	\begin{align*}
		L(c;n,\epsilon)=&P(\Gamma_j>(1+\epsilon)(n+1)c_j, {k_0}\leq j \leq {k_1})\\
		-&P(\frac{\Gamma_{n+1}}{n+1}\in [1-\epsilon,1+\epsilon]^c), \\
		R(c;n,\epsilon)=&P(\Gamma_j>(1-\epsilon)(n+1)c_j, {k_0}\leq j \leq {k_1})\\
		+&P(\frac{\Gamma_{n+1}}{n+1}\in [1-\epsilon,1+\epsilon]^c).
	\end{align*}
	By Lemma \ref{Lemma: gammaTail},
	\begin{align*}
		P(\frac{\Gamma_{n+1}}{n+1}\in [1-\epsilon,1+\epsilon]^c)\leq \frac{1}{\epsilon^2}e^{-n\Lambda(1-\epsilon)}\to 0 \text{ exponentially.}
	\end{align*}
	Because of the continuity of the multivariate $gamma$ distribtution, 
	\begin{align*}
		L(c;n,\epsilon)\to &(1+O(\epsilon))P(\Gamma_j>(n+1)c_j, {k_0}\leq j \leq {k_1}),\\
		R(c;n,\epsilon)\to &(1+O(\epsilon))P(\Gamma_j>(n+1)c_j, {k_0}\leq j \leq {k_1}).
	\end{align*}
\end{proof}

\begin{lemma}
%\label{survival}
\label{Lemma: GammaSurvivalRecursion}
\emph{}
	Let $(d_1,...,d_{k_1})$ be a sequence of nondecreasing and nonnegative numbers. 
	$Q_{j}(d_{j+1},...,d_{{k_1}})=P\{\Gamma_k\geq d_{k+j}, 1\leq k\leq {k_1}-j\}$, $0\leq j \leq {k_1}-1$. 
	$\bar{F}_{\Gamma_k}(x)$ is the survival function of Gamma distribution with shape parameter $k$ and scale parameter $1$.  Then for $l=2,3,...,k_1$
	\begin{align*}
		Q_{k_1-l} = \bar{F}_{\Gamma_{l}}(d_{k_1}) - \sum_{j=1}^{l-1}\frac{d_{k_1-l+j}^j}{j!}Q_{k_1-l+j}.
		%&= \sum_{j=0}^{m-i}\frac{d_{m}^j}{j!}e^{-d_{m}}-\sum_{j=1}^{m-i}\frac{a_{j+i-1}^j}{j!}Q_{j+i-1}
	\end{align*}
      with $Q_{k_1-1}=\bar{F}_{\Gamma_{1}}(d_{k_1})$, and, for $k_0\geq 1$, the joint survival probability
      \[
            P\{\Gamma_k\geq d_{k}, k_0\leq k\leq {k_1}\}=\bar{F}_{\Gamma_{k_1}}(d_{k_1}) -\sum_{j=k_0}^{{k_1}-1}\frac{d_{j}^j}{j!}Q_{j}.
      \]
\end{lemma}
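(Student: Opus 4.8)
The plan is to mimic the iterated-integral computation from the proof of Theorem~\ref{Thm: exact}, replacing the Beta joint density (Lemma~\ref{Lemma: jointBeta}) by the Gamma joint density of Lemma~\ref{Lemma: jointGamma}, and to read the recursion directly off the ``peeling'' of the nested integrals from the inside out.

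First I would establish the recursion for $Q_{k_1-l}$. Fix $l\in\{2,\dots,k_1\}$; by definition $Q_{k_1-l}=P\{\Gamma_k\geq d_{k_1-l+k},\ 1\leq k\leq l\}$, which involves only $\Gamma_1,\dots,\Gamma_l$. By Lemma~\ref{Lemma: jointGamma} with $k_0=1$, these have joint density $e^{-z_l}$ on $0<z_1<\dots<z_l<\infty$, and since $(d_j)$ is nondecreasing the constraint set is exactly the nested region, so
\[
Q_{k_1-l}=\int_{d_{k_1}}^{\infty}e^{-z_l}\int_{d_{k_1-1}}^{z_l}\cdots\int_{d_{k_1-l+1}}^{z_2}dz_1\cdots dz_{l-1}\,dz_l .
\]
Now integrate over $z_1$, then $z_2$, and so on. After integrating out $z_1,\dots,z_{j-1}$ the retained integrand is $z_j^{\,j-1}/(j-1)!$, and $\int_{d_{k_1-l+j}}^{z_{j+1}}\!z_j^{\,j-1}/(j-1)!\,dz_j = z_{j+1}^{\,j}/j! - d_{k_1-l+j}^{\,j}/j!$. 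The second summand pulls out the constant $-d_{k_1-l+j}^{\,j}/j!$ times the remaining $(l-j)$-fold integral over $(z_{j+1},\dots,z_l)$, which after relabelling $w_i=z_{j+i}$ is, again by Lemma~\ref{Lemma: jointGamma}, precisely $Q_{k_1-l+j}$. Iterating through $j=1,\dots,l-1$ and then performing the final integral $\int_{d_{k_1}}^{\infty}e^{-z_l}z_l^{\,l-1}/(l-1)!\,dz_l=\bar{F}_{\Gamma_l}(d_{k_1})$ yields the claimed recursion; the base case $Q_{k_1-1}=P(\Gamma_1\geq d_{k_1})=\bar{F}_{\Gamma_1}(d_{k_1})$ is immediate.

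For the joint survival probability the argument is identical except that one starts from the density $z_{k_0}^{\,k_0-1}/(k_0-1)!\,e^{-z_{k_1}}$ on $0<z_{k_0}<\dots<z_{k_1}$ (Lemma~\ref{Lemma: jointGamma} for general $k_0$) and peels $z_{k_0},z_{k_0+1},\dots,z_{k_1-1}$ in turn: integrating $z_{k_0}$ against $z_{k_0}^{\,k_0-1}/(k_0-1)!$ over $(d_{k_0},z_{k_0+1})$ produces $z_{k_0+1}^{\,k_0}/k_0!$ plus a term $-(d_{k_0}^{\,k_0}/k_0!)$ times an integral identified as $Q_{k_0}$, and the $j$-th step contributes $-(d_j^{\,j}/j!)Q_j$ while leaving $z_{j+1}^{\,j}/j!$; the last step gives $\int_{d_{k_1}}^{\infty}e^{-z_{k_1}}z_{k_1}^{\,k_1-1}/(k_1-1)!\,dz_{k_1}=\bar{F}_{\Gamma_{k_1}}(d_{k_1})$. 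Collecting terms gives $P\{\Gamma_k\geq d_k,\ k_0\leq k\leq k_1\}=\bar{F}_{\Gamma_{k_1}}(d_{k_1})-\sum_{j=k_0}^{k_1-1}(d_j^{\,j}/j!)Q_j$.

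The only genuinely delicate part is the index bookkeeping; the rest is elementary calculus. One must verify at each peeling step that the leftover sub-integral, after relabelling the variables, matches the definition of $Q_{k_1-l+j}$ (respectively $Q_j$) with the correctly shifted thresholds, and that the factorial weight advances from $1/(j-1)!$ to $1/j!$ exactly as in the stated coefficients. The nondecreasing hypothesis on $(d_j)$ is precisely what keeps every intermediate region in the clean nested-interval form, so that each one-dimensional integral has the elementary antiderivative used above.
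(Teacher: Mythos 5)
Your proof is correct, and the core of it—the inside-out peeling of the nested Gamma integrals obtained from Lemma \ref{Lemma: jointGamma}, with the nondecreasing thresholds guaranteeing the clean nested region—is exactly the paper's argument for the recursion (the paper carries it out for $Q_0$ and notes the general $Q_{k_1-l}$ follows in the same way, whereas you peel the general case directly; your bookkeeping of the factor $z_{j+1}^{j}/j!-d_{k_1-l+j}^{j}/j!$ and the identification of the leftover $(l-j)$-fold integral with $Q_{k_1-l+j}$ both check out). The one genuine divergence is the second identity for general $k_0$: the paper obtains it probabilistically, partitioning $\{\Gamma_k\geq d_k,\ k_0\leq k\leq k_1\}$ according to which $\Gamma_j$ with $j<k_0$ falls below its threshold and invoking Lemma \ref{Lemma: Ank} to get $P=Q_0+\sum_{j=1}^{k_0-1}\frac{d_j^j}{j!}Q_j$, while you instead start from the general-$k_0$ joint density $\frac{z_{k_0}^{k_0-1}}{(k_0-1)!}e^{-z_{k_1}}$ of Lemma \ref{Lemma: jointGamma} and peel again. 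Your route is self-contained and arguably cleaner since it never needs Lemma \ref{Lemma: Ank}; the paper's route reuses the already-computed $Q_0$ and the event decomposition that it needs anyway in the proof of Theorem \ref{Thm: CloseLinearCase}. Both yield the stated formula.
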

\begin{proof}
	\begin{align*}
		Q_0
		&=P\{\Gamma_k\geq d_k, 1\leq k\leq k_1\}\\
		&=\int_{d_{k_1}}^{+\infty}e^{-z_{k_1}}\int_{d_{{k_1}-1}}^{z_{{k_1}}}...\int_{d_{1}}^{z_{2}}dz_{1}...dz_{k_1-1}dz_{k_1}\\
		&=\int_{d_{k_1}}^{+\infty}e^{-z_{k_1}}\int_{d_{{k_1}-1}}^{z_{{k_1}}}...\int_{d_{2}}^{z_{3}}(z_2-d_1)dz_2...dz_{{k_1}-1}dz_{k_1}\\
		&=\int_{d_{k_1}}^{+\infty}e^{-z_{k_1}}\int_{d_{{k_1}-1}}^{z_{{k_1}}}...\int_{d_{2}}^{z_{3}}z_2dz_2...dz_{{k_1}-1}dz_{k_1}-d_1Q_1\\
		&=\int_{d_{k_1}}^{+\infty}e^{-z_{k_1}}\int_{d_{{k_1}-1}}^{z_{{k_1}}}...\int_{d_{k_0}}^{z_{k_0+1}}\frac{z_{k_0}^{k_0-1}}{(k_0-1)!}dz_{k_0}...dz_{{k_1}-1}dz_{k_1}-\sum_{j=1}^{{k_0}-1}\frac{d_{j}^j}{j!}Q_{j}\\
		&=\int_{d_{k_1}}^{+\infty}e^{-z_{k_1}}\frac{z_{k_1}^{{k_1}-1}}{({k_1}-1)!}dz_{k_1}-\sum_{j=1}^{{k_1}-1}\frac{d_{j}^j}{j!}Q_{j}\\
		&= \bar{F}_{\Gamma_{k_1}}(d_{k_1}) -\sum_{j=1}^{{k_1}-1}\frac{d_{j}^j}{j!}Q_{j}.
	\end{align*}
	The rest of recursive formulae can be derived similarly and from the last third equation. Also notice that $\{\Gamma_k\geq d_k, k_0\leq k\leq k_1\}=\{\Gamma_k\geq d_k, 1\leq k\leq k_1\}\bigcup \{\text{some }\Gamma_k < d_k, 1\leq k \leq k_0-1\text{ and }\Gamma_k\geq d_k, k_0\leq k\leq k_1\}$, by Lemma \ref{Lemma: Ank} we have
      \begin{align*}
             P\{\Gamma_k\geq d_{k}, k_0\leq k\leq {k_1}\}&=Q_0+\sum_{j=1}^{{k_0}-1}\frac{d_{j}^j}{j!}Q_{j}\\
             &=\bar{F}_{\Gamma_{k_1}}(d_{k_1}) -\sum_{j=k_0}^{{k_1}-1}\frac{d_{j}^j}{j!}Q_{j}.
      \end{align*}
\end{proof}

\begin{lemma}
\label{Lemma: Ank}
	Let $(d_1,...,d_{k_1})$ be a sequence of increasing numbers,  $A_{n,k}=\{\Gamma_k \leq d_k, \Gamma_{k+l} > d_{k+l}, 1\leq l\leq k_1-k\}$, $1\leq k \leq k_1-1$. Then
	\begin{align*}
		P(A_{n,k})
		&=\frac{d_{k}^{k}}{k!}Q_k.
	\end{align*}
\end{lemma}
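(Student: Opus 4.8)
The plan is to evaluate $P(A_{n,k})$ directly from the joint density of $(\Gamma_k,\Gamma_{k+1},\dots,\Gamma_{k_1})$ supplied by Lemma \ref{Lemma: jointGamma}, namely $\tfrac{z_k^{k-1}}{(k-1)!}e^{-z_{k_1}}$ on the ordered region $0<z_k<z_{k+1}<\dots<z_{k_1}$. First I would write
\[
P(A_{n,k})=\int_{\mathcal D}\frac{z_k^{k-1}}{(k-1)!}\,e^{-z_{k_1}}\,dz_k\,dz_{k+1}\cdots dz_{k_1},
\]
where $\mathcal D$ is the intersection of that ordered region with $\{z_k\le d_k\}$ and $\{z_{k+l}>d_{k+l},\ 1\le l\le k_1-k\}$ (these describing exactly the event $A_{n,k}$).

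The key step is the following observation: since $(d_j)$ is increasing, on $\mathcal D$ one has $z_k\le d_k\le d_{k+1}<z_{k+1}$, so the ordering constraint $z_k<z_{k+1}$ holds automatically and may be discarded. Consequently $\mathcal D$ is the Cartesian product $(0,d_k]\times\mathcal E$, with $\mathcal E=\{0<z_{k+1}<\dots<z_{k_1}\}\cap\{z_{k+l}>d_{k+l},\ 1\le l\le k_1-k\}$. Because the integrand factors as a function of $z_k$ times $e^{-z_{k_1}}$ (which involves only the remaining variables), Fubini's theorem splits the integral:
\[
P(A_{n,k})=\Big(\int_0^{d_k}\frac{z_k^{k-1}}{(k-1)!}\,dz_k\Big)\Big(\int_{\mathcal E}e^{-z_{k_1}}\,dz_{k+1}\cdots dz_{k_1}\Big)=\frac{d_k^{\,k}}{k!}\int_{\mathcal E}e^{-z_{k_1}}\,dz_{k+1}\cdots dz_{k_1}.
\]
Relabeling $y_l:=z_{k+l}$ for $1\le l\le k_1-k$, the remaining integral is $\int_{0<y_1<\dots<y_{k_1-k},\,y_l>d_{k+l}}e^{-y_{k_1-k}}\,dy_1\cdots dy_{k_1-k}$, which by Lemma \ref{Lemma: jointGamma} in the case $k_0=1$ (with $k_1$ replaced by $k_1-k$) is exactly $P\{\Gamma_l\ge d_{k+l},\ 1\le l\le k_1-k\}=Q_k(d_{k+1},\dots,d_{k_1})$ as defined in Lemma \ref{Lemma: GammaSurvivalRecursion}. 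Combining the two factors gives $P(A_{n,k})=\tfrac{d_k^{\,k}}{k!}Q_k$, as claimed.

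I expect the only genuinely subtle point — the ``hard part'' — to be recognizing in the second paragraph that the ordering constraint $z_k<z_{k+1}$ is redundant under the monotonicity of $(d_j)$; the rest is bookkeeping with Gamma joint densities. (Equivalently, one may condition on $\Gamma_k=s\le d_k$ and use that $(\Gamma_{k+l}-\Gamma_k)_{l\ge1}$ is an independent copy of $(\Gamma_l)_{l\ge1}$; after the change of variables $y_l'=y_l+s$ the same monotonicity makes the shift disappear, $\psi(s):=P\{\Gamma_{k+l}>d_{k+l}\mid \Gamma_k=s\}=e^{s}Q_k$ for $s\le d_k$, and $\int_0^{d_k}\tfrac{s^{k-1}}{(k-1)!}e^{-s}\cdot e^{s}Q_k\,ds=\tfrac{d_k^{\,k}}{k!}Q_k$.) I would also remark that in the intended application the $d_j$ are nonnegative, so that $z_k^{k-1}$ is integrated over a genuine interval $(0,d_k]$ and the lower bound $0<z_{k+1}$ is automatic.
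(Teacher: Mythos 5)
Your proof is correct and follows essentially the same route as the paper's: both write $P(A_{n,k})$ as an iterated integral against the joint density of $(\Gamma_k,\dots,\Gamma_{k_1})$ from Lemma \ref{Lemma: jointGamma}, use the monotonicity of $(d_j)$ to decouple the $z_k$-integral over $(0,d_k]$ (yielding $d_k^k/k!$), and identify the remaining integral as $Q_k$ via the $k_0=1$ case of the same lemma. Your explicit justification that the ordering constraint $z_k<z_{k+1}$ is redundant is exactly the step the paper leaves implicit when it writes the inner integral with upper limit $d_k$.
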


\begin{proof}
	\begin{align*}
		P({A_{n,k}})
		&=P\{\Gamma_k \leq d_k, \Gamma_{k+l} > d_{k+l} \text{ for all }1\leq l\leq k_1-k\}\\
		&=\int_{d_{k_1}}^{+\infty}e^{{-z_{k_1}}}\int_{d_{k_1-1}}^{z_{k_1}}...
		    \int_{d_{k+1}}^{z_{k+1}}\int_{0}^{d_{k}}\frac{z_{k}^{k-1}}{(k-1)!}dz_kdz_{k+1}...dz_{k_1-1}dz_{k_1}\\
		&=\int_{d_{k_1}}^{+\infty}e^{{-z_{k_1}}}\int_{d_{k_1-1}}^{z_{k_1}}...\int_{d_{k+1}}^{z_{k+1}}\frac{d_{k}^{k}}{k!}dz_{k+1}...dz_{k_1-1}dz_{k_1}\\
             &=\frac{d_{k}^{k}}{k!}P\{\Gamma_l \geq d_{k+l}, 1\leq l \leq k_1 -k\}\\
		&=\frac{d_{k}^{k}}{k!}Q_k.
	\end{align*} 
\end{proof}

%\section*{Acknowledgements}
%...
%And this is an acknowledgements section with a heading that was produced by the
%$\backslash$section* command. Thank you all for helping me writing this
%\LaTeX\ sample file. See \ref{suppA} for the supplementary material example.

%\begin{supplement}
%\sname{Supplement A}\label{suppA}
%\stitle{Title of the Supplement A}
%\slink[url]{http://www.e-publications.org/ims/support/dowload/imsart-ims.zip}
%\sdescription{Dum esset rex in
%accubitu suo, nardus mea dedit odorem suavitatis. Quoniam confortavit
%seras portarum tuarum, benedixit filiis tuis in te. Qui posuit fines tuos}
%\end{supplement}

\bibliography{allMyReferences}

\begin{thebibliography}{10}

\bibitem{Donoho2004}
D.~Donoho and J.~Jin, ``Higher criticism for detecting sparse heterogeneous
  mixtures,'' {\em The Annals of Statistics}, vol.~32, no.~3, pp.~962--994,
  2004.

\bibitem{Donoho2008}
D.~Donoho and J.~Jin, ``Higher criticism thresholding: Optimal feature
  selection when useful features are rare and weak,'' {\em Proceedings of the
  National Academy of Sciences of the United States of America}, vol.~105,
  pp.~14790--14795, Sep 30 2008.

\bibitem{donoho2014higher}
D.~Donoho and J.~Jin, ``Higher criticism for large-scale inference: especially
  for rare and weak effects,'' {\em Statistical Science}, vol.~30, no.~1,
  pp.~1--25 DOI: 10.1214/14--STS506, 2015.

\bibitem{jager2007goodness}
L.~Jager and J.~A. Wellner, ``Goodness-of-fit tests via phi-divergences,'' {\em
  The Annals of Statistics}, pp.~2018--2053, 2007.

\bibitem{noe1972calculation}
M.~No{\'e}, ``The calculation of distributions of two-sided kolmogorov-smirnov
  type statistics,'' {\em The Annals of Mathematical Statistics}, pp.~58--64,
  1972.

\bibitem{noe1968calculation}
M.~No{\'e} and G.~Vandewiele, ``The calculation of distributions of
  kolmogorov-smirnov type statistics including a table of significance points
  for a particular case,'' {\em The Annals of Mathematical Statistics},
  vol.~39, no.~1, pp.~233--241, 1968.

\bibitem{kotel1983computing}
V.~Kotel'Nikova and E.~Chmaladze, ``On computing the probability of an
  empirical process not crossing a curvilinear boundary,'' {\em Theory of
  Probability \& Its Applications}, vol.~27, no.~3, pp.~640--648, 1983.

\bibitem{shorack2009empirical}
G.~R. Shorack and J.~A. Wellner, {\em Empirical processes with applications to
  statistics}, vol.~59.
\newblock SIAM, 2009.

\bibitem{steck1969smirnov}
G.~Steck, ``The smirnov two sample tests as rank tests,'' {\em The Annals of
  Mathematical Statistics}, pp.~1449--1466, 1969.

\bibitem{breth1976recurrence}
M.~Breth, ``On a recurrence of steck,'' {\em Journal of Applied Probability},
  pp.~823--825, 1976.

\bibitem{ruben1976evaluation}
H.~Ruben, ``On the evaluation of steck's determinant for rectangle
  probabilities of uniform order statistics,'' {\em Communications in
  Statistics-Theory and Methods}, vol.~5, no.~6, pp.~535--543, 1976.

\bibitem{barnett2014analytical}
I.~J. Barnett and X.~Lin, ``Analytical p-value calculation for the higher
  criticism test in finite-d problems,'' {\em Biometrika}, vol.~101, no.~4,
  pp.~964--970, 2014.

\bibitem{moscovich2016exact}
A.~Moscovich, B.~Nadler, C.~Spiegelman, {\em et~al.}, ``On the exact berk-jones
  statistics and their $p$-value calculation,'' {\em Electronic Journal of
  Statistics}, vol.~10, no.~2, pp.~2329--2354, 2016.

\bibitem{Li2014higher}
J.~Li and D.~Siegmund, ``Higher criticism: p-values and criticism,'' {\em
  Annals of Statistics}, vol.~43, no.~3, pp.~1323--1350, 2015.

\bibitem{eicker1979asymptotic}
F.~Eicker, ``The asymptotic distribution of the suprema of the standardized
  empirical processes,'' {\em The Annals of Statistics}, pp.~116--138, 1979.

\bibitem{jaeschke1979asymptotic}
D.~Jaeschke, ``The asymptotic distribution of the supremum of the standardized
  empirical distribution function on subintervals,'' {\em The Annals of
  Statistics}, vol.~7, no.~1, pp.~108--115, 1979.

\bibitem{ingster1997some}
Y.~I. Ingster, ``Some problems of hypothesis testing leading to infinitely
  divisible distributions,'' {\em Mathematical Methods of Statistics}, vol.~6,
  no.~1, pp.~47--69, 1997.

\bibitem{ingster1998minimax}
Y.~I. Ingster, ``Minimax detection of a signal for $i^n$-balls,'' {\em
  Mathematical Methods of Statistics}, vol.~7, no.~4, pp.~401--428, 1998.

\bibitem{Tukey1976}
J.~Tukey, ``The higher criticism.'' Course Notes, Statistics 411, Princeton
  University., 1976.

\bibitem{Arias2010}
E.~Arias-Castro, E.~J. Cand{\`e}s, and Y.~Plan, ``Global testing under sparse
  alternatives: Anova, multiple comparisons and the higher criticism,'' {\em
  The Annals of Statistics}, vol.~39, no.~5, pp.~2533--2556, 2011.

\bibitem{shaostat}
J.~Shao, {\em Mathematical Statistics}.
\newblock Springer Verlag, 2010.

\bibitem{berk1979g}
R.~H. Berk and D.~H. Jones, ``Goodness-of-fit test statistics that dominate the
  kolmogorov statistics,'' {\em Probability Theory and Related Fields},
  vol.~47, no.~1, pp.~47--59, 1979.

\bibitem{Goldstein2009}
D.~B. Goldstein, ``Common genetic variation and human traits,'' {\em New
  England Journal of Medicine}, vol.~360, no.~17, pp.~1696--1698, 2009.

\bibitem{Price2006}
A.~L. Price, N.~J. Patterson, R.~M. Plenge, M.~E. Weinblatt, N.~A. Shadick, and
  D.~Reich, ``Principal components analysis corrects for stratification in
  genome-wide association studies,'' {\em Nature genetics}, vol.~38, no.~8,
  pp.~904--909, 2006.

\bibitem{mccullagh1989generalized}
P.~McCullagh and J.~A. Nelder, {\em Generalized linear models}, vol.~37.
\newblock CRC press, 1989.

\bibitem{schaid2002score}
D.~J. Schaid, C.~M. Rowland, D.~E. Tines, R.~M. Jacobson, and G.~A. Poland,
  ``Score tests for association between traits and haplotypes when linkage
  phase is ambiguous,'' {\em The American Journal of Human Genetics}, vol.~70,
  no.~2, pp.~425--434, 2002.

\bibitem{Duerr2006}
R.~Duerr, K.~Taylor, S.~Brant, J.~Rioux, M.~Silverberg, M.~Daly, A.~Steinhart,
  C.~Abraham, M.~Regueiro, A.~Griffiths, {\em et~al.}, ``A genome--wide
  association study identifies il23r as an inflammatory bowel disease gene,''
  {\em Science Signalling}, vol.~314, no.~5804, p.~1461, 2006.

\bibitem{hugot2001association}
J.-P. Hugot, M.~Chamaillard, H.~Zouali, S.~Lesage, J.-P. C{\'e}zard,
  J.~Belaiche, S.~Almer, C.~Tysk, C.~A. O'Morain, M.~Gassull, {\em et~al.},
  ``Association of nod2 leucine-rich repeat variants with susceptibility to
  crohn's disease,'' {\em Nature}, vol.~411, no.~6837, pp.~599--603, 2001.

\bibitem{ogura2001frameshift}
Y.~Ogura, D.~K. Bonen, N.~Inohara, D.~L. Nicolae, F.~F. Chen, R.~Ramos,
  H.~Britton, T.~Moran, R.~Karaliuskas, R.~H. Duerr, {\em et~al.}, ``A
  frameshift mutation in nod2 associated with susceptibility to crohn's
  disease,'' {\em Nature}, vol.~411, no.~6837, pp.~603--606, 2001.

\bibitem{maglott2011entrez}
D.~Maglott, J.~Ostell, K.~D. Pruitt, and T.~Tatusova, ``Entrez gene:
  gene-centered information at ncbi,'' {\em Nucleic acids research}, vol.~39,
  no.~suppl 1, pp.~D52--D57, 2011.

\bibitem{chamouard2006diagnostic}
P.~Chamouard, Z.~Richert, N.~Meyer, G.~Rahmi, and R.~Baumann, ``Diagnostic
  value of c-reactive protein for predicting activity level of crohns
  disease,'' {\em Clinical Gastroenterology and Hepatology}, vol.~4, no.~7,
  pp.~882--887, 2006.

\bibitem{trikudanathan2012diagnosis}
G.~Trikudanathan, P.~G. Venkatesh, and U.~Navaneethan, ``Diagnosis and
  therapeutic management of extra-intestinal manifestations of inflammatory
  bowel disease,'' {\em Drugs}, vol.~72, no.~18, pp.~2333--2349, 2012.

\bibitem{james2011andrew}
W.~D. James, T.~Berger, and D.~Elston, {\em Andrew's diseases of the skin:
  clinical dermatology}.
\newblock Elsevier Health Sciences, 2011.

\bibitem{yuvaraj2012coli}
S.~Yuvaraj, S.~H. Al-Lahham, R.~Somasundaram, P.~A. Figaroa, M.~P.
  Peppelenbosch, and N.~A. Bos, ``E. coli-produced bmp-2 as a chemopreventive
  strategy for colon cancer: a proof-of-concept study,'' {\em Gastroenterology
  research and practice}, vol.~2012, 2012.

\bibitem{slattery2012genetic}
M.~L. Slattery, A.~Lundgreen, J.~S. Herrick, S.~Kadlubar, B.~J. Caan, J.~D.
  Potter, and R.~K. Wolff, ``Genetic variation in bone morphogenetic protein
  and colon and rectal cancer,'' {\em International Journal of Cancer},
  vol.~130, no.~3, pp.~653--664, 2012.

\bibitem{david2003order}
H.~David and H.~Nagaraja, ``Order statistics,'' 2003.

\bibitem{mathai1991multivariate}
A.~M. Mathai and P.~G. Moschopoulos, ``On a multivariate gamma,'' {\em Journal
  of Multivariate Analysis}, vol.~39, no.~1, pp.~135--153, 1991.

\end{thebibliography}
%\bibliography{mybib}{}
\bibliographystyle{ieeetr}

\end{document}